\DeclareRobustCommand{\vect}[1]{\bm{#1}}
  \renewcommand{\vect}[1]{#1}%
\begin{document}

\title{Adaptive finite element approximation for steady-state Poisson-Nernst-Planck equations
%\thanks{Grants or other notes
%about the article that should go on the front page should be
%placed here. General acknowledgments should be placed at the end of the article.}
}
%\subtitle{Do you have a subtitle?\\ If so, write it here}

\titlerunning{Adaptive finite element method for PNP equations}        % if too long for running head

\author{Tingting Hao        \and
        Manman Ma \and
        Xuejun Xu %etc.
}

%\authorrunning{Short form of author list} % if too long for running head

\institute{Tingting Hao \at
              School of Mathematical Sciences, Tongji University, Shanghai 200092, China \\
%              Tel.: +123-45-678910\\
 %             Fax: +123-45-678910\\
              \email{1710386@tongji.edu.cn}           %  \\
%             \emph{Present address:} of F. Author  %  if needed
           \and
           Manman Ma \at
              School of Mathematical Sciences, Tongji University, Shanghai 200092, China\\
              \email{mamm@tongji.edu.cn}
            \and
            Xuejun Xu \at
            School of Mathematical Sciences, Tongji University, Shanghai 200092, China, and LSEC, Institute of Computational Mathematics and Scientific/Engineering Computing, Academy of Mathematics and System Sciences, Chinese Academy of Sciences, Beijing 100190, China\\
            \email{xxj@lsec.cc.ac.cn}            
}

\date{Received: date / Accepted: date}
% The correct dates will be entered by the editor

\maketitle

\begin{abstract}
In this paper, we develop an adaptive finite element method for the nonlinear steady-state Poisson-Nernst-Planck equations, where the spatial adaptivity for geometrical singularities and boundary layer effects are mainly considered. As a key contribution, the steady-state Poisson-Nernst-Planck equations are studied systematically and rigorous analysis for a residual-based a posteriori error {estimate} of the nonlinear system is presented. With the help of Schauder fixed point theorem, we show the solution existence and uniqueness of the linearized system derived by taking $G-$derivatives of the nonlinear system, followed by the proof of the relationship between the error of solution and the a posteriori error estimator $\eta$.
Numerical experiments are given to validate the efficiency of the a posteriori error estimator and demonstrate the expected rate of convergence. In the further tests, adaptive mesh refinements for geometrical singularities and boundary layer effects are successfully observed.
\keywords{Poisson-Nernst-Planck equations \and A posteriori error estimate \and  Adaptive finite element method \and Schauder fixed point theorem}
% \PACS{PACS code1 \and PACS code2 \and more}
\subclass{65N15 \and 65N30 \and 65J15 \and 35K61}
\end{abstract}

%%%%% Start %%%%%%
\section{Introduction}
In this work, we consider the steady-state Poisson-Nernst-Planck (PNP) equations
\begin{equation}\label{model}
\begin{cases}
\begin{array}{lll}
&-\nabla\cdot D^i(\nabla c^i+\beta z^iec^i \nabla\phi )=F^i,\ i=1,...,K, \\
&-\nabla\cdot(\varepsilon_r\varepsilon_0\nabla \phi) - \sum\limits_{i=1}^Kz^iec^i = F^f,
\end{array}
\end{cases}
\end{equation}
which describe the nonlinear coupling of the electric potential $\phi$ and the ionic concentration $c^i$ of the $i$-th species. The first equation is called the Nernst-Planck equation for species $i$ and the second one is the named the Poisson's equation.
Here, $D^i$ and $z^i$ are the diffusivity and valence, respectively. $\beta=1/(k_BT_{abs})$, with $k_B$ the Boltzmann constant and $T_{abs}$ the absolute temperature. $e$ is the elementary charge, $\varepsilon_r$ and $\varepsilon_0$ are the relative and vacuum dielectric permittivities, $F^i$ denotes the reaction source term, and $F^f$ is generally due to the fixed charges.

Despite the fact that the PNP system has been well known and widely studied for over a century because of its physical applications such as semiconductor studies~\cite{Brezzi2005,SGajewski,SMauri} and electrodiffusion problems \cite{electrodiffusion1,electrodiffusion2,electrodiffusion3}, it is attracting more attentions as being used to describe the dynamics of ion transport in biological membrane channels \cite{FELu,wei2012SIAM,xu2014MBMB,tu2015CPC}. Recently, modified models beyond
mean field theory such as incorporating ionic steric effects, correlation effects and inhomogeneous dielectric effects have been extensively studied with great interests~\cite{kilic2007bPRE,lu2011BJ,horng2012JPCB,xu2014PRE,Jiang2014JPCM}. For early theoretical analysis, Jerome  \cite {Jerome1} and Hayeck \cite{Hayeck1990EXISTENCE} proved  the existence of solutions  for steady-state PNP according to Schauder fixed point theorem, while Mock~\cite{mock2}, Brezzi~\cite{Brezzi}, and Gajewski~\cite{Gajewski} gave out the conclusion of uniqueness under some local constraints. Due to the strong coupling and high nonlinearity, the analytical or asymptotic solutions of PNP equations have  been only studied for simple 1-D~\cite{golovnev2011JCP} or single-ion-species cases~\cite{schonke2012JPA}.  More analytical results for steady-state PNP equations are referred to Refs.~\cite{barcilon1997SIAM,AGolovnev,SPLiu,singer2008EJAM}
%[Barcilon, Chen, Eisenberg,  Jerome, 1997, Golovnev  Trimper, 2010; Liu, 2005; Singer, Gillespie,Norbury,  Eisenberg, 2008]
where only 1-D cases are considered. As a result, the steady or unsteady PNP equations are in general solved numerically with regular computational domains.

A variety of numerical methods have been used extensively for solving PNP equations, which can be broadly categorized as using finite difference method \cite{FDBolintineanu,FDCardenas,FDZheng,liu2017free,Ding19,hu2020fully}, finite element method \cite{FEsungao,FEsunhem,FELu,FEsongzhang,FEsongshen,BFZhou,FEjerome,metti2016energetically}, finite volume method \cite{FVMathur,FVSrinivasan}, the boundary element method \cite{BFZhou},  and the spectrum method \cite{SPHollerbach}. Among those numerical approaches, finite element methods have been known to perform well for more irregular geometries and complicated boundaries. With the increasing computing capabilities and numerical improvements, the PNP model has been applied recently in simulating large systems such as practical biophysical system, nevertheless, the computational costs of classical methods with uniform meshes are still expensive. Additionally, the boundary layer effect due to thin Debye layer, the singularity of Dirac charge distribution sources, and geometrical singularities cause significant difficulties in obtaining required numerical accuracy when standard methods are used.
Although singular perturbation treatments have been applied to the PNP system for thin Debye layer limit~\cite{SPLiu}, very few efficient numerical analysis aiming at overcoming those singularities has been proposed to the best of our knowledge.
As a special equilibrium state of PNP equation, the nonlinear Poisson-Boltzmann (PB) equation with Dirac charge distribution has been systematically discussed and adaptive finite element approximations have been developed in previous works~\cite{holst2000jcp,PBchenlong}. Later on, a parallel adaptive finite element algorithm of the steady-state PNP has been developed with using an a posteriori error estimator which is similar to that in PB \cite{xie2013MBMB}.  However, PB has  reduced the strongly coupled system into a single equation and simplified the nonlinear difficulties of PNP system itself. Besides, the above mentioned singularities and boundary layer effects require more general adaptive numerical analysis for PNP equations.

The analysis of adaptive finite element methods has made important progress in understanding the basic principles in recent years.
Typical adaptive algorithms provide an automatic feedback routine with successive loops of the structure: {\it SOLVE $\rightarrow$ ESTIMATE $\rightarrow$ MARK $\rightarrow$ REFINE},
%An adaptive finite element method is generally divided into four steps: solution, estimation, marking and refinement,
of which `{\it ESTIMATE} ' is to find out where correction is needed through the a posteriori error estimator $\eta$, so as to prepare for grid-marking and mesh-refinement. The adaptive finite element method is able to drive local refinement by a posteriori error estimation and achieve the mesh adaptation, which can well resolve the geometrical singularities, boundary layer effects, and so on. Meanwhile,  the a posteriori error estimates determine the error control that depends on the numerical solutions only. {A} posteriori error estimation varies for different equations \cite{Carstensen,VerfurthR,Eriksson} and multiple a posteriori error estimates can exist even for the same problem  \cite{Oden}. In this work, the residual-based a posteriori error estimates are mainly considered for analyzing the PNP equations, however, the nonlinearity and strong coupling of the system lead to great difficulties in obtaining the estimates as following the classical {a} posteriori error estimation of the residual type \cite{Verfuhrtgeneral} where the a posteriori error estimates can be achieved by considering the original equation system directly.
Fortunately, a general solution for the a posteriori error estimates of nonlinear elliptic equations was developed by {Verf\"{u}rth}~\cite{Verfuhrt}  decades ago and was further discussed by Ainsworth and Oden in Ref.~\cite{Oden}.
%With referring to {Verf\"{u}rth}'s theory, the current work proposes the a posteriori error estimate of PNP equations and proves both its reliability and efficiency.
{Besides, there are many studies on the a posterior estimation of linear singular perturbation problems with considering the boundary layer effects}
\cite{ainsworth1999reliable,demlow2016maximum-norm,cheddadi2009guaranteed,ainsworth2011fully,zhang2015guaranteed}. However, to solve cases with nonlinear and strongly coupling problems is challenging. The current work proposes the a posteriori error estimate of PNP equations and proves both its reliability and efficiency.  {Furthermore, we demonstrate the robustness of the reliability which is independent of the mesh size and the parameter denoting the thin Debye layer.}

In {Verf\"{u}rth}'s theory, the resulted linear equation system  after taking $G-$ derivatives of the original nonlinear system has to satisfy some preconditions, the most stringent of which is the solution uniqueness. Noteworthily, we present certain constraints under which the solution uniqueness is proved and that was absent in the theoretical analysis in \cite{PBchenlong}. Since the canonical method for proving the solution existence of linear partial differential equations~\cite{Gilbarg} is impractical here due to the special structure of the PNP system,
% and hence the linearized equations by G derivative~\cite{Zeidler} in this work.
we instead construct operators similarly as developed in Ref.~\cite {Jerome1} to decouple the nonlinear equation system, and give the  solution existence of the system by utilizing the Schauder fixed point theorem~\cite{Zeidler} and Poisson equation properties~\cite{Gilbarg}.

The paper is organized as follows. We begin in Section \ref{sec2} by defining the PNP system, its variational form, and the finite element spaces to be used. In Section \ref{sec:posterr}, we prove the solution existence and uniqueness of linearized equations with details presented in \ref{app:existanduniq}, and develop the a posteriori error estimate with demonstrating its reliability and efficiency.
%In Section 4, we prove the existence and uniqueness of solutions to linearized equations.
In Section \ref{numerics}, numerical experiments are reported to show the accuracy of the adaptive method and agreements to theorems of the a posteriori error estimation. Conclusion remarks are finally made in Section \ref{conclusions}.

\section{Preliminary} \label{sec2}
Let $\Omega\subset R^2$ be a bounded Lipschitz domain. We use standard notations for Sobolev spaces %$W^{m,p}(\Omega)$
$W^{1,2}(\Omega)=H^1(\Omega)$,
and the associated norms and seminorms.
%For $p=2$, $W^{1,2}(\Omega)=H^1(\Omega)$,
Here, $H_0^1 (\Omega) =\{v\in H^1(\Omega): v|_{\partial \Omega} =0\}$ and $H^{-1}(\Omega)=(H_0^1(\Omega))^*$ which means the dual space of $H_0^1(\Omega)$.
%$(H^1(\Omega))'$ is the dual space of $H^1(\Omega)$.
%$H^{-1}(\Omega)=(H_0^1(\Omega))'$.
In all proofs,
%$C$ denotes a generic positive constant and may stand for different values when appear in different places. Meanwhile,
$a\lesssim b$ means $a\leq C b$ where $C$ is a constant.

\subsection{PNP system and its variational form}
For simplicity, the 1:1 symmetric system is considered in this work, with $c^1=p$ and $c^2=n$ denoting the positive and negative ionic concentration, respectively. The general steady-state PNP is then reduced to the dimensionless form as follows:
\begin{equation} \label{model2}
\begin{cases}
\begin{array}{l}
-\Delta p-\nabla\cdot(p\nabla\psi)=f_1, \\
-\Delta n+\nabla\cdot(n\nabla\psi)=f_2, \\
-\epsilon\Delta \psi=p-n+f_3,
\end{array}
\end{cases}
\end{equation}
here the parameter $\epsilon=2(\ell_D/l_0)^2$ with $\ell_D$ the Debye length and $l_0$ the characteristic length of the computational domain. {In this work, we take $\epsilon\in (0,1]$ where small $\epsilon$ values correspond to cases with boundary layer effects, or namely thin Debye layer effects.}
	
%However, $\epsilon$ is always set to be unity in the following analysis for simplicity while the small-$\epsilon$ case is saved as our future work.
% considered in numerical experiments given in Section~\ref{numerics}.

Alternatively, the system \eqref{model2} can be written as a general system $\bm{F}(\bm{u}) = \bm{0}$ with $\bm{u}(\bm{x}) = (p(\bm{x}),n(\bm{x}),\psi(\bm{x}))$, and is satisfied by {$p(\bm{x}),n(\bm{x})$ and $\psi(\bm{x}) \in H^1({\Omega}) $ on the two dimensional domain considered in current work, i.e., $\bm{x}=(x,y)\in \Omega$. Let $\bm{f}(\bm{x})=(f_1,f_2,f_3)\in (L^{2}(\Omega))^3$} and the system satisfy Dirichlet  boundary condition. Hence, the weak formulation of system \eqref{model2} is,% $\forall {\bm{v} = (v_1, v_2, v_3)}\in (H_0^1(\Omega ))^3$,
\begin{equation}
\int_{\Omega}
(v_1, v_2, v_3)
\left(                 %
\begin{array}{c}
 -\Delta p-\nabla\cdot(p\nabla\psi)\\
 -\Delta n+\nabla\cdot(n\nabla\psi)\\
 -\epsilon\Delta \psi-p+n \\
 \end{array}
\right) d\bm{x}
=
\int_{\Omega}
(v_1, v_2, v_3)
\left(                 %
\begin{array}{c}
 f_1\\
f_2\\
f_3 \\
 \end{array}
\right) d\bm{x},
\end{equation}
then, ${\bm{u}}=(p,n,\psi)$ is the weak solution of \eqref{model2} if and only if $\forall {\bm{v}}= (v_1, v_2, v_3)\in (H_0^1(\Omega))^3$, ${\bm{u}}$ satisfies,
%\begin{align}
%<{\bm{F}}({\bm{u}}),{\bm{v}}>= &\int_{\Omega}(\nabla p\cdot\nabla v_1+p\nabla\psi\cdot\nabla v_1+\nabla n \cdot\nabla v_2-n\nabla\psi\cdot \nabla v_2+\nabla\psi\cdot \nabla v_3 \label{vmodel2}\\
%& -pv_3+nv_3-f_1v_1-f_2v_2-f_3v_3)d\bm{x} \nonumber\\
%= & 0.	\nonumber%
%\end{align}
\begin{align}
<{\bm{F}}({\bm{u}}),{\bm{v}}>= &\int_{\Omega}(\nabla p\cdot\nabla v_1+p\nabla\psi\cdot\nabla v_1+\nabla n \cdot\nabla v_2-n\nabla\psi\cdot \nabla v_2\nonumber \\
&\quad+{\epsilon}\nabla\psi\cdot \nabla v_3-pv_3+nv_3-f_1v_1-f_2v_2-f_3v_3)d\bm{x} \nonumber\\
= & 0.	\label{vmodel2}
\end{align}
%Hence, the weak {formulation} of the system \eqref{model2} is
%to find ${\bm{u}}=(p,n,\psi) \in { (H^1({\Omega}) )^3}$, such that
%\begin{equation}%\label{weakequation}
%<{\bm F}({\bm{u}}),{\bm{v}}>=0,\quad \forall {\bm{v}} \in (H_0^1(\Omega))^3,
%\end{equation}
%{where ${\bm F}$ is a nonlinear operator from ${(H^1({\Omega}) )^3}$ to $(H^{-1}(\Omega))^3.$}
Note that ${\bm F}$ is a nonlinear operator here from ${(H^1({\Omega}) )^3}$ to $(H^{-1}(\Omega))^3.$

{Next, we} propose the linear problem by introducing the $G$-derivative ($G\hat{a}teaux$ derivative~\cite{Zeidler}) operator ${\bm {DF}}(\cdot)$ at ${\bm{u}}$ as follows,
\begin{equation}\label{definition}
<{\bm {DF}}({\bm{u}}){\bm{\phi}},{\bm{w}}> = \lim\limits_{\widetilde{\varepsilon}\rightarrow 0}\frac{<{\bm F}({\bm{u}}+\widetilde{\varepsilon}{\bm{\phi}})-{\bm F}({\bm{u}}),{\bm{w}}>}{\widetilde{\varepsilon}},\ \forall {\bm{\phi}}, \bm{w}\in (H_0^1(\Omega))^3, 
\end{equation}
which leads to the following linear problem: to find ${\bm{\phi}}=(\phi_1,\phi_2,\phi_3)\in (H_0^1(\Omega))^3$, $\forall {\bm{R}}\in (L^2(\Omega))^3$,  such that,
\begin{equation}\label{modelDF}
<{\bm {DF}}({\bm{u}}){\bm{\phi}},{\bm{v}}> = <{\bm{R}},{\bm{v}}>,\quad  \forall {\bm{v}}\in  (H_0^1(\Omega))^3,
\end{equation}
where ${\bm{u}}$ satisfies Eq. \eqref{vmodel2}.%weakequation}.

\subsection{Finite element discretization}
Let the quasi-uniform mesh of $\Omega$ be $\mathcal{T}_h$ with mesh size $0<h<1$ \cite{Brenner} and the corresponding finite element space be $V_h(\Omega)=\{ v\in H_0^1(\Omega):v|_{\tau}\in \mathcal{P}_1(\tau),\forall \tau\in \mathcal{T}_h  \}$. We  define ${\bm{u}_h}:=(p_h,n_h,\psi_h)$ with
\begin{align*}
&p_h\in P_h= \{v\in H^1(\Omega):v|_{T}\in \mathcal{P}_1(T),\forall T\in \mathcal{T}_h,v|_{\partial \Omega}=\bar{p }\}, \\
&n_h\in N_h=\{v\in H^1(\Omega):v|_{T}\in \mathcal{P}_1(T),\forall T\in \mathcal{T}_h,v|_{\partial \Omega}=\bar{n }\}, \\
&\psi_h\in PSI_h=\{v\in H^1(\Omega):v|_{T}\in \mathcal{P}_1(T),\forall T\in \mathcal{T}_h,v|_{\partial \Omega}=\bar{\psi }\},
\end{align*}
here, $\bar{p}, \bar{n}$ and $\bar{\psi} \in H^1(\Omega)$ satisfy, $\Gamma_0 \bar{p} = {p}, \Gamma_0 \bar{n} = {n}$ and $\Gamma_0 \bar{\psi} = \psi$, on $\partial \Omega $ with $\Gamma_0$ the trace operator. The finite element approximation of \eqref{vmodel2} is {to find} ${\bm{u}_h} \in P_h \times N_h \times PSI_h$ satisfying,
\begin{equation}\label{weakequation-1}
<{\bm F}({\bm{u}_h}),{\bm{v}_h}>=0, \ \forall {\bm{v}_h}=(v_{h,1},v_{h,2},v_{h,3})\in {(V_h)^3},
\end{equation}
which based on \eqref{vmodel2} gives
\begin{align}
	<{\bm F}({\bm{u}_h}),{\bm{v}_h}>
=&\int_{\Omega}(\nabla p_h\cdot\nabla v_{h,1}+p_h\nabla\psi_h\cdot\nabla v_{h,1}+\nabla n_h \cdot\nabla v_{h,2}-n_h\nabla\psi_h\cdot \nabla v_{h,2}  \nonumber\\
+\epsilon\nabla &\psi_h\cdot \nabla v_{h,3}-p_hv_{h,3}+n_hv_{h,3}-f_1v_{h,1}-f_2v_{h,2}-f_3v_{h,3})d{\bm x}. 
\end{align}

\section{The construction of {an} a posteriori error estimator} \label{sec:posterr}
{The error in this work is measured in an $\epsilon$-dependent norm $||\cdot||_{\epsilon,\Omega}$, i.e.,
$\forall \phi\in H^1(\Omega)$,
\begin{equation*}
||\phi||_{\epsilon,\Omega}=\left(\|\phi\|_{L^2(\Omega)}^2+\epsilon|\phi|_{1,\Omega}^2\right)^{{1}/{2}},
\end{equation*}
and
\begin{equation*}
||{\bm u}||_{\epsilon,\Omega}=\left(\|p\|_{\epsilon,\Omega}^2+\|n\|_{\epsilon,\Omega}^2+\|\psi\|_{\epsilon,\Omega}^2\right)^{{1}/{2}}.
\end{equation*}
The similar treatment is referred to \cite{verfurth1998robust}.}
%The adaptive finite element algorithm is divided into four steps: solve$\rightarrow $estimation $\rightarrow $mark $\rightarrow $ refine.
%The so-called estimation is the process of establishing and analyzing the posteriori error estimator through the posteriori error estimator $\eta$ to find the error which needs to be corrected, so as to prepare for the marking and refinement. The core of posterior error estimation is to establish the following relation between the posterior error estimator $\eta$ and the error $e=u-{\bm{u}_h}$:
We further define %$||f||_{-\epsilon,\Omega}$ as following.
\begin{equation*}
{||f||_{-\epsilon,\Omega}=\sup\limits_{v\in H_0^1(\Omega)}\frac{|<f,v>|}{\|v\|_{\epsilon,\Omega}}},\quad  {||\bm{F}||_{-\epsilon,\Omega}=\sup\limits_{\bm{v}\in (H_0^1(\Omega))^3}\frac{|<\bm{F},\bm{v}>|}{\|\bm{v}\|_{\epsilon,\Omega}}},
\end{equation*}
$\forall f\in H^{-1}(\Omega)$ and $\forall \bm{F}\in ((H_0^{1}(\Omega))^3)^*$ where $((H_0^{1}(\Omega))^3)^*$ is the dual space of $(H_0^{1}(\Omega))^3$.

The key part of the process `{\it ESTIMATE} ' in adaptive finite element algorithm is to find an a posteriori error estimator $\eta$ and establish the relationship between $\eta$ and the absolute error of solution, ${\bm{e}}={{\bm{u}}-{\bm{u}_h}}:=(e_1,e_2,e_3)\in { {(H_0^1(\Omega) )^3}}$, as following:
%\begin{equation}	\label{eq:eta_e}
%	\underline{C}\eta\leq \|{\bm{e}}\|_{1,\Omega}\leq \overline{C}\eta,
%\end{equation}
\begin{equation}	\label{eq:eta_e}
	\underline{C}\eta\leq {\|{\bm{e}}\|_{\epsilon,\Omega}}\leq \overline{C}\eta,
\end{equation}
with the constants $\underline{C}$ and $\overline{C}$ independent of $\bm{u}_h$ and $\bm{f}$. Here, the lower and upper bounds are called the efficiency and reliability of the a  posteriori error estimator, respectively.

%In the following, we set $||\cdot||_1 = ||\cdot||_{1,\Omega}$, $||\cdot||_{-1,\Omega} = ||\cdot||_{-1,\Omega}$, $||\cdot||_{L^2}= ||\cdot||_{L^2(\Omega)}$,  $||\cdot||_{L^\infty}= ||\cdot||_{L^\infty(\Omega)}$ for simplicity. \textcolor{red}{ (need modify all following related forms)}\textcolor{blue}{considering the proof of the existence and uniqueness and the usual notation, I think the notation with $\Omega$ is better in the whole paper. Here,I have unified the notations. }

In order to prove inequality \eqref{eq:eta_e}, we follow the strategies for the {a} posteriori error estimation of nonlinear equations in Ref. \cite{Verfuhrtgeneral} and split the process into two steps, which are to determine the relationship between ${\bm F}({\bm{u}_h})$ and $\bm{e}$ in Sec. \ref{sec:Fande} %inequality \eqref{basic1}
and the relationship between ${\bm F}({\bm{u}_h})$ and the estimator $\eta$ in Sec. \ref{sec:Fandeta}. {Furthermore, we show that $\overline{C}$ does not depend on $\epsilon$.}

%inequality \eqref{eq:etaFeta}.
%In this section, the posteriori error estimation method for nonlinear equations is divided into two steps  based on\cite{Verfuhrtgeneral}.  For the fist step,
%the relationship between $F({\bm{u}_h})$ and error $e$ is established. And in the second part, the relationship between $F({\bm{u}_h})$ and the posteriori error estimator $\eta$ is built.

\subsection{Relationship between ${\vect{F}}({\vect{u}}_h)$ and $\vect{e}$} \label{sec:Fande}
{
Before illustrating the relationship, we show the solution existence and uniqueness of the linear problem \eqref{modelDF}
%by introducing Lemmas \ref{lemma:poisson}-\ref{regularity1},
where the operator construction in Ref. \cite {Jerome1} and the proof of uniqueness in Ref. \cite{Gajewski} are utilized as references.

\begin{lemma}\label{unique1}
If $p$, $n \in L^{\infty}(\Omega)$, $\psi\in W^{1,\infty}(\Omega)$, and
\begin{equation}
	\|\nabla\psi\|_{L^{\infty}(\Omega)}+\sqrt{2}C_p {\epsilon^{-1}}\max\{ \|p\|_{L^{\infty}(\Omega)}, \|n\|_{L^{\infty}(\Omega)}\}< \sqrt{2}/(1+C_p^2),
\end{equation}
there exists a unique solution of  Eq. \eqref{modelDF}. Here, the constant $C_p$ depends on $\Omega$ only.
\end{lemma}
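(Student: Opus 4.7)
First I would compute the $G$-derivative explicitly from the definition \eqref{definition}, which produces the bilinear form
\begin{equation*}
<\bm{DF}(\bm{u})\bm{\phi},\bm{v}>=\int_\Omega \Bigl[\nabla\phi_1\cdot\nabla v_1+\phi_1\nabla\psi\cdot\nabla v_1+p\nabla\phi_3\cdot\nabla v_1+\nabla\phi_2\cdot\nabla v_2-\phi_2\nabla\psi\cdot\nabla v_2-n\nabla\phi_3\cdot\nabla v_2+\epsilon\nabla\phi_3\cdot\nabla v_3-\phi_1 v_3+\phi_2 v_3\Bigr]d\bm{x}.
\end{equation*}
This exhibits clearly the three structural couplings: a convective term $\phi_i\nabla\psi$ driven by the background potential, a concentration-weighted gradient term $p\nabla\phi_3$ and $n\nabla\phi_3$ in the first two equations, and a zeroth-order coupling $\pm\phi_i v_3$ in the Poisson block.

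For existence, since the full bilinear form is not obviously coercive on $(H_0^1(\Omega))^3$, the plan is to decouple the system in the spirit of \cite{Jerome1}. Given any $\tilde\phi_3\in H_0^1(\Omega)$, I would first solve for $\phi_1,\phi_2\in H_0^1(\Omega)$ two independent convection-diffusion problems whose bilinear forms $\int\nabla\phi_i\cdot\nabla v_i\pm\int\phi_i\nabla\psi\cdot\nabla v_i$ are coercive provided $C_p\|\nabla\psi\|_{L^\infty}<1$, a condition implied by the hypothesis of the lemma; Lax--Milgram then yields unique $\phi_1,\phi_2$ together with quantitative bounds linear in $\|\nabla\tilde\phi_3\|_{L^2}$. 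Next I would solve the Poisson equation $-\epsilon\Delta\phi_3'=\phi_1-\phi_2+R_3$ with zero boundary data, which is uniquely solvable by classical theory \cite{Gilbarg}. This defines a map $T:\tilde\phi_3\mapsto\phi_3'$ from $H_0^1(\Omega)$ into itself, and a fixed point of $T$ is a solution of \eqref{modelDF}. I would then verify, using the a priori bounds above, that $T$ maps a suitable closed ball into itself; a fixed point is obtained either through the Schauder theorem (via compactness of the embedding $H^1\hookrightarrow L^2$) as in the preceding discussion, or more directly as a Banach contraction once the smallness hypothesis is invoked.

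For uniqueness, I would test the homogeneous equation ($\bm R=0$) with $\bm v=\bm\phi$, obtaining
\begin{equation*}
\|\nabla\phi_1\|_{L^2}^2+\|\nabla\phi_2\|_{L^2}^2+\epsilon\|\nabla\phi_3\|_{L^2}^2 = -\int_\Omega\phi_1\nabla\psi\cdot\nabla\phi_1+\int_\Omega\phi_2\nabla\psi\cdot\nabla\phi_2-\int_\Omega p\nabla\phi_3\cdot\nabla\phi_1+\int_\Omega n\nabla\phi_3\cdot\nabla\phi_2+\int_\Omega\phi_1\phi_3-\int_\Omega\phi_2\phi_3.
\end{equation*}
Each right-hand side term is estimated by H\"older's inequality, followed by the Poincar\'e inequality $\|\phi_i\|_{L^2}\le C_p\|\nabla\phi_i\|_{L^2}$. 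The first pair of terms contributes $C_p\|\nabla\psi\|_{L^\infty}(\|\nabla\phi_1\|^2+\|\nabla\phi_2\|^2)$; the coupling terms with $p,n$ yield contributions proportional to $M\|\nabla\phi_3\|(\|\nabla\phi_1\|+\|\nabla\phi_2\|)$ with $M=\max\{\|p\|_\infty,\|n\|_\infty\}$, and the $\phi_i\phi_3$ terms produce a $C_p^2$ analogue. A weighted Young's inequality, carefully chosen to balance the concentration cross terms against the weak $\epsilon\|\nabla\phi_3\|^2$ energy, reduces the inequality to a single quadratic form in $(\sqrt{\|\nabla\phi_1\|^2+\|\nabla\phi_2\|^2},\sqrt{\epsilon}\|\nabla\phi_3\|)$ whose positive-definiteness is equivalent to the stated smallness hypothesis. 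This forces $\bm\phi=\bm 0$ and establishes uniqueness, which combined with the existence step completes the proof.

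The principal obstacle is the asymmetry: the coupling terms $p\nabla\phi_3\cdot\nabla v_1$ and $\phi_1 v_3$ are not of the same order, and the Poisson diffusion is weighted by the small parameter $\epsilon$. Consequently a naive Young splitting loses the $c=\|\nabla\phi_3\|$ contribution entirely; the correct splitting must produce an $\epsilon$ in the denominator, which is precisely the reason the hypothesis carries the factor $\epsilon^{-1}$ in front of $M$.
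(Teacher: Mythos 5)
Your existence step is essentially the paper's argument: decouple the system, solve the reduced problems, and apply Schauder via the compact embedding $H_0^1\hookrightarrow L^2$. The only cosmetic difference is that you take the fixed point in $\tilde\phi_3$ and keep the convective term $\phi_1\nabla\psi\cdot\nabla v_1$ implicit (handled by Lax--Milgram under the smallness of $\|\nabla\psi\|_{L^\infty}$), whereas the paper takes the fixed point in the pair $(\tilde\phi_1,\tilde\phi_2)$ and lags the convective term so that each reduced problem is a plain Poisson problem. Both routes close; yours requires checking coercivity once, the paper's pushes all the smallness into the invariance of the set $K$.

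The uniqueness step, however, has a genuine gap. You test the homogeneous system with the full difference $\bm\phi$, which brings the zeroth-order coupling $\int_\Omega(\phi_2-\phi_1)\phi_3\,d\bm{x}$ onto the right-hand side. Its only counterweight on the left is the $\epsilon$-weighted energy $\epsilon\|\nabla\phi_3\|_{L^2}^2$. Writing $a=(\|\nabla\phi_1\|^2+\|\nabla\phi_2\|^2)^{1/2}$ and $b=\sqrt{\epsilon}\|\nabla\phi_3\|$, this term contributes an off-diagonal entry of order $C_p^2\epsilon^{-1/2}$ to your $2\times2$ quadratic form (on top of the $M\epsilon^{-1/2}$ contribution from the $p,n$ cross terms), and since the diagonal entries are at most $1$, positive-definiteness of the determinant forces $C_p^4\lesssim\epsilon$ --- a constraint that does not vanish as $M=\max\{\|p\|_\infty,\|n\|_\infty\}\to0$ and is not implied by the lemma's hypothesis, which permits $\epsilon\to0$ provided $M/\epsilon$ stays small. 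So your claim that positive-definiteness is ``equivalent to the stated smallness hypothesis'' fails. The paper sidesteps this by testing only with $(\Phi_1,\Phi_2,0)$: the choice $v_3=0$ annihilates the $\pm\phi_i v_3$ terms entirely, and the remaining coupling $\langle\nabla\Phi_3,\,p\nabla\Phi_1-n\nabla\Phi_2\rangle$ is controlled by first using the third equation to get $\|\nabla\Phi_3\|_{L^2}\leq C_p\epsilon^{-1}\|\Phi_1-\Phi_2\|_{L^2}$, which is precisely what produces the linear factor $\epsilon^{-1}M$ appearing in the hypothesis. You should adopt that test function; as written, your energy identity cannot be closed under the stated condition.
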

\begin{proof}
	See \ref{app:exist}.%\textcolor{red}{ (we may move the proof to main section after modification)}
\end{proof}

%\begin{lemma}\label{unique2}
%If $\psi\in W^{1,\infty}(\Omega)$, and $\frac{1}{2}\|\nabla\psi\|_{L^{\infty}(\Omega)}+C^* \max\{ \|p\|_{L^{\infty}(\Omega)}, \|n\|_{L^{\infty}(\Omega)}\}< 1$, where $C^*$ is the regularity constant of the $Poisson$ equation and independent of the solution, then Eq. \eqref{modelDF} has a unique solution.
%\end{lemma}
%\begin{proof}
%	See \ref{app:unique}. \textcolor{red}{ (we may move the proof to main section after modification)}
%\end{proof}

{
\begin{lemma}\label{regularity1}
%For Eq.~\eqref{weakequation},
{If $p$, $n$, and $\psi$ satisfy the condition in Lemma \ref{unique1},} we have the regularity conclusion for Eq.~\eqref{modelDF} as follows:
  %\begin{equation}\label{regularity}
% C_1\|{\bm{R}}\|_{-1,\Omega}\leq \|{\bm{\phi}}\|_{1,\Omega}\leq C_2\|{\bm{R}}\|_{-1,\Omega},\quad \forall {\bm{R}}\in (L^2(\Omega))^3,
%  \end{equation}
 \begin{equation}\label{regularity}
 \epsilon C_1\|{\bm{R}}\|_{{-\epsilon},\Omega}\leq \|{\bm{\phi}}\|_{\epsilon,\Omega}\leq C_2\|{\bm{R}}\|_{{-\epsilon},\Omega},\quad \forall {\bm{R}}\in (L^2(\Omega))^3,
  \end{equation}
where the constants $C_1$ and $C_2$ are independent of ${\bm{\phi}}$, ${\bm{R}}$ and $\epsilon$.% Here, $\|{\bm{R}}\|_{-1,\Omega}=\sup_{\bm{v}\in H^1_0(\Omega)} \left(<{\bm{R}},\bm{v}>/\|{\bm{v}}\|_{\epsilon,\Omega}\right)$.
\end{lemma}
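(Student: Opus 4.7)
The plan is to establish the two sides of \eqref{regularity} separately. The left inequality expresses the continuity of the linearized operator $\bm{DF}(\bm{u})$ mapping $(H_0^1(\Omega))^3$ with the $\|\cdot\|_{\epsilon,\Omega}$ norm into its dual with the $\|\cdot\|_{-\epsilon,\Omega}$ norm, while the right inequality expresses its coercivity under the smallness hypothesis inherited from Lemma \ref{unique1}. The former is a bookkeeping exercise; the latter is where the hypothesis enters, and where the $\epsilon$-robustness of $C_2$ is delicate.

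For the lower bound $\epsilon C_1\|\bm{R}\|_{-\epsilon,\Omega}\leq \|\bm{\phi}\|_{\epsilon,\Omega}$, I would first compute $\langle \bm{DF}(\bm{u})\bm{\phi},\bm{v}\rangle$ explicitly from \eqref{definition} applied to \eqref{vmodel2}. This yields nine bilinear terms of types $\int\nabla\phi_i\cdot\nabla v_j$, $\int\phi_i\nabla\psi\cdot\nabla v_j$, $\int p\nabla\phi_3\cdot\nabla v_1$, $\int n\nabla\phi_3\cdot\nabla v_2$, and the zero-order coupling $\int(\phi_2-\phi_1)v_3$. Each is bounded by H\"older together with $p,n\in L^\infty(\Omega)$ and $\psi\in W^{1,\infty}(\Omega)$. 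Using $|\phi|_{H^1}\leq \epsilon^{-1/2}\|\phi\|_{\epsilon,\Omega}$ (and the same bound for $\bm{v}$) converts each term into a product of $\epsilon$-norms; the worst $\epsilon$-dependence comes from $\int\nabla\phi_i\cdot\nabla v_i$ with $i=1,2$, producing an $\epsilon^{-1}$ factor. Taking the supremum over $\bm{v}$ and using $\langle \bm{R},\bm{v}\rangle=\langle \bm{DF}(\bm{u})\bm{\phi},\bm{v}\rangle$ gives $\|\bm{R}\|_{-\epsilon,\Omega}\leq C\epsilon^{-1}\|\bm{\phi}\|_{\epsilon,\Omega}$, which is the claimed lower bound with $C_1=1/C$ independent of $\epsilon$.

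For the upper bound, I would choose $\bm{v}=\bm{\phi}$ in \eqref{modelDF}. The principal part is $|\phi_1|_{H^1}^2+|\phi_2|_{H^1}^2+\epsilon|\phi_3|_{H^1}^2$ and the cross terms $\int\phi_1\nabla\psi\cdot\nabla\phi_1$, $-\int\phi_2\nabla\psi\cdot\nabla\phi_2$, $\int p\nabla\phi_3\cdot\nabla\phi_1$, $-\int n\nabla\phi_3\cdot\nabla\phi_2$, $\int(\phi_2-\phi_1)\phi_3$ are to be estimated by Cauchy--Schwarz and weighted Young inequalities. The hypothesis $\|\nabla\psi\|_{L^\infty}+\sqrt{2}C_p\epsilon^{-1}\max\{\|p\|_{L^\infty},\|n\|_{L^\infty}\}<\sqrt{2}/(1+C_p^2)$ is precisely the quantitative input that allows absorbing these cross terms into the principal part while keeping a strictly positive residual. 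A Poincar\'e step then converts the surviving $|\phi_i|_{H^1}^2$ into $\|\phi_i\|_{\epsilon,\Omega}^2$, and the right-hand side $\langle \bm{R},\bm{\phi}\rangle\leq \|\bm{R}\|_{-\epsilon,\Omega}\|\bm{\phi}\|_{\epsilon,\Omega}$ delivers $\|\bm{\phi}\|_{\epsilon,\Omega}\leq C_2\|\bm{R}\|_{-\epsilon,\Omega}$.

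The main obstacle is producing the upper bound with $C_2$ independent of $\epsilon$, which is the robustness in the thin Debye layer parameter that the authors advertise. The difficulty is that the $\epsilon$-norm of $\phi_3$ contains a full $L^2$ piece but only an $\epsilon$-weighted seminorm, so recovering $\|\phi_3\|_{L^2}^2$ on the left-hand side without picking up an $\epsilon^{-1}$ factor is not automatic from the pure Poisson dissipation $\epsilon|\phi_3|_{H^1}^2$. The key will be to use the zero-order coupling $-\int\phi_1\phi_3+\int\phi_2\phi_3$ in an essential way: it transfers $L^2$ control between $\phi_3$ and $\phi_1-\phi_2$, so combined with the Nernst--Planck dissipation and the $\epsilon^{-1}$-scaled smallness condition on $\max\{\|p\|_{L^\infty},\|n\|_{L^\infty}\}$, it is exactly this transfer that should close the estimate uniformly in $\epsilon$.
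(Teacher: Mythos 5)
Your treatment of the lower bound $\epsilon C_1\|\bm{R}\|_{-\epsilon,\Omega}\leq\|\bm{\phi}\|_{\epsilon,\Omega}$ coincides with the paper's: expand $\langle\bm{DF}(\bm{u})\bm{\phi},\bm{v}\rangle$ into its bilinear terms, bound each by H\"older using $p,n\in L^{\infty}(\Omega)$ and $\psi\in W^{1,\infty}(\Omega)$, convert to $\epsilon$-norms at the cost of a factor $\epsilon^{-1}$, and take the supremum over $\bm{v}$. That half is fine.

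The upper bound is where there is a genuine gap. The coercivity route you sketch does not close with a constant independent of $\epsilon$, and the mechanism you hope will rescue it does not exist. Testing with $\bm{v}=\bm{\phi}$ produces the principal part $|\phi_1|_{1,\Omega}^2+|\phi_2|_{1,\Omega}^2+\epsilon|\phi_3|_{1,\Omega}^2$, which contains no $\|\phi_3\|_{L^2(\Omega)}^2$ at all; the zero-order coupling $\int_{\Omega}(\phi_2-\phi_1)\phi_3\,d\bm{x}$ is sign-indefinite, so it cannot ``transfer $L^2$ control''---it must itself be absorbed, and the only available route (Poincar\'e on $\phi_3$ followed by Young against $\epsilon|\phi_3|_{1,\Omega}^2$) costs a factor $\epsilon^{-1}$. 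Likewise, recovering $\|\phi_3\|_{\epsilon,\Omega}^2=\|\phi_3\|_{L^2(\Omega)}^2+\epsilon|\phi_3|_{1,\Omega}^2$ from the available $\epsilon|\phi_3|_{1,\Omega}^2$ costs roughly $C_p^2/\epsilon$. So your argument, even if every cross term were absorbed, would yield at best $\|\bm{\phi}\|_{\epsilon,\Omega}\lesssim\epsilon^{-1}\|\bm{R}\|_{-\epsilon,\Omega}$ rather than the claimed $\epsilon$-independent $C_2$; you correctly identify this as the main obstacle but do not resolve it. The paper takes an entirely different, non-constructive route for this half: Lemma \ref{unique1} gives injectivity of $\bm{DF}(\bm{u}):(H_0^1(\Omega))^3\to(L^2(\Omega))^3$, the bounded inverse theorem supplies a bounded inverse, a Hahn--Banach extension $\bm{G}$ to $(H^{-1}(\Omega))^3$ gives $\|\bm{\phi}\|_{1,\Omega}\leq C_2\|\bm{R}\|_{-1,\Omega}$, and the elementary inequalities $\|\bm{\phi}\|_{\epsilon,\Omega}\leq\|\bm{\phi}\|_{1,\Omega}$ and $\|\bm{R}\|_{-1,\Omega}\leq\|\bm{R}\|_{-\epsilon,\Omega}$ finish. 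As written, your proposal does not establish the upper bound.
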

\begin{proof}
Eqs. \eqref{vmodel2} and \eqref{definition} lead to the variational form of Eq.~\eqref{modelDF} as%different with Eq. \eqref {model1D}
 \begin{align}\label{model2D}
<{\bm{DF}}({\bm{u}}){\bm{\phi}},{\bm{v}}> =&\int_{\Omega}(\nabla\phi_1\cdot\nabla v_1+p\nabla\phi_3\cdot\nabla v_1+\phi_1\nabla\psi\cdot\nabla v_1	+\nabla\phi_2\cdot\nabla v_2\nonumber\\
-n\nabla\phi_3&\cdot\nabla v_2-\phi_2\nabla\psi\cdot\nabla v_2+\epsilon\nabla\phi_3\cdot\nabla v_3-\phi_1v_3+\phi_2v_3)d{\bm x}.	
\end{align}
%and
%\begin{equation}
%\begin{array}{rcl}
%(R,v)=\int_{\Omega}(F_1v_1+F_2v_2+F_3v_3)dx
%\end{array}
%\end{equation}
%
% According to Green's formula and the properties of the distribution function, Eq. \eqref {model1D} and Eq. \eqref {model2D} are equivalent.

Owing to properties,
%\begin{align}
%	&\int_\Omega\nabla \phi_1\cdot \nabla v_1d{\bm x}\leq |\phi_1|_{1,\Omega}\cdot|v_1|_{1,\Omega}\leq \|\phi_1\|_{1,\Omega}\|v_1\|_{1,\Omega}, \\
%	&\int_\Omega \phi_1 v_3d{\bm x}\leq \|\phi_1\|_{L^2(\Omega)}\|v_3\|_{L^2(\Omega)}\leq \|\phi_1\|_{1,\Omega}\|v_3\|_{1,\Omega},	\\
%	&\int_\Omega p\nabla\phi_3\cdot\nabla v_1d{\bm x}\leq \|p\|_{L^{\infty}(\Omega)}\left | \int_\Omega \nabla\phi_3\cdot\nabla v_1d{\bm x}\right |\lesssim \|\phi_3\|_{1,\Omega}\|v_1\|_{1,\Omega},	\\
%&\int_\Omega \phi_1\nabla\psi\cdot\nabla v_1d{\bm x} \leq  \|\psi\|_{W^{1,\infty}(\Omega)} \int_\Omega(|\phi_1{\partial_x v_1}| +|\phi_1{\partial_y v_1}|)d{\bm x}
% \\
%&\quad \quad \quad \quad \quad \quad\quad \ \ \lesssim \|\phi_1\|_{L^2(\Omega)}|v_1|_{1,\Omega}
%\lesssim \|\phi_1\|_{1,\Omega}\|v_1\|_{1,\Omega},\nonumber
%\end{align}
\begin{align}
	&\int_\Omega\nabla \phi_1\cdot \nabla v_1d{\bm x}\leq |\phi_1|_{1,\Omega}\cdot {|v_1|_{1,\Omega}}, \\
	&\int_\Omega \phi_1 v_3d{\bm x}\leq \|\phi_1\|_{L^2(\Omega)}\|v_3\|_{L^2(\Omega)},	\\
	&\int_\Omega p\nabla\phi_3\cdot\nabla v_1d{\bm x}\leq \|p\|_{L^{\infty}(\Omega)}\left | \int_\Omega \nabla\phi_3\cdot\nabla v_1d{\bm x}\right |\lesssim |\phi_3|_{1,\Omega}|v_1|_{1,\Omega},	\\
&\int_\Omega \phi_1\nabla\psi\cdot\nabla v_1d{\bm x} \leq  \|\psi\|_{W^{1,\infty}(\Omega)} \int_\Omega(|\phi_1{\partial_x v_1}| +|\phi_1{\partial_y v_1}|)d{\bm x}
 \nonumber\\
&\quad \quad \quad \quad \quad \quad\quad \ \ \lesssim \|\phi_1\|_{L^2(\Omega)}|v_1|_{1,\Omega}
,
\end{align}
we have
%\begin{align}
%	|<{\bm{DF}}({\bm{u}}){\bm{\phi}},{\bm{v}}>|
%	 \lesssim &\|\phi_1\|_{1,\Omega}\|v_1\|_{1,\Omega}+\|\phi_3\|_{1,\Omega}\|v_1\|_{1,\Omega}+\|\phi_2\|_{1,\Omega}\|v_2\|_{1,\Omega} \\
%	 &+\|\phi_3\|_{1,\Omega}\|v_2\|_{1,\Omega} +\|\phi_3\|_{1,\Omega}\|v_3\|_{1,\Omega}+\|\phi_1\|_{1,\Omega}\|v_3\|_{1,\Omega} \nonumber\\
%	 &+\|\phi_2\|_{1,\Omega}\|v_3\|_{1,\Omega} \nonumber \\
%\lesssim &\|\bm{\phi}\|_{1,\Omega}\|\bm{v}\|_{1,\Omega},\nonumber
%\end{align}
\begin{align}
	|<{\bm{DF}}({\bm{u}}){\bm{\phi}},{\bm{v}}>|
	 \lesssim &{|\phi_1|_{1,\Omega}|v_1|_{1,\Omega}+|\phi_3|_{1,\Omega}|v_1|_{1,\Omega}+\|\phi_1\|_{L^2(\Omega)}|v_1|_{1,\Omega}
} \nonumber\\
	 &+|\phi_2|_{1,\Omega}|v_2|_{1,\Omega}+{|\phi_3|_{1,\Omega}|v_2|_{1,\Omega}+\|\phi_2\|_{L^2(\Omega)}|v_2|_{L^2(\Omega)} } \nonumber\\
	 &+\epsilon|\phi_3|_{1,\Omega}|v_3|_{1,\Omega}+\|\phi_1\|_{L^2(\Omega)}\|v_3\|_{L^2(\Omega)}+{\|\phi_2\|_{L^2(\Omega)}\|v_3\|_{L^2(\Omega)}} \nonumber \\
 %\lesssim & {(\sum\limits_{i=1}^2(\|\phi_i\|_{L^2(\Omega)}^2+\epsilon^{-1}\epsilon|\phi_i|_{1,\Omega}^2)+\varepsilon^2|\phi_3|_{1,\Omega}^2+\epsilon^{-1}\epsilon|\phi_3|_{1,\Omega}^2)^{1/2}\|\bm{v}\|_{1,\Omega}},\nonumber\\
% \lesssim & {(\sum\limits_{i=1}^2\max\{1,\epsilon^{-1}\}(\|\phi_i\|_{L^2(\Omega)}^2+\epsilon|\phi_i|_{1,\Omega}^2)+2\max\{\epsilon,\epsilon^{-1}\}\epsilon|\phi_3|_{1,\Omega}^2)^{1/2}\max\{1,\epsilon^{-1/2}\}\|\bm{v}\|_{\epsilon,\Omega}},\nonumber\\
\lesssim &{\epsilon^{-1}\|\bm{\phi}\|_{\epsilon,\Omega}\|\bm{v}\|_{\epsilon,\Omega}},
\end{align}
and hence {$\|\bm{\phi}\|_{\epsilon,\Omega}\geq \epsilon C_1\|\bm{R}\|_{{-\epsilon},\Omega}$}, with $C_1$ independent of ${\bm{\phi}}$, ${\bm{R}}$ and $\epsilon$.

Besides, we have the one-to-one mapping ${\bm{DF}}(\bm{u}): (H_0^1(\Omega))^3\rightarrow (L^2(\Omega))^3$ due to Lemma \ref{unique1}.
%${\bm{DF}}(\bm{u})\in\mathcal{L}((H_0^1(\Omega))^3,(L^2(\Omega))^3)$ where $\mathcal{L}(a,b)$ denotes the space of bounded linear operators from $a$ to $b$.
By the bounded inverse theorem there exists an inverse
${\bm{DF}}(\bm{u})^{-1}: (L^2(\Omega))^3$ $\rightarrow$ $ (H_0^1(\Omega))^3$,
%${\bm{DF}}(\bm{u})^{-1}\in \mathcal{L}((L^2(\Omega))^3,(H_0^1(\Omega))^3)$
and by the Hahn-Banach theorem there exists a linear extension $\bm{G}: (H^{-1}(\Omega))^3\rightarrow (H_0^1(\Omega))^3$ satisfying $\bm{G}|_{(L^2(\Omega))^3}={\bm{DF}}(\bm{u})^{-1}$ and $\|\bm{G}\|=\|{\bm{DF}}(\bm{u})^{-1}\|$. Thus,  $\forall \bm{R}\in (L^2(\Omega))^3\subset(H^{-1}(\Omega))^3$,
\begin{equation}
	\|\bm{\phi}\|_{1,\Omega}= \|\bm{G} \bm{R}\|_{1,\Omega}\leq C_2\|\bm{R}\|_{-1,\Omega},
\end{equation}
then, with $\|\bm{v}\|_{\epsilon,\Omega}\leq \|\bm{v}\|_{1,\Omega},\forall \bm{v}\in (H_0^1(\Omega))^3$, we have
\begin{equation}
	\|\bm{\phi}\|_{\epsilon,\Omega}\leq \|\bm{\phi}\|_{1,\Omega}\leq C_2\|\bm{R}\|_{-1,\Omega}\leq C_2\|\bm{R}\|_{-\epsilon,\Omega},
\end{equation}
and Eq. \eqref{regularity} is proved with $C_1$ and $C_2$ independent of $\bm{\phi}$, $\bm{R}$ and $\epsilon$.
\end{proof}
%\begin{remark}
%See the next section for the proof of the above two theorems.
%\end{remark}
\begin{theorem}\label{key}
%For Problem~\ref{weakequation},
{If $p$, $n$ and $\psi$ satisfy the condition in Lemma \ref{unique1},} and
 %{ $\max\limits_{i=1,2}\{\|e_i\|_{L^{\infty}(\Omega)}\}\leq 1/(2\sqrt{2}C_2)$},
{the error $\bm{e}$ is small enough,} then,
\begin{equation} \label{basic1}
{{2\epsilon C_1C_2/(  C_1+2C_2)}}\|{\bm{F}}({\bm{u}_h})\|_{{-\epsilon},\Omega}\leq \|\bm{e}\|_{{\epsilon},\Omega}\leq 2C_2\|{\bm{F}}({\bm{u}_h})\|_{{-\epsilon},\Omega}.
\end{equation}
Here, $C_1$ and $C_2$ are constants in Eq. \eqref{regularity} as taking $\bm{\phi}=\bm{e}$.
%\textcolor{red}{ (comment: generally, $1/(2\sqrt{2}C_2)$ can be modified here, while only leads to different constants in Eq. \eqref{basic1})}
\end{theorem}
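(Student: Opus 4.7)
The plan is to follow the Verf\"urth framework for a posteriori analysis of nonlinear problems: use $\bm{F}(\bm{u})=\bm{0}$ to expand $\bm{F}(\bm{u}_h)$ around $\bm{u}$ via the $G$-derivative, invoke the regularity bound of Lemma \ref{regularity1} to convert the resulting linearized residual into $\|\bm{e}\|_{\epsilon,\Omega}$, and finally absorb the quadratic nonlinear remainder using the smallness of $\bm{e}$.

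First, subtracting the weak forms \eqref{vmodel2} at $\bm{u}$ and $\bm{u}_h$ and matching against \eqref{model2D} evaluated at $\bm{\phi}=\bm{e}$, I identify
\[
\bm{DF}(\bm{u})\bm{e}=-\bm{F}(\bm{u}_h)-R(\bm{e}),
\]
where the purely bilinear nonlinearity in $\bm{F}$ yields the explicit remainder $\langle R(\bm{e}),\bm{v}\rangle=\int_\Omega(-e_1\nabla e_3\cdot\nabla v_1+e_2\nabla e_3\cdot\nabla v_2)\,d\bm{x}$, which is exactly quadratic in $\bm{e}$ (the third PNP equation is already linear in $\bm{u}$, so it contributes nothing to $R$). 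Applying Lemma \ref{regularity1} with $\bm{\phi}=\bm{e}$ and the choice $\bm{R}=\bm{DF}(\bm{u})\bm{e}$ yields the two-sided bound $\epsilon C_1\|\bm{DF}(\bm{u})\bm{e}\|_{-\epsilon,\Omega}\le\|\bm{e}\|_{\epsilon,\Omega}\le C_2\|\bm{DF}(\bm{u})\bm{e}\|_{-\epsilon,\Omega}$, and combining this with the triangle inequality produces the two working inequalities
\[
\|\bm{e}\|_{\epsilon,\Omega}\le C_2\bigl(\|\bm{F}(\bm{u}_h)\|_{-\epsilon,\Omega}+\|R\|_{-\epsilon,\Omega}\bigr),\qquad \|\bm{F}(\bm{u}_h)\|_{-\epsilon,\Omega}\le(\epsilon C_1)^{-1}\|\bm{e}\|_{\epsilon,\Omega}+\|R\|_{-\epsilon,\Omega}.
\]

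The main obstacle is the remainder bound $\|R\|_{-\epsilon,\Omega}\le\tfrac{1}{2C_2}\|\bm{e}\|_{\epsilon,\Omega}$, which is what the hypothesis ``$\bm{e}$ is small enough'' should be chosen to guarantee. My approach is a H\"older splitting of the prototypical term $\int_\Omega e_1\nabla e_3\cdot\nabla v_1\,d\bm{x}$ combined with the 2D Sobolev embedding $H^1(\Omega)\hookrightarrow L^4(\Omega)$, bounding it by $\|e_1\|_{L^4}\|\nabla e_3\|_{L^2}\|\nabla v_1\|_{L^4}\lesssim\|\bm{e}\|_{\epsilon,\Omega}^{2}\|\bm{v}\|_{\epsilon,\Omega}$ up to powers of $\epsilon^{-1/2}$ arising when trading $|\cdot|_{1,\Omega}$ for $\|\cdot\|_{\epsilon,\Omega}$. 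The bookkeeping on these $\epsilon$-factors is delicate and is exactly what must be handled carefully so that the reliability constant $2C_2$ remains independent of $\epsilon$, while the efficiency constant necessarily picks up an explicit factor of $\epsilon$.

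Once the remainder bound is in place, the rest is algebra. Substituting $\|R\|_{-\epsilon,\Omega}\le\tfrac{1}{2C_2}\|\bm{e}\|_{\epsilon,\Omega}$ into the first working inequality gives $\tfrac12\|\bm{e}\|_{\epsilon,\Omega}\le C_2\|\bm{F}(\bm{u}_h)\|_{-\epsilon,\Omega}$, i.e., the reliability bound $\|\bm{e}\|_{\epsilon,\Omega}\le 2C_2\|\bm{F}(\bm{u}_h)\|_{-\epsilon,\Omega}$. Substituting the same bound into the second working inequality gives $\|\bm{F}(\bm{u}_h)\|_{-\epsilon,\Omega}\le\bigl((\epsilon C_1)^{-1}+(2C_2)^{-1}\bigr)\|\bm{e}\|_{\epsilon,\Omega}=\frac{\epsilon C_1+2C_2}{2\epsilon C_1C_2}\|\bm{e}\|_{\epsilon,\Omega}$; finally invoking the standing assumption $\epsilon\le 1$ to bound $\epsilon C_1+2C_2\le C_1+2C_2$ yields precisely the efficiency constant $2\epsilon C_1C_2/(C_1+2C_2)$ stated in Theorem \ref{key}.
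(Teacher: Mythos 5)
Your overall architecture is the same as the paper's: expand $\bm{F}(\bm{u}_h)$ around $\bm{u}$ using the quadratic structure of $\bm{F}$, apply the two-sided regularity bound of Lemma \ref{regularity1} to $\bm{DF}(\bm{u})\bm{e}$, absorb the quadratic remainder via the smallness hypothesis, and finish with the same algebra (including the use of $\epsilon\le 1$ to replace $\epsilon C_1+2C_2$ by $C_1+2C_2$). The identification of the remainder as the exact second-order term $\int_\Omega(e_1\nabla e_3\cdot\nabla v_1-e_2\nabla e_3\cdot\nabla v_2)\,d\bm{x}$ is correct and equivalent to the paper's integral form $\int_0^1\langle \bm{DF}(\bm{u})\bm{e}-\bm{DF}(\bm{u}+t\bm{e})\bm{e},\bm{w}\rangle\,dt$.

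However, the step you yourself flag as the main obstacle --- the remainder bound --- contains a genuine error. You propose the H\"older splitting $\|e_1\|_{L^4}\|\nabla e_3\|_{L^2}\|\nabla v_1\|_{L^4}$ and then claim this is $\lesssim\|\bm{e}\|_{\epsilon,\Omega}^2\|\bm{v}\|_{\epsilon,\Omega}$. But $\|\nabla v_1\|_{L^4(\Omega)}$ is not controlled by $\|v_1\|_{H^1(\Omega)}$ (nor by $\|\bm{v}\|_{\epsilon,\Omega}$), and the supremum defining $\|R\|_{-\epsilon,\Omega}$ ranges over all $\bm{v}\in(H_0^1(\Omega))^3$, so this factor cannot be tamed. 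Worse, there is no admissible repair within $H^1$ alone: since $\nabla e_3$ and $\nabla v_1$ are only in $L^2$, H\"older forces $e_1$ into $L^\infty$, and in two dimensions $H^1(\Omega)\not\hookrightarrow L^\infty(\Omega)$; the trilinear form $\int_\Omega e_1\nabla e_3\cdot\nabla v_1\,d\bm{x}$ is simply not bounded on $(H^1)^3$. Consequently your intended smallness hypothesis ``$\|\bm{e}\|_{\epsilon,\Omega}$ small implies $\|R\|_{-\epsilon,\Omega}\le\tfrac{1}{2C_2}\|\bm{e}\|_{\epsilon,\Omega}$'' cannot be derived this way. The paper instead estimates the remainder by
$2t\max_{i=1,2}\|e_i\|_{L^\infty(\Omega)}\,\|\nabla e_3\|_{L^2(\Omega)}(\|\nabla w_1\|_{L^2(\Omega)}+\|\nabla w_2\|_{L^2(\Omega)})\le 2\sqrt{2}\,t\,\epsilon^{-1}\max_i\|e_i\|_{L^\infty(\Omega)}\|\bm{e}\|_{\epsilon,\Omega}\|\bm{w}\|_{\epsilon,\Omega}$,
and the hypothesis ``$\bm{e}$ small enough'' is made precise as the $L^\infty$ condition $\max_{i=1,2}\|e_i\|_{L^\infty(\Omega)}\le\epsilon/(2\sqrt{2}C_2)$, which is exactly what makes the absorption constant equal to $1/2$. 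If you replace your Sobolev-embedding step with this $L^\infty$-weighted estimate (and state the smallness assumption in that norm), the rest of your argument goes through and reproduces the paper's proof.
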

\begin{proof}
The definition of $G$-derivative~\cite{Zeidler} indicates that,   $\forall {\bm{w}}\in (H_0^1(\Omega))^3$,
\begin{equation}
	\int_{0}^{1}<{\bm{DF}}(\bm{u}+t\bm{e})\bm{e},\bm{{\bm{w}}}>dt=<{\bm{F}}({\bm{u}_h}),{\bm{w}}>-<{\bm{F}}({\bm{u}}),{\bm{w}}>.
\end{equation}
By means of the fact that $<{\bm{F}}({\bm{u}}),{\bm{w}}>=0,\ \forall {\bm{w}}\in (H_0^1(\Omega))^3$, we have
\begin{equation}\label{proofth1basic}
<{\bm{F}}({\bm{u}_h}),{\bm{w}}> = \int_{0}^{1}<{\bm{DF}}({\bm{u}}+t{\bm{e}}){\bm{e}},{\bm{w}}>dt,
\end{equation}
and thus,
\begin{equation}\label{refine}
{ <{\bm{DF}}({\bm{u}}){\bm{e}},{\bm{w}}>=\int_{0}^{1} <{\bm{DF}}({\bm{u}}){\bm{e}}-{\bm{DF}}({\bm{u}}+t{\bm{e}}){\bm{e}},{\bm{w}}>dt+<{\bm{F}}({\bm{u}_h}),{\bm{w}}> .}
\end{equation}
{We now define $<\bm{DF}({\bm{u}}){\bm{e}},{\bm{w}}>=<\widetilde{\bm{R}},{\bm{w}}>$ for convenience. Then, the right inequality in \eqref{regularity} leads to {$\|{\bm{e}}\|_{\epsilon,\Omega}\leq C_2||\widetilde{\bm{R}}||_{{-\epsilon},\Omega}$} as taking $\bm{\phi}=\bm{e}$.}
%{ Here, we set 	$<\widetilde{\bm{R}},{\bm{w}}>$ is equivalent to the right end of Eq. \eqref{refine}.  For a given $\widetilde{\bm{R}}\in (L^2(\Omega))^3$, a problem of \eqref{modelDF} is constructed in which ${\bm{e}}$ is the solution of the equation. The second inequality of Eq. \eqref{regularity}  in Lemma \ref{regularity1} now leads to $\|{\bm{e}}\|_{1,\Omega}\leq C_2||\widetilde{\bm{R}}||_{-1,\Omega}.$}
%In fact,
%$$e_1=p-p_h\in L^{\infty}(\Omega),\ e_2=n-n_h\in L^{\infty}(\Omega)$$
%Next, the integral part in Eq. \eqref{refine} is estimated, i.e.,
%\begin{align}
%&|<{\bm{DF}}({\bm{u}}){\bm{e}}-{\bm{DF}}({\bm{u}}+t{\bm{e}}){\bm{e}},{\bm{w}}>|	\label{3.4}\\
%=	&	2t\left|\int_{\Omega}(e_1\nabla e_3\cdot\nabla w_1-e_2\nabla e_3\cdot\nabla w_2)d{\bm x}\right|	\nonumber\\
%\leq	& 2t\max_{i=1,2}\{\|e_i\|_{L^{\infty}(\Omega)}\}\left(||\nabla e_3||_{L^2(\Omega)} ||\nabla w_1||_{L^2(\Omega)} + ||\nabla e_3||_{L^2(\Omega)} ||\nabla w_2||_{L^2(\Omega)}\right)		\nonumber\\
%\leq	& 2\sqrt{2} t\max_{i=1,2}\{\|e_i\|_{L^{\infty}(\Omega)}\}	
%\|{\bm{e}}\|_{1,\Omega}\|{\bm{w}}\|_{1,\Omega},\nonumber
%\end{align}
Next, the integral part in Eq. \eqref{refine} is estimated, i.e.,
\begin{align}
&|<{\bm{DF}}({\bm{u}}){\bm{e}}-{\bm{DF}}({\bm{u}}+t{\bm{e}}){\bm{e}},{\bm{w}}>|	\nonumber\\
=	&	2t\left|\int_{\Omega}(e_1\nabla e_3\cdot\nabla w_1-e_2\nabla e_3\cdot\nabla w_2)d{\bm x}\right|	\nonumber\\
\leq	& 2t\max_{i=1,2}\{\|e_i\|_{L^{\infty}(\Omega)}\}(||\nabla e_3||_{L^2(\Omega)} ||\nabla w_1||_{L^2(\Omega)} + ||\nabla e_3||_{L^2(\Omega)} ||\nabla w_2||_{L^2(\Omega)})		\nonumber\\
\leq	& 2\sqrt{2} t\epsilon^{-1}\max_{i=1,2}\{\|e_i\|_{L^{\infty}(\Omega)}\}	
\|{\bm{e}}\|_{\epsilon,\Omega}\|{\bm{w}}\|_{\epsilon,\Omega},\label{3.4}
\end{align}
and hence
%\begin{equation}
%	\left|\int_{0}^{1} ({\bm{DF}}({\bm{u}}){\bm{e}}-{\bm{DF}}({\bm{u}}+t{\bm{e}}){\bm{e}},{\bm{w}})dt\right|\leq \sqrt{2}\max_{i=1,2}\{\|e_i\|_{L^{\infty}(\Omega)}\}
%\|{\bm{e}}\|_{1,\Omega}\|{\bm{w}}\|_{1,\Omega} .
%\end{equation}
%
%Let ${\bm{\phi}}={\bm{e}}$, t,
%\begin{align}
%&\|{\bm{e}}\|_{1,\Omega} \\
%\leq& C_2||{\widetilde{\bm{R}}}||_{-1,\Omega}\nonumber\\
%\leq& C_2\sup\limits_{\|{\bm{w}}\|_{1,\Omega}=1}\left(\left|\int_{0}^{1} <{\bm{DF}}({\bm{u}}){\bm{e}}-{\bm{DF}}({\bm{u}}+t{\bm{e}}){\bm{e}},{\bm{w}}>dt\right|+|<{\bm{F}}({\bm{u}_h}),{\bm{w}}>|\right)\nonumber\\
%\leq& C_2\sup\limits_{\|{\bm{w}}\|_{1,\Omega}=1}\left(\sqrt{2}\max_{i=1,2}\{\|e_i\|_{L^{\infty}(\Omega)}\}
%\|{\bm{e}}\|_{1,\Omega}\|{\bm{w}}\|_{1,\Omega}\right)+C_2\|F({\bm{u}_h})\|_{-1,\Omega} \nonumber\\
%=&\sqrt{2}C_2\max_{i=1,2}\{\|e_i\|_{L^{\infty}(\Omega)}\}\|{\bm{e}}\|_{1,\Omega}+C_2\|F({\bm{u}_h})\|_{-1,\Omega}.\nonumber
%\end{align}
\begin{align}
&\|{\bm{e}}\|_{\epsilon,\Omega} \nonumber\\
\leq& C_2||{\widetilde{\bm{R}}}||_{{-\epsilon},\Omega}\nonumber\\
\leq& C_2\sup\limits_{\|{\bm{w}}\|_{\epsilon,\Omega}=1}\left(\left|\int_{0}^{1} <{\bm{DF}}({\bm{u}}){\bm{e}}-{\bm{DF}}({\bm{u}}+t{\bm{e}}){\bm{e}},{\bm{w}}>dt\right|+|<{\bm{F}}({\bm{u}_h}),{\bm{w}}>|\right)\nonumber\\
\leq& C_2\sup\limits_{\|{\bm{w}}\|_{\epsilon,\Omega}=1}\left(\sqrt{2}\epsilon^{-1}\max_{i=1,2}\{\|e_i\|_{L^{\infty}(\Omega)}\}
\|{\bm{e}}\|_{\epsilon,\Omega}\|{\bm{w}}\|_{\epsilon,\Omega}\right)+C_2\|F({\bm{u}_h})\|_{{-\epsilon},\Omega} \nonumber\\
=&{\sqrt{2}\epsilon^{-1}C_2\max_{i=1,2}\{\|e_i\|_{L^{\infty}(\Omega)}\}\|{\bm{e}}\|_{\epsilon,\Omega}}+C_2\|F({\bm{u}_h})\|_{{-\epsilon},\Omega}.
\end{align}
Notably, we shall have $\|{\bm{e}}\|_{\epsilon,\Omega}\lesssim \|{\bm{F}}({\bm{u}_h})\|_{{-\epsilon},\Omega}$ if $\max\limits_{i=1,2}\{\|e_i\|_{L^{\infty}(\Omega)}\}< \epsilon/\left(\sqrt{2}C_2\right)$, however, we restrict the condition as $\max\limits_{i=1,2}\{\|e_i\|_{L^{\infty}(\Omega)}\}\leq \epsilon/(2\sqrt{2}C_2)$ for convenience, and obtain
\begin{equation}
	\|{\bm{e}}\|_{{\epsilon},\Omega}\leq 2C_2 \|{\bm{F}}({\bm{u}_h})\|_{{-\epsilon},\Omega}.
\end{equation}
This proves the second inequality in \eqref{basic1}.

%In order to prove the left-hand side of the inequality~\eqref{basic1},
On the other hand, we rewrite Eq. $\eqref{proofth1basic}$ as
\begin{equation}
	<{\bm{F}}({\bm{u}_h}),{\bm{w}}> =  <{\bm{DF}}({\bm{u}}){\bm{e}},{\bm{w}}> -\int_{0}^{1} <{\bm{DF}}({\bm{u}}){\bm{e}}-{\bm{DF}}({\bm{u}}+t{\bm{e}}){\bm{e}},{\bm{w}}>dt,
\end{equation}
thereby,
%\begin{align}
%	\|{\bm{F}}({\bm{u}_h})\|_{-1,\Omega} \leq & \|{\bm{DF}}({\bm{u}}){\bm{e}}\|_{-1,\Omega} +\sqrt{2}\max_{i=1,2}\{\|e_i\|_{L^{\infty}(\Omega)}\}\|{\bm{e}}\|_{1,\Omega}  \\
%	\leq & 1/C_1\|{\bm{e}}\|_{1,\Omega} + \sqrt{2}\max_{i=1,2}\{\|e_i\|_{L^{\infty}(\Omega)}\}\epsilon^{-1}\|{\bm{e}}\|_{1,\Omega} \nonumber\\
%	\leq & 1/C_1\|{\bm{e}}\|_{1,\Omega} + 1/(2C_2)\|{\bm{e}}\|_{1,\Omega},\nonumber
%\end{align}
\begin{align}
	\|{\bm{F}}({\bm{u}_h})\|_{{-\epsilon},\Omega} \leq & \|{\bm{DF}}({\bm{u}}){\bm{e}}\|_{{-\epsilon},\Omega} +\sqrt{2}\max_{i=1,2}\{\|e_i\|_{L^{\infty}(\Omega)}\}\epsilon^{-1}\|{\bm{e}}\|_{\epsilon,\Omega} \nonumber \\
	\leq & 1/(\epsilon C_1)\|{\bm{e}}\|_{\epsilon,\Omega} + \sqrt{2}\max_{i=1,2}\{\|e_i\|_{L^{\infty}(\Omega)}\}\epsilon^{-1}\|{\bm{e}}\|_{\epsilon,\Omega} \nonumber\\
	\leq & {[1/(\epsilon C_1) + 1/(2C_2)]\|{\bm{e}}\|_{\epsilon,\Omega}},
\end{align}
where  the second inequality is given by using the left inequality in \eqref{regularity}. Consequently, we have
\begin{equation}
{	 \|{\bm{e}}\|_{\epsilon,\Omega} \geq 2\epsilon C_1C_2/\left( C_1+2C_2\right)\|{\bm{F}}({\bm{u}_h})\|_{{-\epsilon},\Omega}},
\end{equation}
which proves the first inequality in \eqref{basic1}.
%\geq 2\epsilon C_1C_2/\left( C_1+2C_2\right)\|{\bm{F}}({\bm{u}_h})\|_{-1,\Omega}},

%Therefore,
%\begin{equation}
%	C_3\|F({\bm{u}_h})\|_{-1,\Omega}\leq \|e\|_{1,\Omega}\leq C_4\|F({\bm{u}_h})\|_{-1,\Omega}.
%\end{equation}
%$$C\|F({\bm{u}_h})\|_{-1,\Omega}\leq \|e\|_{1,\Omega}\leq C\|F({\bm{u}_h})\|_{-1,\Omega}$$
\end{proof}
%\qed

%In fact
%\begin{align}
%&(DF(u_h)e,w)	\nonumber\\
%=&\int_{\Omega}(\nabla e_1\cdot\nabla w_1+p\nabla e_3\cdot\nabla w_1+e_1\nabla\psi\cdot\nabla w_1+\nabla e_2\cdot\nabla w_2-n\nabla e_3\cdot\nabla w_2 \nonumber\\
%&-e_2\nabla\psi\cdot\nabla w_2+\nabla e_3\cdot\nabla w_3-e_1w_3+e_2w_3)dx
%\end{align}
%and then
%\begin{equation}
%|(DF(u_h)e,w)|\leq  C_3\|e\|_{1,\Omega}\|w\|_{1,\Omega}
%\end{equation}
%hence we have
%\begin{equation}
%	C_1\|F(u_h)\|_{-1,\Omega}\leq \|e\|_{1,\Omega}
%\end{equation}
%with $C_1 = C_0/(C_0C_3+1)$.

\subsection{Relationship between ${\vect{F}}({\vect{u}}_h)$ and the a posteriori error estimator $\eta$} \label{sec:Fandeta}
%In this section, we prove the following inequalities,
%\textcolor{blue}{
%\begin{equation}	
%	c\eta\leq\|{\bm{F}}({\bm{u}_h})\|_{-1,\Omega}\leq C\eta,
%\end{equation}}
%with \textcolor{blue}{$c$ and $C$} independent of $\bm{u}_h$ and $\bm{f}$.

First, some notations are given as follows.
{For a regular triangle subdivision $\mathcal{T}_h$ of $\Omega$, $\mathcal{N}_h$ represents the set of all vertices divided,  $\mathcal{E}_{h}$ represents all edges contained in $\mathcal{T}_h$, and $\mathcal{I}_h=\mathcal{E}_{h}\backslash\partial\Omega$ contains the inner edges of $\mathcal{T}_h$.
We set $\widetilde{w}_T=\bigcup_{x_i\in T}\Omega_i$ and $\widetilde{w}_E=\bigcup_{x_i\in E}\Omega_i$, where $\Omega_i = \{T\in \mathcal{T}_h,\ x_i\in T\}$}. $h_B=\textup{diam}(B)$ denotes the diameter of any set $B$.
Let  $E$ be the shared edge of $T$ and $T'$, i.e., $E = T\cap T'$, and $\bm{n}_E$ represent the outward normal vector of $E$ in $T$, we define the jump across the edge by
\begin{equation}
	[\nabla v\cdot \bm{n}_E]:=\nabla v\cdot \bm{n}_E|_{T}-\nabla v\cdot \bm{n}_E|_{T'} ,\quad\forall v\in H_0^1(\Omega),
\end{equation}
and $\forall E\in\partial\Omega$, we set $[\nabla v\cdot \bm{n}_E]=0$ for convenience.

Next, we define ${\widetilde{\bm{F}}}(\bm{u}_h)$ by, $\forall \bm{v}=(v_1,v_2,v_3)\in (H_0^1(\Omega))^3$,
\begin{align}
	<{\widetilde{\bm{F}}}(\bm{u}_h),\bm{v}>=\int_{\Omega}(\nabla p_h\cdot\nabla v_1+p_h\nabla\psi_h\cdot\nabla v_1
+\nabla n_h \cdot\nabla v_2-n_h\nabla\psi_h\cdot \nabla v_2 \nonumber\\
+\epsilon\nabla\psi_h\cdot \nabla v_3-p_hv_3+n_hv_3)d\bm{x}-\sum\limits_{T\in \mathcal{T}_h}\int_{T}(f_{T,1}v_1+f_{T,2}v_2+f_{T,3}v_3)d\bm{x},	\label{posteriori1}
\end{align}	
with the mean value of $f_i$ over $T$ being $f_{T,i}=\int_{T}f_i d\bm{x} / |T|,\  i=1,2,3$. Here, $|T|$ denotes the area of $T$.

Let $\lambda_{T,i}$ ($i=1,2,3$) be the area coordinates of the reference element $T$, we define the  bubble functions $b_T$ and $b_E$ as follows,
\begin{align}
	b_T(\bm{x})
&=\begin{cases}
\begin{array}{cl}
27\lambda_{T,1}\lambda_{T,2}\lambda_{T,3},\ \ & \bm{x} \in T,\\
0,\ \ & \bm{x}\in \Omega\backslash T,
\end{array}
\end{cases}	\\
b_{E}(\bm{x})
&=\begin{cases}
\begin{array}{cl}
4\lambda_{T_1,j}\lambda_{T_1,k},\ \ &\bm{x}\in T_1,\\
4\lambda_{T_2,l}\lambda_{T_2,m},\ \ &\bm{x}\in T_2,\\
0,\ \ & \bm{x}\in \Omega\backslash {w}_E,
\end{array}
\end{cases}
\end{align}
where $j$ and $k$ are the indices of $E$'s two vertexes associated with $T_1$ while $l$ and $m$ are those with $T_2$, and $w_E= T_1\cup T_2$.
The space of vector bubble functions is then denoted by
${\bm{\widetilde{Y}}}_h:=(\widetilde{Y}_h^0)^3$
with
$\widetilde{Y}_h^0 =\textup{span}\{ b_T u,\ b_EP w : \forall u\in \mathcal{P}_1(T),\ \forall w\in\mathcal{P}_1(E),\ \forall T\in\mathcal{T}_h,\ \forall E\in\mathcal{I}_{h}\}$.
Here, $\mathcal{P}_1(T)$ and $\mathcal{P}_1(E)$ are spaces of linear polynomials on $T$ and $E$, respectively, and $P: L^\infty(E)\rightarrow L^\infty(T)$ is a continuation operator.
%here $\mathcal{P}_m(T)$ and $\mathcal{P}_m(E)$ are polynomial spaces of degree at most $m$ on $T$ and $E$, respectively, and $m=1$ in the present work corresponds to the linear finite element approximation.

Two lemmas are now given before showing Theorems \ref{th2} and \ref{th3}.

%For any finite dimensional space $V_{\widehat{T}}\subset L^{\infty}(\widehat{T})^3,\ V_{\widehat{E}}\subset L^{\infty}(\widehat{E})^3,F_T:\widehat{T}\rightarrow T$
%$$\mu_{\widehat{T}}:=(n+1)^{n+1}\prod\limits_{i=1}^{n+1}\widehat{\lambda}_i,\mu_{\widehat{E}}:=n^n\prod\limits_{i=1}^n\widehat{\lambda}_i$$
%$$\mu_{T}:=\mu_{\widehat{T}}\circ F_{T}^{-1,\Omega},\ \ \mu_{E}:=\mu_{\widehat{E}}\circ F_{T}^{-1,\Omega}$$
%$$V_{T}:=V_{\widehat{T}}\circ F_{T}^{-1,\Omega},\ V_{E}:=V_{\widehat{E}}\circ F_{T}^{-1,\Omega}$$
%we set the bubble function spaces as following
%\begin{equation*}
%\begin{array}{rcl}
%\widetilde{Y}_h^0&=\{(\mu_Tv_1,0,0),(0,\mu_Tv_2,0),(0,0,\mu_Tv_3),(\mu_E\sigma_1,0,0),(0,\mu_E\sigma_2,0),(0,0,\mu_E\sigma_3):\\
%&v\in V_{T},T\in\mathcal{T}_h,\sigma\in V_{E} ,E\in\mathcal{E}_{h}\}
%\end{array}
%\end{equation*}
%and
%$$\widetilde{Y}_h:=(\widetilde{Y}_h^0)^3$$
%which represent the  bubble function  under vector space .

%\begin{lemma}(The estimation of Scott-Zhang interpolation \cite{Scootzhang})~
%Let  {$R_h$} be the Scott-Zhang interpolation operator for a regular partition,
%then,
%\begin{align}
%\|v-R_hv\|_{L^2(T)} &\leq C_3  h_{T}|v|_{1,\widetilde{w}_T},\quad \forall T \in\mathcal{T}_h,\label{eq:SZ1} \\
%\|v-R_hv\|_{L^2(E)} &\leq C_4 h_{E}^{{1}/{2}}|v|_{1,\widetilde{w}_E},\quad \forall E\in \mathcal{E}_{h}, \label{eq:SZ2}
%\end{align}
%{where constants $C_3$ and $C_4$ depend on }the reference element and regular partition only.
%%\textcolor{red}{ (add definition of Scott Zhang interpolation and show the inequality of its norm, details can be put into Appendix)}
%\end{lemma}

\begin{lemma}%(The estimation of Scoot Zhang interpolation)~\cite{Scootzhang}
Let  {$R_h$} be the Scott-Zhang interpolation operator~\cite{Scootzhang} for a regular partition,
then,
\begin{align}
\|v-R_hv\|_{L^2(T)} &\leq C_3 h_T\epsilon^{-1/2} \|v\|_{\epsilon,\widetilde{w}_T},\quad \forall T \in\mathcal{T}_h,\label{eq:SZ1} \\
\|v-R_hv\|_{L^2(E)} &\leq C_4  h_E^{1/2}\epsilon^{-1/2}\|v\|_{\epsilon,\widetilde{w}_E},\quad \forall E\in \mathcal{E}_{h}, \label{eq:SZ2}\\
{\|R_h v\|_{L^2(T)}}&\leq C_5 h_T\epsilon^{-1/2}\|v\|_{\epsilon,\widetilde{w}_T},\quad \forall T \in\mathcal{T}_h,\label{eq:SZ3}\\
{\|R_h v\|_{L^2(E)}}&\leq C_6 h_E^{1/2}\epsilon^{-1/2}\|v\|_{\epsilon,\widetilde{w}_E},\quad \forall E\in \mathcal{E}_{h},\label{eq:SZ4}
\end{align}
where constants $C_3$, ..., $C_6$ only depend on the reference element and regular partition.
%\textcolor{red}{ (add definition of Scott Zhang interpolation and show the inequality of its norm, details can be put into Appendix)}
\end{lemma}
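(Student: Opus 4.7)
The plan is to reduce each of the four inequalities to classical Scott--Zhang interpolation estimates on a shape-regular mesh and then repackage the right-hand side in the $\epsilon$-weighted norm by factoring out a power of $\epsilon^{-1/2}$.

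For the error bounds \eqref{eq:SZ1} and \eqref{eq:SZ2}, I would invoke the standard results obtained by the Bramble--Hilbert lemma combined with scaling on the reference element,
\begin{equation*}
\|v - R_h v\|_{L^2(T)} \lesssim h_T\, |v|_{1,\widetilde{w}_T}, \qquad \|v - R_h v\|_{L^2(E)} \lesssim h_E^{1/2}\, |v|_{1,\widetilde{w}_E},
\end{equation*}
with constants depending only on the reference triangle and on the shape-regularity of $\mathcal{T}_h$. The definition $\|v\|_{\epsilon,\omega}^2 = \|v\|_{L^2(\omega)}^2 + \epsilon |v|_{1,\omega}^2$ yields $\epsilon^{1/2}|v|_{1,\omega}\le\|v\|_{\epsilon,\omega}$, and inserting this bound into the two inequalities produces \eqref{eq:SZ1} and \eqref{eq:SZ2} immediately.

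For the stability estimates \eqref{eq:SZ3} and \eqref{eq:SZ4}, my strategy is to start from the triangle inequality
\begin{equation*}
\|R_h v\|_{L^2(T)} \le \|v\|_{L^2(T)} + \|v - R_h v\|_{L^2(T)},
\end{equation*}
where the second term is already controlled by the previous step. For the first term I would attempt a local Poincar\'e--Friedrichs argument on the patch $\widetilde{w}_T$ (using $v\in H_0^1(\Omega)$ and the bounded-overlap structure of Scott--Zhang patches) to trade $\|v\|_{L^2}$ for an $h_T$-scaled $H^1$-seminorm of $v$; the same $\epsilon^{-1/2}$ rescaling then converts the resulting seminorm into the desired right-hand side. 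The trace analogue \eqref{eq:SZ4} follows by inserting the scaled trace inequality $\|w\|_{L^2(E)}^2 \lesssim h_E^{-1}\|w\|_{L^2(T)}^2 + h_E|w|_{1,T}^2$ at the appropriate step, and then repeating the argument above.

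The main obstacle will be the production of the $h_T\,\epsilon^{-1/2}$ weight on the pure stability estimates \eqref{eq:SZ3}--\eqref{eq:SZ4}: ordinary $L^2$-stability of $R_h$ furnishes only $\|R_h v\|_{L^2(T)} \lesssim \|v\|_{L^2(\widetilde{w}_T)}$, with no $h_T$ factor in sight. Extracting that extra $h_T$ will hinge on a careful patch argument (or equivalently on an inverse-type inequality applied to the finite element image $R_h v$ paired with the $H^1$-stability of $R_h$) and on tracking the reference-element constants so that $C_3,\dots,C_6$ remain independent of $\bm{u}$ and $\epsilon$.
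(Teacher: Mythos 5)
Your treatment of \eqref{eq:SZ1} and \eqref{eq:SZ2} coincides with the paper's: the classical Scott--Zhang estimates $\|v-R_hv\|_{L^2(T)}\lesssim h_T|v|_{1,\widetilde{w}_T}$ and $\|v-R_hv\|_{L^2(E)}\lesssim h_E^{1/2}|v|_{1,\widetilde{w}_E}$, followed by $|v|_{1,\omega}\le\epsilon^{-1/2}\|v\|_{\epsilon,\omega}$. Nothing to add there.

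For \eqref{eq:SZ3} and \eqref{eq:SZ4}, however, the obstacle you flag at the end is not a technicality you can engineer away, and your proposed fixes do not close it. A local Poincar\'e--Friedrichs inequality on the patch $\widetilde{w}_T$ requires $v$ to vanish on $\partial\widetilde{w}_T$ (or to have zero mean there); membership in $H_0^1(\Omega)$ gives neither for an interior patch. An inverse inequality applied to $R_hv$ points the wrong way ($|R_hv|_{1,T}\lesssim h_T^{-1}\|R_hv\|_{L^2(T)}$, not the reverse). Indeed, as stated the bound cannot hold with $C_5$ independent of $h_T$ and $\epsilon$: take $v\in H_0^1(\Omega)$ with $v\equiv 1$ on a neighbourhood of an interior patch $\widetilde{w}_T$ (including the edges/simplices defining the Scott--Zhang nodal values there). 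Then $R_hv\equiv 1$ on $T$, so the left side of \eqref{eq:SZ3} equals $|T|^{1/2}$, while the right side is $C_5h_T\epsilon^{-1/2}|\widetilde{w}_T|^{1/2}\approx C_5h_T\epsilon^{-1/2}|T|^{1/2}$ by shape regularity, which forces $h_T\gtrsim\epsilon^{1/2}$. What the Scott--Zhang construction actually yields via the H\"older and trace steps is $\|R_hv\|_{L^2(T)}\lesssim\|v\|_{L^2(\widetilde{w}_T)}+h_T|v|_{1,\widetilde{w}_T}\lesssim\left(1+h_T\epsilon^{-1/2}\right)\|v\|_{\epsilon,\widetilde{w}_T}$, i.e.\ a weight $\max\{1,h_T\epsilon^{-1/2}\}$ rather than $h_T\epsilon^{-1/2}$. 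Be aware that the paper's own justification of \eqref{eq:SZ3}--\eqref{eq:SZ4} is a single sentence (``proved similarly with H\"older inequality and trace theorem'') that likewise does not produce the extra factor of $h_T$; so the step you are stuck on is a real defect of the statement, not of your argument, and it can only be repaired by weakening the weight to $\max\{1,h_T\epsilon^{-1/2}\}$ (with the change propagated into the bound on $\|\bm{R}_h^*\widetilde{\bm{F}}(\bm{u}_h)\|_{-\epsilon,\Omega}$ in Theorem \ref{th2}) or by assuming $h_T\gtrsim\epsilon^{1/2}$.
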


\begin{proof}
%Let $\varphi_{i}$ be the nodal basis function of a vertex $x_{i} \in \mathcal{N}_{h} .$ That is $\varphi_{i} \in V_{h}$ and
%$\varphi_{i}\left(x_{j}\right)=\delta_{i}^{j}$ for any $x_{j} \in \mathcal{N}_{h}$ where $\delta_{i}^{j}$ is the Kronecker symbol. Given $x_{i} \in \mathcal{N}_{h},$ let $E_{i, j}$ be an edge with vertices $x_{i}$ and $x_{j} .$ The choice of $E_{i, j}$ is
%not unique, but if $x_{i} \in \partial \Omega,$ we take $E_{i, j} \subset \partial \Omega .$ Let $\theta_{i}=h_{E_{i,j}}^{-1}\left(4 \varphi_{i}-2 \varphi_{j}\right) .$ Direct
%computation shows
%$\int_{E_{i, j}} \theta_{i} \varphi_{i}ds=1$ and $\int_{E_{i, j}} \theta_{i} \varphi_{j}ds=0$.
%The Scoot Zhang interpolation $R_{h}$ on $\Omega$\cite{Scootzhang}
%is then defined by
%\begin{equation}
%R_{h} v=\sum_{x_{i} \in \mathcal{T}_{h}}\left(R_{i} v\right) \varphi_{i}, \quad \text { with } R_{i} v :=\int_{E_{i, j}} v \theta_{i}ds
%\end{equation}

{Inequalities \eqref{eq:SZ1} and \eqref{eq:SZ2}  follow the estimation of Scoot-Zhang interpolation in~\cite{Scootzhang} and $|v|_{1,\Omega}\leq \epsilon^{-1/2} \|v\|_{\epsilon,T}$}. Inequalities \eqref{eq:SZ3} and \eqref{eq:SZ4} are proved similarly with $H\ddot{o}lder$ inequality and trace theorem.

\end{proof}
%With
%\begin{align}
%|R_iv|&=|\int_{E_{i, j}} v \theta_{i}ds|\lesssim h_{E_{i,j}}^{-1}|\int_{E_{i, j}} v ds|\lesssim h_{E_{i,j}}^{-1}h_{E_{i,j}}^{1/2}\|v\|_{L^2(E_{i,j})}\label{eq:sz1-1}\\
%&\lesssim   h_{E_{i,j}}^{-1}h_{E_{i,j}}^{1/2}h_{E_{i,j}}^{1/2}|v|_{1,\Omega_i}\lesssim
%\epsilon^{-1/2}\|v\|_{\epsilon,\Omega_i },\nonumber
%\end{align}
%we have
%% h_{E_{i,j}}^{-1}h_{E_{i,j}}^{1/2} h_{E_{i,j}}^{1/2}
%\begin{align}
%\|R_hv\|_{L^2(T)}&\leq \sum_{x_{i} \in T}\|R_iv\|_{L^2(T)}\leq \sum_{x_{i} \in T}h_T|R_iv|\lesssim\sum_{x_{i} \in T}h_T\epsilon^{-1/2}\|v\|_{\epsilon,\Omega_i }\\
%&\lesssim h_T\epsilon^{-1/2}  \sum_{x_{i} \in T}\|v\|_{\epsilon,\Omega_i }
%\lesssim h_T\epsilon^{-1/2}\|v\|_{\epsilon,\widetilde{w}_T}   \nonumber
%\end{align}
%and
%\begin{align}
%\|R_hv\|_{L^2(E)}&\leq \sum_{x_{i} \in E}\|R_iv\|_{L^2(E)}\leq \sum_{x_{i} \in E }h_E^{1/2}|R_iv|
%\lesssim\sum_{x_{i} \in E}h_E^{1/2}\epsilon^{-1/2}\|v\|_{\epsilon,\Omega_i }\\
%&\lesssim h_E^{1/2}\epsilon^{-1/2}  \sum_{x_{i} \in E}\|v\|_{\epsilon,\Omega_i }
%\lesssim h_E^{1/2}\epsilon^{-1/2}\|v\|_{\epsilon,\widetilde{w}_E}\nonumber
%\end{align}
%where the second and third inequalities in \eqref{eq:sz1-1} are based on $H\ddot{o}lder$ inequality and trace theorem respectively. Thus,
%inequalities \eqref{eq:SZ3} and \eqref{eq:SZ4} can be proved.
%\end{proof}

\begin{lemma}\label{paopao}
(Bubble function space~\cite{Oden,Verfuhrt})
$\forall {u}\in \mathcal{P}_1(T)$ and $\forall {w} \in \mathcal{P}_1(E)$ where  $T\in\mathcal{T}_h$ and $E\in\mathcal{I}_{h}$,
\begin{align}
&{\widetilde{C}_{1}\|{{u}}\|_{L^2(T)}\leq \|b_Tu\|_{L^2(T)}\leq \|{{u}}\|_{L^2(T)}}\label{paopao0}\\
%&{ {\widetilde{C}_{2}}\|w\|_{L^p(E)}\leq \|b_Ew\|_{L^p(E)}\leq \|w\|_{L^p(E)}.}%\label{paopao001}\\
& \widetilde{C}_2\|{{u}}\|_{L^2(T)}\leq \sup\limits_{{{v}}\in\mathcal{P}_1(T)}\frac{\int_T {{u}}b_T{{v}}d\bm{x}}{\|{{v}}\|_{L^2{(T)}}}\leq \|{{u}}\|_{L^2(T)}, \label{paopao1}\\
& \widetilde{C}_3\|w\|_{L^2(E)}\leq \sup\limits_{{{\tau}}\in\mathcal{P}_1(E)}\frac{\int_E w b_EP{{\tau}}ds}{\|{{\tau}}\|_{L^{2}(E)}}\leq \|w\|_{L^2(E)}, \label{paopao2}\\
&\widetilde{C}_4 h_T^{-1}\|b_T {{u}}\|_{L^2(T)}\leq\|\nabla(b_T {{u}})\|_{L^2(T)}\leq \widetilde{C}_5 h_T^{-1}\|b_T {{u}}\|_{L^2(T)},\label{paopao3}\\
&\widetilde{C}_6h_T^{-1}\|b_{E}Pw\|_{L^2(T)}\leq\|\nabla(b_{E}Pw)\|_{L^2(T)}\leq \widetilde{C}_{7}h_T^{-1}\|b_{E}Pw\|_{L^2(T)},	\label{paopao4}\\
& \|\nabla{{(b_T u)}}\|_{L^2(T)}\leq {\widetilde{C}_8 }\|u\|_{1,T}, \label{paopao00}\\
& \|b_{E}{Pw}\|_{L^2(T)}\leq \widetilde{C}_{9}h_T^{{1}/{2}}\|w\|_{L^2(E)},	\label{paopao5}
%\\
%& \|\nabla{{(b_E Pw)}}\|_{L^2(T)}\leq {{C_{10} }}\|Pw\|_{1,T},\label{paopao01}
\end{align}
{where constants $\widetilde{C}_1,..., \widetilde{C}_{9}$} depend on the reference element and regular partition only.
\end{lemma}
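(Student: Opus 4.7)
The plan is to reduce every inequality to the reference triangle $\hat{T}$ (with vertices at $(0,0),(1,0),(0,1)$) and a reference edge $\hat{E}$, and then to exploit norm equivalence on finite-dimensional polynomial spaces. Let $\hat{b}_T = 27\hat{\lambda}_1\hat{\lambda}_2\hat{\lambda}_3$ be the reference cubic bubble and let $\hat{b}_E$ be the corresponding reference edge bubble. For each $T\in\mathcal{T}_h$ there is an affine bijection $F_T:\hat{T}\to T$ with Jacobian $B_T$ satisfying $|B_T|\simeq h_T$, $|B_T^{-1}|\simeq h_T^{-1}$, $|\det B_T|\simeq h_T^2$ under shape-regularity, so that for any $v$ defined on $T$ with pullback $\hat{v}=v\circ F_T$ one has $\|v\|_{L^2(T)}\simeq h_T\|\hat{v}\|_{L^2(\hat{T})}$ and $|v|_{1,T}\simeq|\hat{v}|_{1,\hat{T}}$. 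Edge norms scale by $h_T^{1/2}$.

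For the pointwise bounds in \eqref{paopao0}, the upper bound is immediate from $0\leq b_T\leq 1$, while the lower bound is a consequence of the fact that $\hat{u}\mapsto\|\hat{b}_T\hat{u}\|_{L^2(\hat{T})}$ and $\hat{u}\mapsto\|\hat{u}\|_{L^2(\hat{T})}$ are two norms on the finite-dimensional space $\mathcal{P}_1(\hat{T})$, hence equivalent; scaling back to $T$ introduces the same factor $h_T$ on both sides and yields the uniform constant. For the sup-type characterizations \eqref{paopao1} and \eqref{paopao2}, I would view the bilinear forms $a_T(u,v)=\int_T u\, b_T v\, d\bm{x}$ and $a_E(w,\tau)=\int_E w\, b_E P\tau\, ds$ as inner products (symmetric and positive definite) on the reference polynomial spaces $\mathcal{P}_1(\hat{T})$ and $\mathcal{P}_1(\hat{E})$, so their induced norms are equivalent to $\|\cdot\|_{L^2}$; the inf-sup lower bounds are then realized by taking the test function equal to the argument, and scaling transfers the inequalities without loss.

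The inverse-type inequalities \eqref{paopao3} and \eqref{paopao4} are pure finite-dimensional norm equivalence on the reference spaces $\hat{b}_T\mathcal{P}_1(\hat{T})$ and $\hat{b}_{\hat{E}}P\mathcal{P}_1(\hat{E})$: on $\hat{T}$ both $\|\cdot\|_{L^2(\hat{T})}$ and $|\cdot|_{1,\hat{T}}$ are norms on these finite-dimensional subspaces, and pulling back to $T$ yields precisely the factor $h_T^{-1}$ from the gradient rescaling while the $L^2$ rescalings cancel. Inequality \eqref{paopao00} is likewise a statement that $\hat{u}\mapsto\hat{\nabla}(\hat{b}_T\hat{u})$ is a bounded linear map on the reference $\mathcal{P}_1(\hat{T})$. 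Finally, \eqref{paopao5} follows from the reference bound $\|\hat{b}_{\hat{E}}P\hat{w}\|_{L^2(\hat{T})}\lesssim\|\hat{w}\|_{L^2(\hat{E})}$ combined with the scalings $\|\cdot\|_{L^2(T)}\simeq h_T\|\cdot\|_{L^2(\hat{T})}$ and $\|\cdot\|_{L^2(E)}\simeq h_T^{1/2}\|\cdot\|_{L^2(\hat{E})}$, which together produce the $h_T^{1/2}$ factor.

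The one point that deserves genuine care, and which I expect to be the main obstacle, is the choice of the continuation operator $P:L^\infty(E)\to L^\infty(T)$: the scaling argument only delivers mesh-uniform constants in \eqref{paopao2}, \eqref{paopao4}, and \eqref{paopao5} if $P$ commutes with affine pullbacks, so that the reference and physical continuation operators are related by $\widehat{Pw}=\hat{P}\hat{w}$. The natural choice is to let $P$ extend $w$ linearly in the direction transverse to $E$ inside each of the two triangles adjacent to $E$; this choice is affine-invariant and yields constants depending only on the reference configuration. Once this is fixed, the entire lemma is a direct assembly of the scaling identities with norm equivalence on the relevant finite-dimensional spaces.
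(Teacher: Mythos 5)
Your route is genuinely different from the paper's: the paper proves this lemma almost entirely by citation --- \eqref{paopao0} is attributed to Theorem~2.2 of~\cite{Oden}, and \eqref{paopao1}--\eqref{paopao4} together with \eqref{paopao5} to Lemma~5.1 of~\cite{Verfuhrt} --- and only \eqref{paopao00} receives an explicit argument, namely a direct product-rule expansion of $\nabla(b_Tu)$ on the physical element followed by the bounds $|b_T|<1$ and $\max_{\bm{x}\in T}\{(\partial_xb_T)^2,(\partial_yb_T)^2,1\}$. You instead reconstruct everything from scratch via affine pullback to the reference element and norm equivalence on finite-dimensional polynomial spaces, which is precisely the mechanism underlying the cited results; your observation that the constants in \eqref{paopao2}, \eqref{paopao4} and \eqref{paopao5} are mesh-uniform only if the continuation operator $P$ commutes with the affine maps is a genuine point the paper does not address. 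What the citation approach buys is brevity; what yours buys is a self-contained, checkable argument, and for the six estimates \eqref{paopao0}--\eqref{paopao4} and \eqref{paopao5} it is correct.

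The one place where your argument has a gap is \eqref{paopao00}. The $H^1$-norm on the right-hand side does not scale homogeneously under $F_T$: transferring the reference bound $|\hat b_{\hat T}\hat u|_{1,\hat T}\lesssim\|\hat u\|_{1,\hat T}$ back to $T$ yields $\|\nabla(b_Tu)\|_{L^2(T)}\lesssim h_T^{-1}\|u\|_{L^2(T)}+|u|_{1,T}$, not $\lesssim\|u\|_{1,T}$ with a uniform constant, so ``bounded linear map on the reference element'' does not finish the proof. This is not a defect you introduced: the paper's own computation produces $\widetilde C_8^2=4\max_{\bm{x}\in T}\{(\partial_xb_T)^2,(\partial_yb_T)^2,1\}$, and since $|\nabla b_T|\sim h_T^{-1}$ that constant is likewise not mesh-independent. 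Indeed, taking $u\equiv1$ shows the uniform-constant version of \eqref{paopao00} is false: $\|\nabla b_T\|_{L^2(T)}\sim 1$ while $\|1\|_{1,T}\sim h_T$. You should therefore either state \eqref{paopao00} with an explicit factor $h_T^{-1}$ on the right-hand side or record that $\widetilde C_8$ degenerates as $h_T\to0$; the rest of your scaling argument stands.
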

\begin{proof}
The inequality \eqref{paopao0} is from Theorem 2.2 in Ref. \cite{Oden}. Inequalities (\ref{paopao1}-\ref{paopao4}) and \eqref{paopao5} %\eqref{paopao2},\eqref{paopao3}, \eqref{paopao4}, \eqref{paopao5}
are given in Lemma 5.1 of Ref. \cite{Verfuhrt}. The proof of inequality (\ref{paopao00}) is presented as follows,
\begin{align}
\|\nabla(b_Tu)\|_{L^2(T)}^2
=&\int_T\Big[ (\partial_x{b_{T}} u+b_T\partial_xu)^2+  (\partial_y{b_{T}}u+b_T\partial_yu)^2\Big]d{\bm x} \nonumber\\
\leq & 2\int_T\Big[ (\partial_x{b_{T}}u)^2+(b_T\partial_xu)^2+  (\partial_y{b_{T}}u)^2+(b_T \partial_y u)^2\Big]d{\bm x}\nonumber\\
\leq &2\max\limits_{\bm{x}\in T}\left\{(\partial_x{b_{T}})^2,({\partial_yb_{T}})^2,1 \right \}\int_T[2 u^2+(\partial_xu)^2+(\partial_yu)^2]d{\bm x}\nonumber\\
\leq & \widetilde{C}_8^2\|u\|_{1,T}^2,	
\end{align}
where $|b_T|<1$ is applied for showing the second inequality.
%Similarly, with  $|b_E|<1$, we have
%\begin{equation}
%\|\nabla(b_EPw)\|_{L^2(T)}\leq C_{10}\|Pw\|_{1,T}.
%\end{equation}
\end{proof}

\begin{theorem}\label{th2}
{There exists  a {constant $C_7$} independent of $\bm{u}_h$, $\bm{f}$ and $\epsilon$,}
%\textcolor{red}{ (this is not right, C should depend on $||R_h||$, how to state? I think we can put Thm 3.10 and 3.11 together, i.e., combine (3.32) and (3.42))(the two proof process is independent and the present form is the usual representation. Thus, I think it is nice to write them separately.)}
such that,
\begin{equation}	\label{fuh_eta}
\|{\bm{F}}({\bm{u}_h})\|_{{-\epsilon},\Omega}\leq C_7{(}\eta +\varepsilon{)},
\end{equation}
where the estimator $\eta:= \left(\sum\limits_{T\in\mathcal{T}_h} \eta_T^2+ \sum\limits_{E\in \mathcal{I}_h} \eta_E^2 \right)^{{1}/{2}}$ and the oscillation term $\varepsilon:= \left(\sum\limits_{T\in\mathcal{T}_h} \varepsilon_T^2\right)^{{1}/{2}}$, with
%\begin{align}
%&\eta_T^2:= h_{T}^2\left(|| \nabla\cdot(p_h\nabla \psi_h) +f_{T,1}||_{L^2(T)}^2 + || \nabla\cdot(n_h\nabla\psi_h)-f_{T,2} ||_{L^2(T)}^2\right.\\
%&+|| n_h-p_h-f_{T,3}||_{L^2(T)}^2)+\sum_{E\in \mathcal{I}_h \cap \partial T }h_E(||  [ \nabla p_h\cdot \bm{n}_E]+[p_h\nabla\psi_h\cdot \bm{n}_E] \parallel_{{L^2(E)}}^2 \nonumber\\
%&\left.+|| [ \nabla n_h \cdot \bm{n}_E]-[n_h\nabla\psi_h\cdot \bm{n}_E]||_{{L^2(T)}}^2+|| [\nabla\psi_h\cdot \bm{n}_E] ||_{{L^2(E)}}^2\right),	\nonumber
%\end{align}
%\begin{align}
%\eta_T^2:=& h_{T}^2\left(|| \nabla\cdot(p_h\nabla \psi_h) +f_{T,1}||_{L^2(T)}^2 + || \nabla\cdot(n_h\nabla\psi_h)-f_{T,2} ||_{L^2(T)}^2\right.\nonumber\\
%&\left.+|| n_h-p_h-f_{T,3}||_{L^2(T)}^2\right),  \nonumber\\
%\eta_E^2:= &h_E\Big(||  [ \nabla p_h\cdot \bm{n}_E]+[p_h\nabla\psi_h\cdot \bm{n}_E] \parallel_{{L^2(E)}}^2+|| [ \nabla n_h \cdot \bm{n}_E]-[n_h\nabla\psi_h\cdot \bm{n}_E]||_{{L^2(T)}}^2 \nonumber\\
%&+|| [\nabla\psi_h\cdot \bm{n}_E] ||_{{L^2(E)}}^2\Big),	\nonumber\\
%	\varepsilon^2_T:= &h_{T}^2\left(||f_1-f_{T,1}||_{L^2(T)}^2+||f_2-f_{T,2}||_{L^2(T)}^2
%+||f_3-f_{T,3}||_{L^2(T)}^2\right).\nonumber
%\end{align}
\begin{align}
\eta_T^2:=& {h_T^2\epsilon^{-1}}\left(|| \nabla\cdot(p_h\nabla \psi_h) +f_{T,1}||_{L^2(T)}^2 + || \nabla\cdot(n_h\nabla\psi_h)-f_{T,2} ||_{L^2(T)}^2\right.\nonumber\\
&\left.+|| n_h-p_h-f_{T,3}||_{L^2(T)}^2\right),  \nonumber\\
\eta_E^2:= &{h_E\epsilon^{-1}}\left(||  [ \nabla p_h\cdot \bm{n}_E]+[p_h\nabla\psi_h\cdot \bm{n}_E] \parallel_{{L^2(E)}}^2+|| [ \nabla n_h \cdot \bm{n}_E]-[n_h\nabla\psi_h\cdot \bm{n}_E]||_{{L^2(E)}}^2\right. \nonumber\\
&\left.+|| [\nabla\psi_h\cdot \bm{n}_E] ||_{{L^2(E)}}^2\right),	\nonumber\\
	\varepsilon^2_T:= &{h_T^2\epsilon^{-1}}\sum\limits_{i=1}^3||f_i-f_{T,i}||_{L^2(T)}^2.\nonumber
\end{align}
\end{theorem}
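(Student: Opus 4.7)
The plan is to use the standard residual argument: insert the Scott-Zhang interpolant, invoke Galerkin orthogonality, integrate by parts element-wise, separate the data oscillation, and sum by Cauchy--Schwarz with the interpolation estimates \eqref{eq:SZ1}--\eqref{eq:SZ2}. The $\epsilon$-robustness requires nothing more than the $\epsilon^{-1/2}$ factors already built into the Scott-Zhang bounds, but the weights inside $\eta_T$ and $\eta_E$ must absorb them consistently.

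First I fix an arbitrary $\bm v=(v_1,v_2,v_3)\in (H_0^1(\Omega))^3$ and set $R_h\bm v:=(R_h v_1,R_h v_2,R_h v_3)\in (V_h)^3$. Since $\bm u_h$ solves \eqref{weakequation-1}, Galerkin orthogonality yields $<\bm F(\bm u_h),R_h\bm v>=0$, so $<\bm F(\bm u_h),\bm v>=<\bm F(\bm u_h),\bm v-R_h\bm v>$. I then integrate by parts triangle by triangle on the gradient terms that appear in the definition of $\bm F(\bm u_h)$ (cf.\ \eqref{posteriori1}). Because $p_h$, $n_h$ and $\psi_h$ are piecewise linear, $\Delta p_h=\Delta n_h=\Delta\psi_h=0$ inside every element, leaving only divergences of the convective fluxes and normal-flux jumps across interior edges; the boundary contribution on $\partial\Omega$ is killed by $\bm v-R_h\bm v\in (H_0^1(\Omega))^3$. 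Adding and subtracting $f_{T,i}$ on each $T$, this produces three groups: element residuals featuring exactly the quantities $\nabla\cdot(p_h\nabla\psi_h)+f_{T,1}$, $\nabla\cdot(n_h\nabla\psi_h)-f_{T,2}$ and $n_h-p_h-f_{T,3}$ that appear inside $\eta_T$; interior-edge jumps featuring $[\nabla p_h\cdot\bm n_E]+[p_h\nabla\psi_h\cdot\bm n_E]$, its $n_h$-analogue, and $\epsilon\,[\nabla\psi_h\cdot\bm n_E]$ that appear inside $\eta_E$; and the data-oscillation remainders $\sum_T\int_T(f_{T,i}-f_i)(v_i-R_h v_i)\,d\bm x$.

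Each group I estimate by Cauchy--Schwarz combined with \eqref{eq:SZ1}--\eqref{eq:SZ2}, producing factors $h_T\epsilon^{-1/2}\|v_i\|_{\epsilon,\widetilde w_T}$ and $h_E^{1/2}\epsilon^{-1/2}\|v_i\|_{\epsilon,\widetilde w_E}$ on the test side, which are precisely the square roots of the $h_T^2\epsilon^{-1}$ and $h_E\epsilon^{-1}$ weights defining $\eta_T^2$ and $\eta_E^2$. The one delicate spot is the Poisson jump $\epsilon\,[\nabla\psi_h\cdot\bm n_E]$, where the coefficient $\epsilon$ combines with the $\epsilon^{-1/2}$ from interpolation into $\epsilon^{1/2}h_E^{1/2}\|[\nabla\psi_h\cdot\bm n_E]\|_{L^2(E)}$; the standing assumption $\epsilon\in(0,1]$, which gives $\epsilon^{1/2}\le\epsilon^{-1/2}$, lets me absorb this into the same $\epsilon^{-1}$-weight that defines $\eta_E$. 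Summing over $T\in\mathcal T_h$ and $E\in\mathcal I_h$, a discrete Cauchy--Schwarz step together with the uniformly bounded overlap of the Scott-Zhang patches $\widetilde w_T$ and $\widetilde w_E$ delivers $|<\bm F(\bm u_h),\bm v>|\le C_7(\eta+\varepsilon)\|\bm v\|_{\epsilon,\Omega}$; dividing by $\|\bm v\|_{\epsilon,\Omega}$ and taking the supremum yields \eqref{fuh_eta}.

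The main obstacle I anticipate is the bookkeeping of $\epsilon$-weights so that $C_7$ is genuinely independent of $\epsilon$: the Poisson block produces an extra factor $\epsilon$ in front of its jump contribution whereas the two Nernst-Planck blocks produce none, yet a single estimator $\eta_E$ must dominate both. The reconciliation of the two scalings rests squarely on the restriction $\epsilon\le 1$, together with the deliberate choice of the $\epsilon$-weighted dual norm $\|\cdot\|_{-\epsilon,\Omega}$ inherited from \cite{verfurth1998robust}.
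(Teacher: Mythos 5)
Your proposal is correct, but it takes a shorter route than the paper. You invoke Galerkin orthogonality ($<\bm F(\bm u_h),R_h\bm v>=0$ since $R_h\bm v\in (V_h)^3$) to reduce everything to $<\bm F(\bm u_h),\bm v-R_h\bm v>$, and then you only need the approximation estimates \eqref{eq:SZ1}--\eqref{eq:SZ2}. The paper instead follows Verf\"urth's abstract framework and splits $<\bm F(\bm u_h),\bm\phi>$ into \emph{four} terms, $(\bm{\textup{Id}_Y}-\bm R_h)^*\widetilde{\bm F}(\bm u_h)$, $(\bm{\textup{Id}_Y}-\bm R_h)^*(\bm F-\widetilde{\bm F})$, $\bm R_h^*\widetilde{\bm F}(\bm u_h)$ and $\bm R_h^*(\bm F-\widetilde{\bm F})$, and bounds the two $\bm R_h^*$ terms separately using the stability estimates \eqref{eq:SZ3}--\eqref{eq:SZ4} rather than letting them cancel. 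Under exact Galerkin orthogonality the third and fourth terms sum to zero, so your argument is legitimate and avoids needing \eqref{eq:SZ3}--\eqref{eq:SZ4} at all; what the paper's longer version buys is robustness when the discrete equations are not solved exactly (as happens with the two-grid iteration actually used in Section 4), since then $<\bm F(\bm u_h),R_h\bm\phi>$ need not vanish. One point in your favor: you explicitly track the factor $\epsilon$ that integration by parts places in front of the Poisson flux jump, absorbing $\epsilon^{1/2}h_E^{1/2}\|[\nabla\psi_h\cdot\bm n_E]\|_{L^2(E)}$ into the $\epsilon^{-1/2}$-weighted term of $\eta_E$ via $\epsilon\le 1$; the paper's displayed computation silently drops this $\epsilon$, so your bookkeeping is actually the more careful of the two. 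Both arguments yield the same constant-independence conclusion.
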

\begin{proof}
{We define $\bm{R}_h \bm{\phi}:=(R_h\phi_1,R_h\phi_2,R_h\phi_3)$, $\forall {\bm{\phi}}=(\phi_1,\phi_2,\phi_3)\in (H_0^1(\Omega))^3$, with $R_h$ a Scott-Zhang interpolation and have $\bm{R}_h \in \mathcal{L}((H_0^1(\Omega))^3,(V_h)^3)$. With $ {\bm{Y}}=(H_0^1(\Omega))^3$, we denote by ${\bm{Y}}_h^*$,${\bm{R}_h^*}$ and $(\bm{\textup{Id}_{Y}}-\bm{R}_h)^*$ the dual spaces of ${\bm{Y}}_h= (V_h)^3$, the dual operator of ${\bm{R}_h}$ and the dual operator of $(\bm{\textup{Id}_{Y}}-\bm{R}_h)$, respectively. {$\mathcal{L}({\bm{Y}},{\bm{Y}}_h)$ is a Banach space of continuous linear maps of $\bm{Y}$ in $\bm{Y}_h$}.
Following the strategy in Ref.~\cite{Verfuhrt}}, we decompose the inner product into four parts as follows,
\begin{align} <{\bm{F}}({\bm{u}_h}),{\bm{\phi}}>=&<{\bm{\widetilde{F}}}({\bm{u}_h}),{\bm{\phi}}-\bm{R}_h{\bm{\phi}}>+<{\bm{F}}({\bm{u}_h})
-{\bm{\widetilde{F}}}({\bm{u}_h}),{\bm{\phi}}-\bm{R}_h{\bm{\phi}}> \nonumber\\
&\ +<{\bm{\widetilde{F}}}({\bm{u}_h}),\bm{R}_h{\bm{\phi}}>+<{\bm{F}}({\bm{u}_h})-{\bm{\widetilde{F}}}({\bm{u}_h}),\bm{R}_h{\bm{\phi}}>,
\end{align}
and thus,
\begin{align}
	\|{\bm{F}}({\bm{u}_h})\|_{{-\epsilon},\Omega} %\nonumber\\
	\leq &\|(\bm{\textup{Id}_{Y}}-\bm{R}_h)^*{\bm{\widetilde{F}}}({\bm{u}_h})\|_{{-\epsilon},\Omega}+\|(\textup{Id}_{\bm{Y}}-\bm{R}_h)^*({\bm{F}}({\bm{u}_h})-{\bm{\widetilde{F}}}({\bm{u}_h}))\|_{{-\epsilon},\Omega}\nonumber\\
&+{\|\bm{R}_h^*({\bm{\widetilde{F}}}({\bm{u}_h}))\|_{{-\epsilon},\Omega}}+
{\|\bm{R}_h^* ({\bm{F}}({\bm{u}_h})
-{\bm{\widetilde{F}}}({\bm{u}_h}))\|_{{-\epsilon},\Omega}}. \label{eq:Fuh1}
\end{align}
The four terms on the right hand side (RHS) of the inequality \eqref{eq:Fuh1} are discussed one by one in the following.%\|\bm{R}_h\|_{\mathcal{L}({\bm{Y}},{\bm{Y}}_h)}

The first term is
\begin{align}
&\parallel  (\bm{Id_Y}-\bm{R_h})^*{\bm{\widetilde{F}}}({\bm{u}_h})\parallel_{{-\epsilon},\Omega} \nonumber \\
=&\sup\limits_{{\bm{\phi}}\in {\bm{Y}},\|{\bm{\phi}}\|_{\epsilon,\Omega}=1}{ \Big|} \sum\limits_{{T\in\mathcal{T}_h}}\Big{\{}\int_T [(-\Delta p_h-\nabla\cdot(p_h\nabla \psi_h)-f_{T,1})(\phi_1-R_h\phi_{1})	\nonumber\\
&+(-\Delta n_h+\nabla\cdot(n_h\nabla\psi_h)-f_{T,2})(\phi_2-R_h\phi_{2})	\nonumber\\
&+(-\Delta\psi_h-p_h+n_h-f_{T,3})(\phi_3-R_h\phi_{3})] d{\bm x} \nonumber\\
&+\sum\limits_{E\in \mathcal{I}_h \cap \partial T}\int_{E}[([\nabla p_h\cdot {\bm n}_E]+[ p_h\nabla\psi_h\cdot \bm{n}_E])(\phi_1-R_h\phi_{1}) 	\nonumber\\
&+([\nabla n_h\cdot \bm{n}_E]-[ n_h\nabla\psi_h \cdot \bm{n}_E])(\phi_2-R_h\phi_{2})+[\nabla\psi_h\cdot \bm{n}_E](\phi_3-R_h\phi_{3})  ]ds\Big{\}}{ \Big|} 	\nonumber\\
%\lesssim &\sup\limits_{{\bm{\phi}}\in {\bm{Y}},\|{\bm{\phi}}\|_{1,\Omega}=1}\Big{\{} \sum\limits_{{T\in\mathcal{T}_h}}\int_T [(-\Delta p_h-\nabla\cdot(p_h\nabla \psi_h)-f_{T,1})(\phi_1-R_h\phi_{1})	 \nonumber\\
%&+(-\Delta n_h+\nabla\cdot(n_h\nabla\psi_h)-f_{T,2})(\phi_2-R_h\phi_{2})	\nonumber\\
%&+(-\Delta\psi_h-p_h+n_h-f_{T,3})(\phi_3-R_h\phi_{3})]d{\bm x} \nonumber\\
%&+\sum\limits_{E\in \mathcal{I}_h }\int_{E}[([\nabla p_h\cdot {\bm n}_E]+[ p_h\nabla\psi_h\cdot \bm{n}_E])(\phi_1-R_h\phi_{1}) 	\nonumber\\
%&+([\nabla n_h\cdot \bm{n}_E]-[ n_h\nabla\psi_h \cdot \bm{n}_E])(\phi_2-R_h\phi_{2})+[\nabla\psi_h\cdot \bm{n}_E](\phi_3-R_h\phi_{3})  ]ds\Big{\}} 	\nonumber\\
\lesssim &  \sup\limits_{{\bm{\phi}} \in {\bm{Y}},\|{\bm{\phi}}\|_{\epsilon,\Omega}=1}\Big{\{} \sum\limits_{{T\in\mathcal{T}_h}}h_T\epsilon^{-1/2}
\left[\| \nabla\cdot(p_h\nabla \psi_h) +f_{T,1}\|_{L^2(T)}\|\phi_1\|_{\epsilon,\widetilde{w}_{T}}	\right.\nonumber\\
&\left. +\| \nabla\cdot (n_h\nabla\psi_h) -f_{T,2}\|_{L^2(T)}\|\phi_2\|_{\epsilon,\widetilde{w}_{T}}+\|  n_h-p_h-f_{T,3}\|_{L^2(T)}\|\phi_3\|_{\epsilon,\widetilde{w}_{T}} \right] \nonumber\\
&+\sum\limits_{E\in \mathcal{I}_h}h_E^{1/2}\epsilon^{-1/2}\left({\|  [\nabla p_h \cdot \bm{n}_E]+[p_h\nabla\psi_h\cdot \bm{n}_E] \|_{L^2(E)}}\|\phi_1\|_{\epsilon,\widetilde{w}_{E}} \right.\nonumber\\
 &\left.+\parallel [\nabla n_h \cdot \bm{n}_E]-[n_h\nabla\psi_h\cdot \bm{n}_E] \parallel_{L^2(E)}
\|\phi_2\|_{\epsilon,\widetilde{w}_{E}}+\parallel [\nabla\psi_h\cdot \bm{n}_E] \parallel_{L^2(E)}\|\phi_3\|_{\epsilon,\widetilde{w}_{E}} \right)
\Big{\}}  \nonumber\\
\lesssim &  \sup\limits_{{\bm{\phi}} \in {\bm{Y}},\|{\bm{\phi}}\|_{\epsilon,\Omega}=1}\Big{\{} \sum\limits_{{T\in\mathcal{T}_h}}
\eta_T\left(\|\phi_1\|^2_{\epsilon,\widetilde{w}_{E}}+\|\phi_2\|^2_{\epsilon,\widetilde{w}_{E}}+\|\phi_3\|^2_{\epsilon,\widetilde{w}_{E}}\right)^{1/2}\nonumber\\
&+\sum\limits_{E\in \mathcal{I}_h}\eta_E\left(\|\phi_1\|^2_{\epsilon,\widetilde{w}_{E}}+\|\phi_2\|^2_{\epsilon,\widetilde{w}_{E}}+\|\phi_3\|^2_{\epsilon,\widetilde{w}_{E}}\right)^{1/2}
\Big{\}}  \nonumber\\
\lesssim & \eta,
\end{align}
where the first inequality is shown by combining inner edges of any triangle $T$ and using inequalities~\eqref{eq:SZ1} and \eqref{eq:SZ2}. Cauchy-Schwartz inequality is utilized for giving the second and final inequalities.

Similarly, the second term on the RHS of  \eqref{eq:Fuh1} is
\begin{align}
&\|(\bm{Id_Y}-\bm{R_h})^*[{\bm{F}}({\bm{u}_h})-{\bm{\widetilde{F}}}({\bm{u}_h})]\|_{{-\epsilon},\Omega}\nonumber \\
=&\sup\limits_{{\bm{\phi}}\in {\bm{Y}}, \|{\bm{\phi}}\|_{\epsilon,\Omega}=1}{ \Big|} \sum\limits_{T\in\mathcal{T}_h}\int_{T} \left[\left(f_1-f_{T,1}\right)\left(\phi_1-R_h\phi_1\right)
+(f_2-f_{T,2})(\phi_2-R_h\phi_2) \right.\nonumber\\
&\left.+(f_3-f_{T,3})(\phi_3-R_h\phi_3)\right]d{\bm x}{ \Big|} \nonumber\\
 \lesssim &  \sup\limits_{{\bm{\phi}}\in {\bm{Y}}, \|{\bm{\phi}}\|_{\epsilon,\Omega}=1} \sum\limits_{T\in\mathcal{T}_h}h_T\epsilon^{-1/2}\left(\|f_1-f_{T,1}\|_{L^2(T)}\|{{\phi}_1}\|_{\epsilon,\widetilde{w}_{T}}+
\|f_2-f_{T,2}\|_{L^2(T)}\|{{\phi}}_2\|_{\epsilon,\widetilde{w}_{T}} \right.\nonumber\\
&\left.+\|f_3-f_{T,3}\|_{L^2(T)}\|{{\phi}_3}\|_{\epsilon,\widetilde{w}_{T}}\right) \nonumber\\
\lesssim & \varepsilon .	 
\end{align}

In the third term on the RHS of \eqref{eq:Fuh1},
%\begin{align}
%&\|  {\bm{\widetilde{F}}}({\bm{u}_h})\|_{{\bm{Y}}_h^*} \label{Fyh}\\
%=&\sup\limits_{{\bm{\delta}}\in \bm{Y}_{h};\|{\bm{\delta}}\|_{1,\Omega}=1} { \Big|}\sum\limits_{T\in\mathcal{T}_h}\Big{\{}\int_{T}[(-\Delta p_h-\nabla\cdot(p_h\nabla \psi_h)-f_{T,1})\delta_1  \nonumber\\
%&	+(-\Delta n_h+\nabla\cdot(n_h\nabla\psi_h)-f_{T,2})\delta_2	+(-\Delta\psi_h-p_h+n_h-f_{T,3})\delta_3]d\bm{x}	 \nonumber\\
%&+\sum\limits_{E\in \mathcal{I}_h \cap \partial T }\int_{E} [\left([\nabla p_h\cdot \bm{n}_E]+[ p_h\nabla\psi_h\cdot \bm{n}_E]\right)\delta_1+([\nabla n_h\cdot \bm{n}_E]-[ n_h\nabla\psi_h \cdot \bm{n}_E])\delta_2 	 \nonumber\\
%& +[\nabla\psi_h\cdot \bm{n}_E]\delta_3] ds  \Big{\}}{ \Big|}		 \nonumber\\
%\lesssim&  \sup\limits_{{\bm{\delta}}\in \bm{Y}_{h};\|{\bm{\delta}}\|_{1,\Omega}=1}	\Big{\{}\sum\limits_{T\in\mathcal{T}_h}
%h_{T}(|| \nabla\cdot(p_h\nabla \psi_h )+f_{T,1}||_{L^2(T)}\|\delta_1\|_{1,T}	 \nonumber\\
%&+|| \nabla\cdot(n_h\nabla\psi_h) -f_{T,2} ||_{L^2(T)}\|\delta_2\|_{1,T}	+||n_h-p_h-f_{T,3}||_{L^2(T)}\|\delta_3\|_{1,T})		 \nonumber\\
%&	+ \sum\limits_{E\in \mathcal{I}_h }h_E^{\frac{1}{2}}(||[\nabla p_h \cdot \bm{n}_E]+[ p_h\nabla\psi_h\cdot \bm{n}_E] ||_{L^2(E)}\|\delta_1\|_{L^2(\widetilde{w}_E)}  \nonumber\\
%&	+ || [\nabla n_h\cdot \bm{n}_E]-[ n_h\nabla\psi_h\cdot \bm{n}_E] ||_{L^2(E)}
%\|\delta_2\|_{L^2(\widetilde{w}_E)}	 + \|[\nabla\psi_h\cdot \bm{n}_E] \|_{L^2(E)}\|\delta_3\|_{L^2(\widetilde{w}_E)} )\Big{\}}		 \nonumber\\
%\lesssim &  \eta,	\nonumber
%\end{align}
\begin{align}
&\|\bm{R}_h^*({\bm{\widetilde{F}}}({\bm{u}_h}))\|_{{-\epsilon},\Omega} \nonumber\\
=&\sup\limits_{{{\bm{\phi}}\in \bm{Y};\|{\bm{\phi}}\|_{\epsilon,\Omega}=1}} { \Big|}\sum\limits_{T\in\mathcal{T}_h}\Big{\{}\int_{T}[(-\Delta p_h-\nabla\cdot(p_h\nabla \psi_h)-f_{T,1}){R_h\phi_1}  \nonumber\\
&	+(-\Delta n_h+\nabla\cdot(n_h\nabla\psi_h)-f_{T,2})R_h\phi_2	+(-\Delta\psi_h-p_h+n_h-f_{T,3})R_h\phi_3]d\bm{x}	 \nonumber\\
&+\sum\limits_{E\in \mathcal{I}_h \cap \partial T }\int_{E} [\left([\nabla p_h\cdot \bm{n}_E]+[ p_h\nabla\psi_h\cdot \bm{n}_E]\right)R_h\phi_1+([\nabla n_h\cdot \bm{n}_E]-[ n_h\nabla\psi_h \cdot \bm{n}_E])R_h\phi_2 	 \nonumber\\
& +[\nabla\psi_h\cdot \bm{n}_E]R_h\phi_3] ds  \Big{\}}{ \Big|}		 \nonumber\\
\lesssim&  \sup\limits_{{\bm{\phi}}\in \bm{Y};\|{\bm{\phi}}\|_{\epsilon,\Omega}=1}	\Big{\{}\sum\limits_{T\in\mathcal{T}_h}
{h_{T}\epsilon^{-1/2}}\left(|| \nabla\cdot(p_h\nabla \psi_h )+f_{T,1}||_{L^2(T)}\|\phi_1\|_{\epsilon,\widetilde{w}_T}\right.	 \nonumber\\
&\left.+|| \nabla\cdot(n_h\nabla\psi_h) -f_{T,2} ||_{L^2(T)}\|\phi_2\|_{\epsilon,\widetilde{w}_T}	+||n_h-p_h-f_{T,3}||_{L^2(T)}\|\phi_3\|_{\epsilon,\widetilde{w}_T}\right)		 \nonumber\\
&	+ \sum\limits_{E\in \mathcal{I}_h }{h_E^{1/2}\epsilon^{-1/2}}\left(||[\nabla p_h \cdot \bm{n}_E]+[ p_h\nabla\psi_h\cdot \bm{n}_E] ||_{L^2(E)}\|\phi_1\|_{\epsilon,\widetilde{w}_E}  \right.\nonumber\\
&\left.	+ || [\nabla n_h\cdot \bm{n}_E]-[ n_h\nabla\psi_h\cdot \bm{n}_E] ||_{L^2(E)}
\|\phi_2\|_{\epsilon,\widetilde{w}_E}	 + \|[\nabla\psi_h\cdot \bm{n}_E] \|_{L^2(E)}\|\phi_3\|_{\epsilon,\widetilde{w}_E} \right)\Big{\}}		 \nonumber\\
\lesssim &  \eta,	\label{Fyh}
\end{align}
%here, $\bm{\delta}=(\delta_1, \delta_2, \delta_3)$. To prove the first inequality in \eqref{Fyh} we use the inequalities $\|\delta_i\|_{L^2(T)}\lesssim  \|\delta_i\|_{1,T}$ and $\|\delta_i\|_{L^2(E)}\lesssim h_E^{\frac{1}{2}}\|{\delta}_i\|_{L^2({T})}$, $ i=1,2,3,$ where the former is demonstrated by inequalities \eqref{paopao0}, \eqref{paopao3} and \eqref{paopao00}, i.e.,
%\begin{equation}\label{paoapply}
%\|\delta_i\|_{L^2(T)}\lesssim \|b_T\delta_i\|_{L^2(T)}\lesssim h_T \|\nabla(b_T\delta_i)\|_{L^2(T)}\lesssim h_T \|\delta_i\|_{1,T},\ i=1,2,3,
%\end{equation}
To prove the first inequality in \eqref{Fyh} we need use inequalities \eqref{eq:SZ3} and \eqref{eq:SZ4}.
%here, $\bm{\delta}=(\delta_1, \delta_2, \delta_3)$. To prove the first inequality in \eqref{Fyh} we need use $\|\delta_i\|_{L^2(T)}\lesssim  h_T\epsilon^{-1/2}\|\delta_i\|_{\epsilon,T}$ and $\|\delta_i\|_{L^2(E)}\lesssim h_E^{1/2}\epsilon^{-1/2}\|{\delta}_i\|_{\epsilon,T}$, $ i=1,2,3,$ where the former is shown by inequalities (\ref{paopao0}, \ref{paopao3}, \ref{paopao00}), i.e.,
%\begin{align}
%\|\delta_i\|_{L^2(T)}&\lesssim \|b_T\delta_i\|_{L^2(T)}\lesssim h_T \|\nabla(b_T\delta_i)\|_{L^2(T)} \label{paoapply}\\
%&\lesssim h_T \|\delta_i\|_{1,T}\lesssim {h_T\epsilon^{-1/2}\|\delta_i\|_{\epsilon,T}},\ i=1,2,3,\nonumber
%\end{align}
%and the latter is obtained from
%\begin{equation}
%\|\delta_i\|_{L^2(E)}\lesssim  h_E^{\frac{1}{2}}|{\delta_i}|_{1,T}
%\lesssim   h_E^{\frac{1}{2}}\epsilon^{-1/2}\|{\delta}_i\|_{\epsilon,T}, i=1,2,3,  \label{scaling}
%\end{equation}
%which is based on trace theorem and $\bm{\delta}\in Y_h$.
%In order to show the first/last, second, and the third inequalities
%in \eqref{scaling}, we have applied scaling techniques,  Sobolev trace theorem, and norm equivalence, respectively.
%%Sobolev trace theorem applied for obtaining the second inequality, and norm equivalence is used for
%Note that $\widehat{T}$ and $\widehat{E}$ here represent the corresponding reference elements of $T$ and $E$.

At last, we have in the fourth term on the RHS of \eqref{eq:Fuh1} that,
%\begin{align}
%&\|{\bm{F}}({\bm{u}_h})-{\bm{\widetilde{F}}}({\bm{u}_h})\|_{{\bm Y}_h^*}	\label{3.22}\\
%=&\sup\limits_{{\bm{\delta}}\in {\bm Y}_{h},\|{\bm{\delta}}\|_{1,\Omega}=1}{ \Big|}\sum\limits_{T\in\mathcal{T}_h}\int_{T}[(f_1-f_{T,1})\delta_1
%+(f_2-f_{T,2})\delta_2+(f_3-f_{T,3})\delta_3]d{\bm x}	{ \Big|}\nonumber\\
%\lesssim &  \sup\limits_{{\bm{\delta}}\in {\bm Y}_{h}, \|{\bm{\delta}}\|_{1,\Omega}=1}\sum\limits_{T\in\mathcal{T}_h}h_{T}(\|f_1-f_{T,1}\|_{L^2(T)}\|\delta_1\|_{1,T}+
%\|f_2-f_{T,2}\|_{L^2(T)}\|\delta_2\|_{1,T}	\nonumber\\
%+&\|f_3-f_{T,3}\|_{L^2(T)}\|\delta_3\|_{1,T})	\nonumber\\
%\lesssim & \varepsilon,	\nonumber
%\end{align}
\begin{align}
&{\|\bm{R}_h^* ({\bm{F}}({\bm{u}_h})
-{\bm{\widetilde{F}}}({\bm{u}_h}))\|_{{-\epsilon},\Omega}}	\nonumber\\
=&\sup\limits_{{\bm{\phi}}\in \bm{Y};\|{\bm{\phi}}\|_{\epsilon,\Omega}=1}{ \Big|}\sum\limits_{T\in\mathcal{T}_h}\int_{T}[(f_1-f_{T,1})R_h\phi_1
+(f_2-f_{T,2})R_h\phi_2+(f_3-f_{T,3})R_h\phi_3]d{\bm x}	{ \Big|}\nonumber\\
\lesssim &  \sup\limits_{{\bm{\phi}}\in \bm{Y};\|{\bm{\phi}}\|_{\epsilon,\Omega}=1}\sum\limits_{T\in\mathcal{T}_h}h_{T}\epsilon^{-1/2} \left( \|f_1-f_{T,1}\|_{L^2(T)}\|\phi_1\|_{\epsilon,\widetilde{w}_T}+\|f_2-f_{T,2}\|_{L^2(T)}\|\phi_2\|_{\epsilon,\widetilde{w}_T}	\right. \nonumber\\
+&\left. \|f_3-f_{T,3}\|_{L^2(T)}\|\phi_3\|_{\epsilon,\widetilde{w}_T} \right)\nonumber\\
\lesssim & \varepsilon,	\label{3.22}
\end{align}
{where we have applied inequalities \eqref{eq:SZ3} to obtain the first inequality}. %we have applied the following inequality, i.e.,
%\begin{equation}
%\|\delta_i\|_{L^2(T)}\lesssim\|b_T\delta_i\|_{L^2(T)}\lesssim{ h_T\|\nabla(b_T \delta_i)\|_{L^2(T)}}\lesssim { h_T\|\delta_i\|_{1,T}},\ i=1,2,3,
%\end{equation}
%which is derived by using inequalities (\ref{paopao0}) and (\ref{paopao00}).

Therefore, the results with four terms lead to the conclusion
\begin{equation}
	\|{\bm{F}}({\bm{u}_h})\|_{{-\epsilon},\Omega}\leq C_7(\eta +\varepsilon),
\end{equation}
with $C_7$ independent of $\bm{u}_h$, $\bm{f}$ and $\epsilon$.
\end{proof}
%\qed

%--------
\begin{theorem}\label{th3}
{There exist {constants $C_8$ and $C_9$} independent of ${\bm{u}_h}$, $\bm{f}$} and $\epsilon$, such that
\begin{align}
&\eta_T\leq {C_8(\epsilon^{-1/2}}\|{\bm F}({\bm{u}_h})\|_{-\epsilon,T} +\varepsilon_T),\quad\forall T\in\mathcal{T}_h, \label{th3111} \\
&\eta_E\leq {C_9(\epsilon^{-1/2}}\|{\bm F}({\bm{u}_h})\|_{-\epsilon,\widetilde{w}_E} +\varepsilon_{\widetilde{w}_E}),\quad\forall E\in\mathcal{I}_h, \label{th3112}
\end{align}
where $\varepsilon_{\widetilde{w}_E}:= \big(\sum\limits_{T\in\widetilde{w}_E} \varepsilon_T^2\big) ^{{1}/{2}}$.
\end{theorem}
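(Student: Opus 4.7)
The plan is to follow the standard Verf\"urth bubble-function technique, treating the three scalar residuals of the PNP system independently by testing $\langle \bm F(\bm u_h),\cdot\rangle$ against suitably scaled bubbles localized to either an element $T$ or a patch $\widetilde w_E$. The already-proved element bound will then be recycled inside the edge bound.

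First I would establish \eqref{th3111}. Since $p_h$, $n_h$, $\psi_h$ are piecewise linear, $\Delta p_h = \Delta n_h = \Delta \psi_h = 0$ on $T$, so the interior residuals reduce to $R_{T,1} := \nabla\cdot(p_h\nabla\psi_h) + f_{T,1}$, $R_{T,2} := -\nabla\cdot(n_h\nabla\psi_h) - f_{T,2}$, and $R_{T,3} := n_h - p_h - f_{T,3}$. For the first component I would take the test function $\bm v = (b_T R_{T,1},0,0)$, extended by zero outside $T$; integration by parts (with $b_T|_{\partial T}=0$) gives
\[
\langle \bm F(\bm u_h),\bm v\rangle = -\int_T b_T R_{T,1}^2\,d\bm x + \int_T (f_{T,1}-f_1)\,b_T R_{T,1}\,d\bm x.
\]
The bubble equivalence \eqref{paopao1} then bounds $\|R_{T,1}\|_{L^2(T)}^2$ from above by $|\langle \bm F(\bm u_h),\bm v\rangle| + \|f_1-f_{T,1}\|_{L^2(T)}\|R_{T,1}\|_{L^2(T)}$, and the inverse estimate \eqref{paopao3} gives $\|\bm v\|_{\epsilon,T}\lesssim (1+\epsilon^{1/2}h_T^{-1})\|R_{T,1}\|_{L^2(T)}$. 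Duality $|\langle \bm F(\bm u_h),\bm v\rangle|\le \|\bm F(\bm u_h)\|_{-\epsilon,T}\|\bm v\|_{\epsilon,T}$ followed by cancelling one factor of $\|R_{T,1}\|_{L^2(T)}$ and multiplying by $h_T\epsilon^{-1/2}$ produces the desired estimate, since for $h_T,\epsilon\le 1$ the prefactor $h_T\epsilon^{-1/2}+1$ is bounded by a constant multiple of $\epsilon^{-1/2}$. The other two components follow by the same computation.

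Next I would establish \eqref{th3112}. For the edge $E = T_1\cap T_2$ and the first jump $J_{E,1} := [\nabla p_h\cdot\bm n_E] + [p_h\nabla\psi_h\cdot\bm n_E]$, I would choose $\bm v = (b_E P J_{E,1},0,0)$. Piecewise integration by parts over $T_1$ and $T_2$ produces the edge integral $\int_E J_{E,1} b_E P J_{E,1}\,ds$ plus interior contributions involving the already-controlled element residuals $R_{T_i,1}$ and data oscillation. The edge bubble equivalence \eqref{paopao2} bounds $\|J_{E,1}\|_{L^2(E)}^2$ from above by that edge integral, while \eqref{paopao5} and \eqref{paopao4} give $\|\bm v\|_{L^2(T_i)}\lesssim h_{T_i}^{1/2}\|J_{E,1}\|_{L^2(E)}$ and $\|\bm v\|_{\epsilon,\widetilde w_E}\lesssim h_E^{1/2}(1+\epsilon^{1/2}h_E^{-1})\|J_{E,1}\|_{L^2(E)}$. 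Duality against $\bm F(\bm u_h)$, together with the bound \eqref{th3111} applied to the interior residual terms in order to absorb $h_{T_i}\epsilon^{-1/2}\|R_{T_i,1}\|_{L^2(T_i)}$ into $\epsilon^{-1/2}\|\bm F(\bm u_h)\|_{-\epsilon,T_i}+\varepsilon_{T_i}$, then yields \eqref{th3112} for this jump; the second jump is analogous.

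The main obstacle is the Poisson jump $[\nabla\psi_h\cdot\bm n_E]$, because the coefficient $\epsilon$ multiplies $\nabla\psi_h\cdot\nabla v_3$ in the weak form, so piecewise integration by parts produces $\int_E \epsilon[\nabla\psi_h\cdot\bm n_E]v_3\,ds$ on the right rather than the bare jump. This inflates the effective $\epsilon$ scaling in the dual pairing, and the bookkeeping must exploit $\epsilon\in(0,1]$ together with $h_E\le 1$ from the quasi-uniform mesh (yielding $\sqrt{h_E^2+\epsilon}$-type factors that collapse to order one) to reduce the prefactor back to $\epsilon^{-1/2}$. Choosing the test function scaling for this jump carefully, and again invoking \eqref{th3111} to soak up the accompanying $R_{T_i,3}$ and $f_3-f_{T_i,3}$ contributions into $\epsilon^{-1/2}\|\bm F(\bm u_h)\|_{-\epsilon,\widetilde w_E}+\varepsilon_{\widetilde w_E}$, completes the estimate.
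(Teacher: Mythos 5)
Your proposal follows essentially the same route as the paper's proof: Verf\"urth's bubble-function technique, first controlling the element residuals by testing against interior bubbles, then controlling the edge jumps by testing against edge bubbles on $\widetilde{w}_E$ and absorbing the interior contributions via the already-proved element bound. The only real difference is bookkeeping: you pair directly with $\bm{F}(\bm{u}_h)$ in the $\|\cdot\|_{-\epsilon}$ duality and track the factor $1+\epsilon^{1/2}h_T^{-1}\lesssim \epsilon^{-1/2}$ inline, whereas the paper works with the auxiliary functional $\widetilde{\bm{F}}(\bm{u}_h)$ in the unweighted dual norm of the bubble space and converts to $\|\bm{F}(\bm{u}_h)\|_{-\epsilon,T}+\epsilon^{1/2}\varepsilon_T$ only at the end (inequalities \eqref{eq:lowerf1}--\eqref{eq:lowerf}). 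For the element residuals and the first two edge jumps your argument is sound and matches the paper's.

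The one genuine soft spot is precisely the one you flag but do not actually resolve: the jump $[\nabla\psi_h\cdot\bm{n}_E]$. Piecewise integration by parts of the $\psi$-equation produces $\epsilon\int_E[\nabla\psi_h\cdot\bm{n}_E]\,v_3\,ds$, so the duality argument with the edge bubble and \eqref{paopao2} controls $h_E^{1/2}\,\epsilon\,\|[\nabla\psi_h\cdot\bm{n}_E]\|_{L^2(E)}$, i.e.\ you are short by a factor $\epsilon^{-1}$ relative to the unweighted jump appearing in $\eta_E$, and the inequalities $\epsilon\le 1$, $h_E\le 1$ run in the wrong direction to recover it. ``Choosing the test function scaling carefully'' cannot close this on its own, since rescaling the bubble rescales both sides of the pairing; one would have to either carry the extra power of $\epsilon$ explicitly into the efficiency bound for that component or re-weight that term of $\eta_E$. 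It is worth noting that the paper's own proof disposes of this component with a bare ``similarly, we derive'' and never confronts the $\epsilon$ weight (the same $\epsilon$ is also silently dropped from the $\psi$-flux edge term in the proof of Theorem \ref{th2}, which is harmless there only because it aids the reliability direction). So your write-up is at least more candid about where the difficulty lies, but as written neither your sketch nor the paper establishes \eqref{th3112} for the $[\nabla\psi_h\cdot\bm{n}_E]$ contribution with a constant genuinely independent of $\epsilon$.
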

\begin{proof}
The relationship between $\bm{\widetilde{F}}({\bm{u}_h})$ and $\eta_T$ is considered first, i.e., we have
\begin{align}
&h_{T}|| \nabla\cdot(p_h\nabla \psi_h)+f_{T,1} ||_{L^2(T)} \nonumber \\
= & h_{T}\|\Delta p_h+\nabla\cdot(p_h\nabla \psi_h)+f_{T,1}\|_{L^2(T)}	 \nonumber\\
\lesssim  & h_{T}\sup\limits_{\phi\in\mathcal{P}_1(T)\setminus \{0\}}|<\Delta p_h+\nabla\cdot(p_h\nabla \psi_h)+f_{T,1},b_T\phi>{ _T}|\cdot\|\phi\|_{L^2(T)}^{-1}	 \nonumber\\
\lesssim&  \sup\limits_{\phi\in\mathcal{P}_1(T)\setminus \{0\}}|<\Delta p_h+\nabla\cdot(p_h\nabla \psi_h)+f_{T,1},b_T\phi>{ _T}|\cdot |{b_T\phi}|_{1,T}^{-1}	 \nonumber\\
\lesssim &  \sup\limits_{{\phi\in\mathcal{P}_1(T)\setminus \{0\}}}|<{\bm{\widetilde{F}}}({\bm{u}_h}),(b_T\phi,0,0)>{ _T}|{\cdot |{b_T\phi}|_{1,T}^{-1}	} \nonumber\\
\lesssim &  \sup\limits_{{\bm{v}}\in {{{{\bm{\widetilde{Y}}}_h|_T}}},\|{\bm{v}}\|_{1,{T}}=1}|<{\bm{\widetilde{F}}}({\bm{u}_h}),{\bm{v}}>{_T}|	 \nonumber\\
\lesssim &\|{\bm{\widetilde{F}}}({\bm{u}_h})\|{_{({\bm{\widetilde{Y}}}_h|_T)^*}}	\label{posterioripaopao}
\end{align}
where~\eqref{paopao1} is applied for having the first inequality, and inequalities \eqref{paopao0} and \eqref{paopao3} are used for getting the second inequality.
%
%{\blu According to \eqref{paopao3} in {Lemma \ref{paopao},we have $\forall {{\phi}}\in \widetilde{Y}_h^{0}$,}
%$$\frac{{h_T}}{\|{{\phi}}\|_{L^2({T})}}\leq  \frac{C}{|{{\phi}}|_{1,T}}, \|{{\phi}}\|_{1,T}\leq C |{{\phi}}|_{1,T}$$
%}

Analogously, we have
\begin{align}
		&h_{T}\| \nabla\cdot(n_h\nabla\psi_h)-f_{T,2} \|_{L^2(T)} \lesssim \|{\bm{\widetilde{F}}}({\bm{u}_h})\|{_{({\bm{\widetilde{Y}}}_h|_T)^*}}, \\
   		&h_{T}\| n_h-p_h-f_{T,3}\|_{L^2(T)} \lesssim \|{\bm{\widetilde{F}}}({\bm{u}_h})\|{_{({\bm{\widetilde{Y}}}_h|_T)^*}}.
\end{align}

On the side of $\eta_E$, ${\forall E\in\mathcal{I}_h}$, we have
\begin{align}
&h_E^{{1}/{2}}||  [\nabla p_h\cdot \bm{n}_E]+[p_h\nabla\psi_h\cdot \bm{n}_E] ||_{L^2(E)}	\nonumber	\\
\lesssim& h_E^{{1}/{2}}\sup\limits_{\delta\in \mathcal{P}_{ 1}(E)\setminus\{0\}}|| { \delta} ||_{L^2(E)}^{-1}\left|\int_{E}([\nabla p_h\cdot \bm{n}_E]+[ p_h\nabla\psi_h\cdot \bm{n}_E])b_EP\delta ds	\right|	 \nonumber\\
\lesssim & h_E^{{1}/{2}}h_T^{{1}/{2}}\sup\limits_{\delta\in \mathcal{P}_{ 1}(E)\setminus\{0\}}||  b_E { P}{\delta}||_{L^2(T)}^{-1}\left|\int_{E}([\nabla p_h\cdot \bm{n}_E]+[p_h\nabla\psi_h\cdot \bm{n}_E])b_EP\delta ds\right|		\nonumber\\
\lesssim& h_T\sup\limits_{\delta\in \mathcal{P}_{ 1}(E)\setminus\{0\}}||  b_E{ P} {\delta}||_{L^2(T)}^{-1}\left|<{\bm{\widetilde{F}}}({\bm{u}_h}),( b_E P\delta,0,0)>{_{\widetilde{w}_E}} \right.		\nonumber\\
 &\left.-\int_{\widetilde{w}_E}(  -\Delta p_h-\nabla\cdot(p_h\nabla \psi_h)-f_{T,1} )b_E P\delta d{\bm x}  \right|			\nonumber\\
\lesssim& \sup\limits_{{\bm{v}}\in{ {\bm{\widetilde{Y}}}_{h}{|_{\widetilde{w}_E}}},\|{\bm{v}}\|_{1,{{\widetilde{w}_E}}}=1} \left|<{\bm{\widetilde{F}}}_h({\bm{u}_h}),{\bm{v}}>{|_{\widetilde{w}_E}}\right|+{h_{T}}|_{\widetilde{w}_E}|| \nabla\cdot(p_h\nabla \psi_h)+f_{T,1} ||_{L^2(\widetilde{w}_E)} 			\nonumber\\
%\lesssim& \sup\limits_{{\bm{v}}\in {{\bm{\widetilde{Y}}}_h}|_{_{\widetilde{w}_E}},\| {\bm{v}}\|_{1,\Omega}=1}<{\bm{\widetilde{F}}}_h({\bm{u}_h}),{\bm{v}}>			\nonumber\\
\lesssim &  \|{\bm{\widetilde{F}}}({\bm{u}_h})\|{_{({\bm{\widetilde{Y}}}_h|_{\widetilde{w}_E})^*}}, 
\end{align}
where ${h_{T}}|_{\widetilde{w}_E}$ represents the maximal diameter of $T$ in ${\widetilde{w}_E}$. Inequalities \eqref{paopao2} and \eqref{paopao5} are applied for showing the first and second inequality, respectively, and {\eqref{paopao4}} is used to have the fourth inequality here.%and \eqref{paopao01} are

%{ where \eqref{paopao2} is applied for getting the first inequality and \eqref{paopao5} is used for showing the second inequality. Note ${h_{T}}|_{\widetilde{w}_E}$ here represents the maximum diameter of $T$ in ${\widetilde{w}_E}$.
%
%where ${h_{T}}|_{\widetilde{w}_E}$ represents the maximal diameter of $T$ in ${\widetilde{w}_E}$. \eqref{paopao2} and \eqref{paopao5} are applied for the first and second inequality respectively. From the second inequality to the third inequality, $h_E\leq h_T$ and \eqref{paopao4} are applied.  \eqref{paoapply} is used in the fourth inequality. In the end, the final inequality is obtained with \eqref{posterioripaopao}.
%}

%$\forall {\delta}\in  \mathcal{P}_{ 1}(E)$, ${1}/{\|{\delta}\|_{L^2(E)}}\leq C h_T^{{1}/{2}}/{\|b_E {\delta}\|_{L^2(T)}}$ in \eqref{paopao4},
Similarly, we derive
\begin{align}
	&h_E^{{1}/{2}}||[\nabla n_h\cdot \bm{n}_E]-[n_h\nabla\psi_h\cdot \bm{n}_E] ||_{L^2(E)} \lesssim \|{\bm{\widetilde{F}}}({\bm{u}_h})\|{_{({\bm{\widetilde{Y}}}_h|_{\widetilde{w}_E})^*}}, \\
	 & h_E^{{1}/{2}}|| [\nabla\psi_h\cdot \bm{n}_E] ||_{L^2(E)} \lesssim \|{\bm{\widetilde{F}}}({\bm{u}_h})\|{_{({\bm{\widetilde{Y}}}_h|_{\widetilde{w}_E})^*}}.
\end{align}
%Therefore, we have
%\begin{equation}
%	\eta =\left(\sum\limits_{T\in\mathcal{T}_h} \eta_T^2+ \sum\limits_{E\in \mathcal{I}_h} \eta_E^2 \right)^{{1}/{2}} \lesssim\|{\bm{\widetilde{F}}}({\bm{u}_h})\|_{{\bm{\widetilde{Y}}}_h^*}.
%\end{equation}
Furthermore, with {${\bm{\widetilde{Y}}}_h|_T\subset{(H_0^1(T))^3}$}, we have
\begin{align}
\| {\bm{\widetilde{F}}}({\bm{u}_h})\|{_{({\bm{\widetilde{Y}}}_h|_T)^*}}
\leq &\|{\bm{F}}({\bm{u}_h})\|{_{({\bm{\widetilde{Y}}}_h|_T)^*}}+\|{\bm{F}}({\bm{u}_h})- {\bm{\widetilde{F}}}({\bm{u}_h})\|{_{({\bm{\widetilde{Y}}}_h|_T)^*}} \nonumber\\
%&\lesssim \|{\bm{F}}({\bm{u}_h})\|_{{\bm{\widetilde{Y}}}_h^*} + \varepsilon \nonumber\\
%\lesssim & \|{\bm{F}}({\bm{u}_h})\|_{-1,T}+  \|{\bm{F}}({\bm{u}_h})-{\bm{\widetilde{F}}}({\bm{u}_h})\|{_{({\bm{\widetilde{Y}}}_h|_T)^*}}\\
\lesssim &   \|{\bm{F}}({\bm{u}_h})\|_{{-\epsilon},T}+  \|{\bm{F}}({\bm{u}_h})- {\bm{\widetilde{F}}}({\bm{u}_h})\|{_{({\bm{\widetilde{Y}}}_h|_T)^*}}  \label{eq:lowerf1} %\varepsilon_T},\nonumber
\end{align}
and for the second term in inequality \eqref{eq:lowerf1},
\begin{align}
&\|{\bm{F}}({\bm{u}_h})-{\bm{\widetilde{F}}}({\bm{u}_h})\|_{(\widetilde{{\bm Y}}_h|_T)^*}	\nonumber\\
=&\sup\limits_{{\bm{\delta}}\in \widetilde{{\bm Y}}_{h}|_T,\|{\bm{\delta}}\|_{1,T}=1}{ \Big|}\int_{T}[(f_1-f_{T,1})\delta_1
+(f_2-f_{T,2})\delta_2+(f_3-f_{T,3})\delta_3]d{\bm x}	{ \Big|}\nonumber\\
\lesssim &  \sup\limits_{{\bm{\delta}}\in \widetilde{{\bm Y}}_{h}|_T, \|{\bm{\delta}}\|_{1,T}=1} h_{T}(\|f_1-f_{T,1}\|_{L^2(T)}\|\delta_1\|_{1,T}+
\|f_2-f_{T,2}\|_{L^2(T)}\|\delta_2\|_{1,T}	\nonumber\\
&\quad+\|f_3-f_{T,3}\|_{L^2(T)}\|\delta_3\|_{1,T})	\nonumber\\
\lesssim & \epsilon^{1/2}\varepsilon_T.	\label{eq:lowerf}
\end{align}
Here, $\bm{\delta}=(\delta_1, \delta_2, \delta_3)$. To prove the first inequality in \eqref{eq:lowerf}, the following inequalities are used.
\begin{align}
\|\delta_i\|_{L^2(T)}&\lesssim h_T \|\nabla(\delta_i)\|_{L^2(T)}\lesssim h_T \|\delta_i\|_{1,T},\ i=1,2,3.\nonumber
\end{align}
Thus, the inequality \eqref{th3111} is proved. In the similar way, we get the inequality \eqref{th3112}.
\end{proof}
\begin{remark}
Together with the left inequality in \eqref{basic1} as taking the domain to be $T$, the inequalities \eqref{th3111} and \eqref{th3112} lead to the conclusion that ${\eta\lesssim \epsilon^{-3/2}\|\bm{e}\|_{\epsilon,\Omega}+\varepsilon}$. On the other hand, the right inequality in \eqref{basic1} combined with \eqref{fuh_eta} gives $\|\bm{e}\|_{\epsilon,\Omega} \lesssim \eta + \varepsilon$.
As $f_i$ is piecewise $H^s$ $(0<s\le 1)$ over  $\mathcal{T}_h$, the oscillation term $\varepsilon=\mathcal{O}(h^{1+s})$ \cite{Morin2001Data}, i.e., a higher order term of $\|\bm{e}\|_{\epsilon,\Omega}$. Specifically, $\|f_i-f_{T,i}\|_{L^2(T)}\lesssim h\|f_i\|_{1,T}$,  $\forall f_i\in H^1(T)$,
%$(i=1,2,3)$ and $\forall T\in \mathcal{T}_h$.
and $\|f_i-f_{T,i}\|_{L^2(T)}=0$ when $f_i$ is a piecewise-constant function on $\mathcal{T}_h$. Thus, we have the desired result at the leading order as the inequality \eqref{eq:eta_e} shows, i.e., $\underline{C}\eta\leq \|{\bm{e}}\|_{\epsilon,\Omega}\leq \overline{C}\eta$,
%\begin{equation}	\label{eq:etaFeta}
%	c\eta\leq\|{\bm{F}}({\bm{u}_h})\|_{-1,\Omega}\leq C\eta,
%\end{equation}
with constants $\underline{C}$ and $\overline{C}$ independent of $\bm{u}_h$ and $\bm{f}$, and $\overline{C}$ independent of $\epsilon$.
However, more thorough discussions need to be done carefully for the general case with $f_i\in L^2(T)$.
\end{remark}

\section{Numerical experiments}  \label{numerics}
During the numerical procedure, we involve  the well-known cycle of the adaptive method, that is ``{\it SOLVE} $\rightarrow$ {\it ESTIMATE} $\rightarrow$ {\it MARK} $\rightarrow$ {\it REFINE} ".
The details of the four steps are stated as follows.

%In this section, we provide some numerical examples to confirm our theoretical analyses. As we all known, there are four steps for the cycle of  the adaptive method, which are solve $\rightarrow$ estimate $\rightarrow$ mark $\rightarrow$ refine.
%{For the first step, we adopt the information of the solutions on $ k-th$ level adaptive meshes to find the solutions on $ (k + 1)-th$ level adaptive meshes which is based on the two-grid method in \cite{Yang}.}{ (there are two algorithms in Yang's paper, they use uniform meshes, how did we use two-grid?)}

\textit{SOLVE}: An adaptive two-grid finite element method is applied to solve the nonlinear system. Instead of using ``uniform" coarse meshes \cite{yang}, we consider the ``adaptive" meshes on the $l$-th level as coarse meshes and denote them by $h_l$. At the sub-level, the initial-guessed ionic concentrations $p^{(0)}$ and $n^{(0)}$ are given and used to solve $\psi^{(0)}$ from Poisson's equation. Then, $\psi^{(0)}$ is used to produce $p^{(1)}$ and $n^{(1)}$ at the next level of iteration by solving Nernst-Planck equations.  The system is solved iteratively at the sub-level till the error tolerance $\|\psi^{(k+1)}-\psi^{(k)}\|_{L^2(\Omega)}<10^{-5}$ is satisfied and hence the solutions $p_{h_l}$, $n_{h_l}$ and $\psi_{h_l}$ are obtained at the $l$-th level. Next, the solutions on $l$-th level of ``coarse" grid are used to find the solutions on the $(l+1)$-th level of adaptive meshes (i.e., the ``fine" grid denoted by $h_{l+1}$). More precisely,  $p_{h_l}$ and $n_{h_l}$ are utilized for solving $\psi_{h_{l+1}}$ from Poisson's equation, then $p_{h_{l+1}}$ and $n_{h_{l+1}}$ are obtained from solving Nernst-Planck equations with $\psi=\psi_{h_{l+1}}$. More details and related analysis are referred to the two-grid method in Ref.~\cite{yang} and adaptive two-grid method in Ref.~\cite{li2018analysis}.

\textit{ESTIMATE}: The a posteriori error estimator $\eta$ introduced in Sec. \ref{sec:posterr} is applied to estimate the error.

\textit{MARK}: The maximum marking strategy {\cite{babuska,Dorfler1996A}} is utilized for remarking the grid.

\textit{REFINE}: The {newest} vertex bisection method \cite{Binev} is used to refine the marked grid. Simultaneously, the new grid is created.

We run the above loops to produce adaptive meshes till the criterion for the fixed maximal degree of freedom $N$ is arrived, i.e., { $N=\mathcal{O}(10^5)$}. All examples are in two dimensions and simulations are performed with the finite element software IFEM. The first and second examples with $\epsilon=1$ are aimed to validate the a posteriori error estimation theory and show the application of the adaptive method for geometrical singularities, respectively.
% in the following is aimed to validate the a posteriori error estimation theory, the second and
The third example deals with the boundary layer effect with $\epsilon<1$.

\begin{example} \label{ex1}
We take
\begin{equation}
\left\{
\begin{array}{lll}
p =\sin (2\pi x) \sin (2\pi y),\\
n =\sin (3\pi x)\sin (3\pi y) ,\\
\psi =\sin (\pi x)\sin(\pi y),
\end{array}
\right.
\end{equation}
as an exact solution of the PNP equation system \eqref{model2} with $\epsilon=1$ in the L-shaped domain,  $\Omega= [-1,1]\times[-1,1]\setminus[0,1]\times [-1,0]$, with homogeneous boundary conditions. Consequently, functions $f_1,f_2$  and $f_3 $ on the RHS are determined by the exact solution.
\end{example}

The problem is solved by the adaptive algorithm and Fig. \ref{fig:numsln1} shows the result with the degree of freedom $N=951$ and the error $||\bm{e}||_{L^2}\sim 7\times 10^{-2}$. Panels (a-c) show the numerical solutions of $p$, $n$ and $\psi$, and panel (d) shows the corresponding adaptive mesh refinements. The nonuniform grids illustrate the fact that the a posteriori error estimate works here, nevertheless we do not observe obvious adaptivity, which is caused by the fact that the synthetic solution we choose is sufficiently regular, i.e., the adaptive algorithm is not highly required for solving it efficiently.

\begin {figure}[t!]
\centering
\includegraphics[scale=0.6]{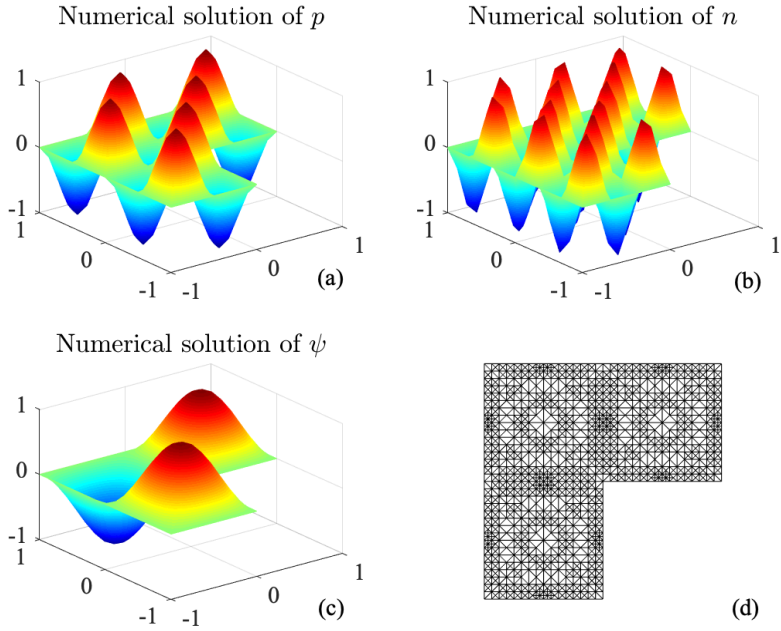}
\caption{The numerical solution of $p$, $n$, $\psi$ \textup{(a,b,c)}, and the final mesh grid \textup{(d)} for Example \ref{ex1} with $N=951$ and $||\bm{e}||_{L^2}\sim 7\times 10^{-2}$.}	\label{fig:numsln1}
\end{figure}
%with the latter showing the indicative performance of the posteriori error estimator.  { (the smallest mesh size, the error bound?)}

\begin {figure}[h!]
\centering
\includegraphics[scale=0.45]{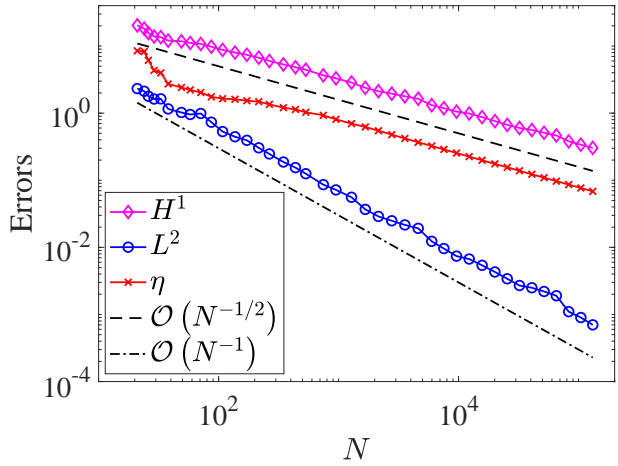}
\caption{The numerical errors are compared with analytical results.  The $H^1$ error of the solution (diamond) and the a posteriori error estimator $\eta$ (circle) both converge to $\mathcal{O}(N^{-1/2})$ for large $N$. The $L^2$ error of the solution (cross) converges to $\mathcal{O}(N^{-1})$.
}
\label{fig:err1}
\end{figure}

Furthermore, we run the algorithm till $N=2\times 10^5$ and test the {a} posteriori error estimation theory by the exact solution. With $N$ increased adaptively, the error analysis is shown in Fig. \ref{fig:err1}  where both $H^1$ and $L^2$ errors of the solution and the a posteriori error estimator $\eta$ are presented. For $N>10^2$, the $H^1$ and $L^2$ errors converge to $\mathcal{O}(N^{-1/2})$ and $\mathcal{O}(N^{-1})$, respectively, which are as expected for linear finite element interpolations. For large number of $N$, i.e., $N>10^3$, $\eta$ converges to $\mathcal{O}(N^{-1/2})$,  embodying that the error of the numerical solution is well controlled by the a posteriori error estimator $\eta$ of which the reliability and efficiency are hence numerically shown.

\begin{example} \label{ex2}
In order to demonstrate the adaptive performance of the estimator $\eta$ for geometrical singularities, we choose $f_1=f_2=f_3=1$ and rewrite the problem~\eqref{model2} with $\epsilon=1$ as follows,
\begin{equation}
\left\{
\begin{array}{lll}
-\Delta p-\nabla\cdot(p\nabla\psi)=1, \\
-\Delta n+\nabla\cdot(n\nabla\psi)=1, \\
-\Delta \psi-p+n=1,
\end{array}
\right.
\end{equation}
by which we need find $p(\bm{x}),n(\bm{x}),\psi(\bm{x}) \in H_0^1({\Omega})$ with the same computational domain as in Example \ref{ex1}.
\end{example}

\begin {figure}[t!]
\centering
\includegraphics[scale=0.55]{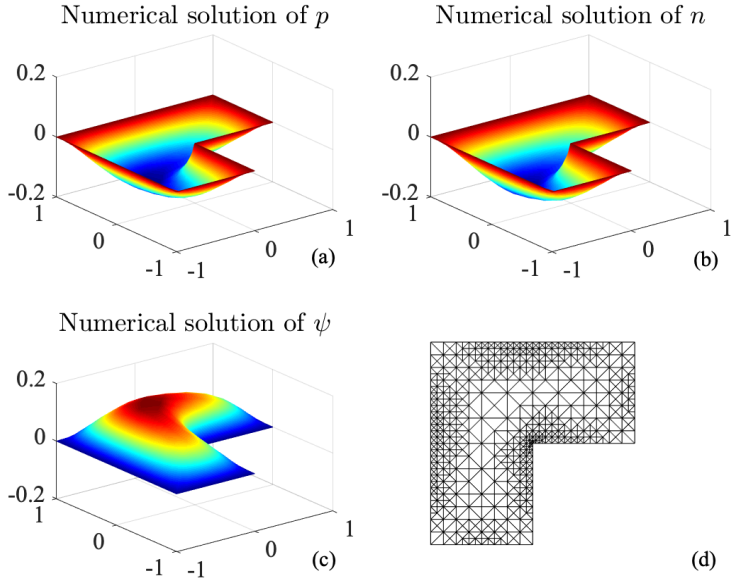}
\caption{The numerical solution of $p$, $n$ and $\psi$  \textup{(a,b,c)} and the mesh grid \textup{(d)} of Example \ref{ex2} for $N=1243$ and $\eta\approx 7\times 10^{-3}$.}
% \eta=.0066
\label{fig:numsol2}
\end{figure}

To illustrate the adaptive mesh refinements clearly, we report the mesh grid for $N=1243$ in Fig. \ref{fig:numsol2} (panel (d)). The corresponding numerical results for $p$, $n$ and $\psi$ presented in panels (a,b,c) are quite smooth, however, it is evident to observe the adaptive mesh refinements near the corner point $(0, 0)$, which indicates the well performance of the a posteriori error estimator $\eta$ proposed in current work. More specifically, we calculate the numerical value of the indicator $\eta$ versus $N$ and present it in Fig. \ref{fig:err2}. No $H^1$ or $L^2$ errors are shown because we  have no analytical solution here. As expected, the a posteriori error estimator converges to the optimal order from the related theory, i.e., $\mathcal{O}(N^{-1/2})$.

\begin {figure}[h!]
\centering
\includegraphics[scale=0.55]{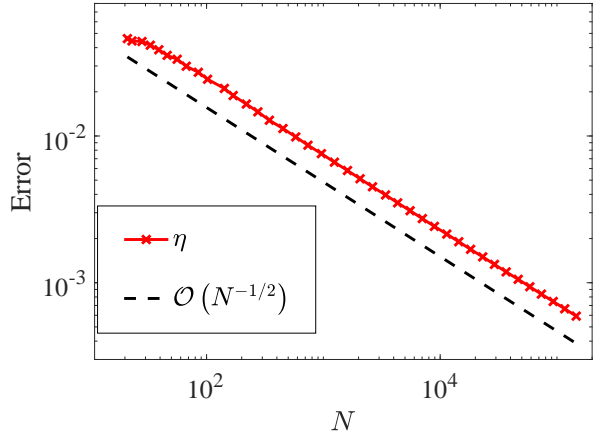}
\caption{The a posteriori error estimator $\eta$ for Example \ref{ex2} which converges to $\mathcal{O}\left(N^{-1/2}\right)$.
}
\label{fig:err2}
\end{figure}

{
\begin{example} \label{ex3}
In this example, we consider the effect of boundary layer or the thin Debye layer thickness, i.e., the general steady-state PNP equations \eqref{model2} as follows:
to find $p(\bm{x}),n(\bm{x}),\psi(\bm{x}) \in H_0^1({\Omega})$, such that
\begin{equation}\label{varepsilon}
\begin{cases}
\begin{array}{lll}
-\Delta p-\nabla\cdot(p\nabla\psi)=f_1, \\
-\Delta n+\nabla\cdot(n\nabla\psi)=f_2, \\
-\epsilon\Delta \psi-p+n=f_3,
\end{array}
\end{cases}
\end{equation}
with $0<\epsilon<1$, and $\Omega=[0,1]\times[0,1]$. We take
\begin{equation}\label{varepsilon1}
\begin{cases}
\begin{array}{lll}
p=e^{-x/\sqrt{\epsilon}}+e^{-y/\sqrt{\epsilon}},\\
n=e^{-2x/\sqrt{\epsilon}}+e^{-2y/\sqrt{\epsilon}}, \\
\psi=e^{-3x/\sqrt{\epsilon}}+e^{-3y/\sqrt{\epsilon}},
\end{array}
\end{cases}
\end{equation}
as an exact solution of Eq. \eqref{varepsilon}. Similarly to Example \ref{ex1}, the functions $f_1$, $f_2$ and $f_3$ on the RHS are determined by the exact solution. The boundary conditions on $\partial\Omega$ are then chosen to satisfy \eqref{varepsilon1}.
\end{example}

For a small value of $\epsilon$,  the exact solution \eqref{varepsilon1} predicts boundary layer effects near $x=0$ and $y=0$. As a comparison, we run the adaptive algorithm for $\epsilon=0.1$ and $0.01$ and show the results in Fig. \ref{fig:eps}.
In both cases, the consistent convergence of the $\epsilon$-dependent error and $\eta$ is observed as increasing $N$, i.e, $\mathcal{O}(N^{-1/2})$ (at the top). The corresponding mesh grids for typical degrees of freedom are presented at the bottom. The adaptivity can be demonstrated as more condensed grids near the boundaries for $\epsilon=0.01$ are visualized as compared to that of $\epsilon=0.1$.

}

\begin {figure}[h!]\label{fig:eps}
\centering
\includegraphics[scale=0.55]{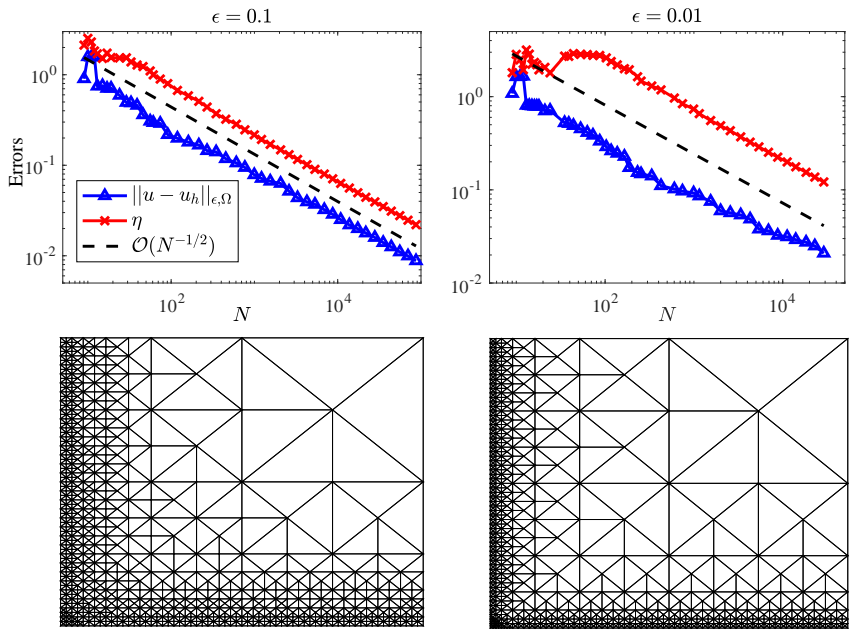}
\caption{The errors (top) and corresponding mesh grids (bottom) for different $\epsilon$. The degrees of freedom in the mesh grids are $N=606$ and $602$ for $\epsilon=0.1$ and $0.01$, respectively.}
\end{figure}

\section{Conclusions} \label{conclusions}
In this paper, {the residual-based a posteriori error estimator} has been adopted for the adaptive analysis of steady-state PNP equations where the nonlinearity and strong coupling are focused. During the theoretical study of the {a} posteriori error estimation, we have established the relationship between the a posteriori error estimator and the error of solution with the help of strategies from Ref. \cite{Verfuhrt},
so as to demonstrate the efficiency and reliability  of the error estimator.
%Borrowing the ideas from \cite{Verfuhrt} , we establish the relationship between the error and constructed posteriori error estimator.
Additionally, by taking $G$-derivatives of the nonlinear PNP equations, this paper has constructed a corresponding linear problem of which the solution existence and uniqueness have been proved.

We have successfully shown the rationality of theoretical conclusions by numerical results. The efficiency and reliability of the a posteriori error estimator are confirmed numerically in Example \ref{ex1}. The adaptive performances are given in Example \ref{ex2} and \ref{ex3} with  treating geometrical  singularities and boundary layer effects, respectively. Nevertheless, in view of more thorough investigations that have not been done here, we only consider this work as a very starting point of adaptive methods for the PNP system.
%As we mentioned below Eq. \eqref{model2}}, more rigorous analysis has to be done for proposing a consistent theory for the boundary layer effects, i.e., the small-$\epsilon$ or named thin Debye layer cases.
On the numerical side, to the authors' best knowledge, the convergence and stability of the entire adaptive algorithm for steady-state PNP has not been studied systematically, despite some of the existed methods \cite{yang,li2018analysis}. At last, the general estimation analysis  and the adaptive method for time-dependent PNP are of many more interests. Notably, the general adaptive method that includes the temporal adaptivity is more challenging and will be saved as our future goal.

%Example 4.1 verifies the r. Besides, the posteriori error estimator converges and convergence order is the optimal convergence order $O(N ^ {1/2})$.
%Since the PNP problem often encounters regional singularities and boundary layer problems, the paper gives adaptive mesh encryption results for the
%corner point singularities of L-shaped regions in Example 4.2 and boundary layer problems in example 4.3 respectively.
%It is shown that the adaptive algorithm can show its superiority in regional singularities and boundary layer problems.
%
%In the future work, the more rigorous steady-state PNP equation with small parameters and  adaptive
%analysis of time-dependent PNP equations will be our aim.

%\section{Acknowledgements}
%T. Hao and X. Xu acknowledge the financial support from NSFC (No. {11671302}). M. Ma acknowledges the financial support from NSFC (No. 11701428), ``Chen Guang'' project supported by Shanghai Municipal Education Commission and Shanghai Education Development Foundation, and the Fundamental Research Funds for the Central Universities.

%\section*{Appendix}
\appendix
%\appendixname
\section{Proof of solution existence and uniqueness}\label{app:existanduniq}
 %Within the proofs of Lemmas \ref{unique1}, according to Green formula and distribution property and the definition of G-derivative,
\begin{lemma} \label{lemma:poisson}
\cite{CIARLET199117} There exists a unique weak solution $u\in H_0^1(\Omega)$ of
\begin{equation}
	-\Delta u=f,\ in\ \Omega,\ u|_{\partial\Omega}=0,
\end{equation}
$\forall f\in L^2(\Omega)$, and $|u|_{1,\Omega}\leq C_p\|f\|_{L^2(\Omega)}$, where $C_p$ is a constant that depends on $\Omega$ only.
\end{lemma}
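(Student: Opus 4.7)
The plan is to prove this classical Poisson result via the Lax--Milgram (or equivalently Riesz representation) framework applied to the standard variational formulation. First I would recast the boundary value problem in weak form: find $u\in H_0^1(\Omega)$ such that
\[
a(u,v):=\int_\Omega \nabla u\cdot \nabla v\,d\bm{x}=\int_\Omega f v\,d\bm{x}=:L(v),\qquad \forall v\in H_0^1(\Omega).
\]
Any classical solution satisfies this identity after integration by parts using the Dirichlet boundary condition, and conversely a weak solution with sufficient regularity recovers the strong formulation.

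Next I would verify the hypotheses of the Lax--Milgram lemma on the Hilbert space $H_0^1(\Omega)$ equipped with the inner product $(u,v)\mapsto \int_\Omega \nabla u\cdot \nabla v\,d\bm{x}$ (which is equivalent to the full $H^1$ inner product thanks to the Poincar\'{e} inequality on the bounded Lipschitz domain $\Omega$: $\|v\|_{L^2(\Omega)}\le C_p |v|_{1,\Omega}$ for all $v\in H_0^1(\Omega)$, with $C_p$ depending only on $\Omega$). Continuity of $a(\cdot,\cdot)$ follows immediately from Cauchy--Schwarz, coercivity is trivial since $a(v,v)=|v|_{1,\Omega}^2$, and continuity of $L$ on $H_0^1(\Omega)$ comes from
\[
|L(v)|\le \|f\|_{L^2(\Omega)}\|v\|_{L^2(\Omega)}\le C_p\|f\|_{L^2(\Omega)}|v|_{1,\Omega}.
\]
Lax--Milgram then delivers the existence and uniqueness of $u\in H_0^1(\Omega)$ solving the variational problem. (Since $a$ is symmetric, one could equivalently invoke the Riesz representation theorem directly.)

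Finally, for the stated a priori bound I would take $v=u$ as the test function in the variational identity, obtaining
\[
|u|_{1,\Omega}^2=\int_\Omega f u\,d\bm{x}\le \|f\|_{L^2(\Omega)}\|u\|_{L^2(\Omega)}\le C_p\|f\|_{L^2(\Omega)}|u|_{1,\Omega},
\]
and dividing through by $|u|_{1,\Omega}$ (handling the trivial case $u\equiv 0$ separately) yields $|u|_{1,\Omega}\le C_p\|f\|_{L^2(\Omega)}$. There is no real obstacle here: the only mildly subtle point is ensuring that the constant appearing in the estimate is exactly the Poincar\'{e} constant of $\Omega$, which is why the same symbol $C_p$ used throughout the paper is appropriate and depends on $\Omega$ alone.
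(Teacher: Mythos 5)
Your proof is correct. The paper does not actually prove this lemma---it is quoted as a known result with a citation to Ciarlet's handbook article---so there is no in-paper argument to compare against; your Lax--Milgram (equivalently, Riesz representation) derivation with the Poincar\'{e} inequality supplying both the continuity of the right-hand side functional and the final bound $|u|_{1,\Omega}\leq C_p\|f\|_{L^2(\Omega)}$ is the standard and complete way to establish it, and correctly identifies $C_p$ as the Poincar\'{e} constant of $\Omega$, consistent with how the paper later reuses that same constant in the appendix.
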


%\begin{proof}
%Taking $v=u$ in the variational form $\int_\Omega\nabla u\cdot\nabla vd\bm{x} = \int_\Omega fvd\bm{x}$, we obtain $|u|^2_{1,\Omega}\leq \|f\|_{L^2(\Omega)}\|u\|_{L^2(\Omega)}$. With the Poincar\'{e} inequality $\|w\|_{L^2(\Omega)}\leq C_p|w|_{1,\Omega},\ \forall w\in H_0^1(\Omega)$, we have  $|u|_{1,\Omega}\leq C_p\|f\|_{L^2(\Omega)}$, where $C_p$ is the Poincar\'{e} constant that only depends on $\Omega$.
%\end{proof}

%{ For the Problem $-\Delta u=f,\ in\ \Omega,\ u|_{\partial\Omega}=0,\forall f\in L^2(\Omega)$, there exists a unique solution\cite{Gilbarg}. Meanwhile, we have $|u|_{1,\Omega}\leq C_p\|f\|_{L^2(\Omega)}$ based on  Poincare inequality $\|w\|_{L^2(\Omega)}\leq C_p|w|_{1,\Omega},\ \forall w\in H_0^1(\Omega)$ where $C_p$ is the Poincare Constants. }

\begin{lemma}(Schauder Fixed Point Theorem \cite{Zeidler})
For a Banach space X and a bounded closed convex subset $K\subset X$, if T is a fully continuous operator from K to itself, there exists $\bm{x}\in K$ satisfying $T\bm{x}=\bm{x}$.
\end{lemma}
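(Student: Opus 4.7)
The plan is to reduce the infinite-dimensional fixed point problem to a sequence of finite-dimensional problems where Brouwer's fixed point theorem applies, and then extract a genuine fixed point by a compactness argument. This is the classical Schauder projection approach.

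First I would exploit the complete continuity of $T$ together with the boundedness of $K$ to conclude that $\overline{T(K)}$ is compact. Hence, for each integer $n\geq 1$, one can select a finite $1/n$-net $\{y_1^{(n)},\dots,y_{k_n}^{(n)}\}\subset T(K)$. With these points in hand, I would introduce the Schauder projection on $\overline{T(K)}$ defined by
\[
P_n(y)=\frac{\sum_{i=1}^{k_n}\mu_i^{(n)}(y)\,y_i^{(n)}}{\sum_{i=1}^{k_n}\mu_i^{(n)}(y)},\qquad \mu_i^{(n)}(y)=\max\!\left(0,\tfrac{1}{n}-\|y-y_i^{(n)}\|\right).
\]
The denominator is strictly positive because the $y_i^{(n)}$ form a $1/n$-net, and $P_n$ is continuous with $\|P_n(y)-y\|\leq 1/n$. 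Since $K$ is convex and each $y_i^{(n)}\in T(K)\subseteq K$, the image $P_n(\overline{T(K)})$ lies inside the compact convex finite-dimensional set $K_n:=\mathrm{conv}(y_1^{(n)},\dots,y_{k_n}^{(n)})\subseteq K$.

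Next I would consider the composition $T_n:=P_n\circ T$ restricted to $K_n$, which is a continuous self-map of $K_n$. Because $K_n$ is homeomorphic to a closed ball in a Euclidean space, Brouwer's fixed point theorem furnishes some $x_n\in K_n$ with $T_n x_n=x_n$.

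The main obstacle, and the decisive step, is the passage to the limit. By compactness of $\overline{T(K)}$, a subsequence $Tx_{n_j}$ converges to some $x^*\in\overline{T(K)}\subseteq K$. The estimate
\[
\|x_{n_j}-Tx_{n_j}\|=\|P_{n_j}(Tx_{n_j})-Tx_{n_j}\|\leq \tfrac{1}{n_j}\to 0
\]
then forces $x_{n_j}\to x^*$, and continuity of $T$ finally delivers $Tx^*=x^*$. The subtle point throughout is ensuring that $P_n$ actually takes values in $K$ so that $T_n$ is a legitimate self-map of a finite-dimensional convex set; this is exactly where the convexity of $K$ (and its closedness, to ensure $x^*\in K$) enters essentially.
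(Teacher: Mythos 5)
The paper does not prove this lemma at all: it is quoted as a known result and attributed to Zeidler \cite{Zeidler}, so there is no in-paper argument to compare against. Your proof is the standard Schauder-projection reduction to Brouwer's theorem and is correct; it is essentially the classical argument found in the cited reference. The only point worth tightening is that the denominator of $P_n$ is guaranteed strictly positive on $T(K)$ (where a $1/n$-net of $T(K)$ gives a strict inequality $\|y-y_i^{(n)}\|<1/n$ for some $i$), which is all you actually use since $T_n=P_n\circ T$ only evaluates $P_n$ on $T(K)$; the parenthetical claim that $P_n$ is defined on all of $\overline{T(K)}$ would require choosing the net for the closure.
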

According to Green formula and the definition of $G$-derivative, Eq. \eqref{modelDF} is rewritten as follows:
{$\forall {\bm{R}}=(F_1,F_2,F_3)\in (L^2(\Omega))^3$, to find ${\bm{\phi}}=(\phi_1,\phi_2,\phi_3)\in (H_0^1(\Omega))^3$, such that,
\begin{equation}\label{Q}
\left\{
\begin{array}{lll}
-\nabla\cdot(\nabla \phi_1+p\nabla\phi_3+\phi_1\nabla \psi)=F_1, \\
-\nabla\cdot( \nabla \phi_2-n\nabla\phi_3-\phi_2\nabla \psi)=F_2, \\
-\epsilon \Delta \phi_3-\phi_1+\phi_2=F_3.
\end{array}
\right.
\end{equation}

\underline{\it Solution Existence:}	\label{app:exist}

We define a bounded closed convex subset of $L^2(\Omega)\times  L^2(\Omega)$ as,
$$
K=\{(\widetilde{f}_1,\widetilde{f}_2) |\ \|\nabla \widetilde{f}_i\|_{L^2(\Omega)}\leq \beta_0 \,\ \textup{and}\ \widetilde{f}_i|_{\partial \Omega}=0,\ i=1,2 \},
$$
with the constant $\beta_0$ to be determined later. Next, we aim to find a mapping $M$ from $K$ into itself.

It's  straightforward to have that $\forall (\widetilde{\phi}_1,\widetilde{\phi}_2)\in K$ and $\forall F_3\in L^2(\Omega)$, $\widetilde{\phi}_3=\widetilde{\phi}_3(\widetilde{\phi}_1,\widetilde{\phi}_2)$ is a unique solution in $H_0^1(\Omega)$ of the following equation,% \eqref{1cvlinearquestion} ,
\begin{equation}\label{1cvlinearquestion}
< \epsilon\nabla \widetilde{\phi}_3,\nabla v_3>-<\widetilde{\phi}_1-\widetilde{\phi}_2,v_3>=< F_3,v_3> ,\quad \forall v_3\in H_0^1(\Omega).
 \end{equation}
The decoupled variational boundary-value problems \eqref{2cvlinearquestion} for $\phi_1$, $\phi_2\in H_0^1(\Omega)$ are considered now,
\begin{equation}\label{2cvlinearquestion}
\left\{
\begin{array}{ll}
<\nabla \phi_1+p\nabla\widetilde{\phi}_3+\widetilde{\phi}_1\nabla\psi,\nabla v_1> = <F_1,v_1>,\quad\forall v_1\in H_0^1(\Omega), \\
<\nabla \phi_2-n\nabla\widetilde{\phi}_3-\widetilde{\phi}_2\nabla\psi,\nabla v_2> = < F_2, v_2>,\quad\forall v_2\in H_0^1(\Omega),
\end{array}
\right.
\end{equation}
where $F_1,\ F_2\in L^2(\Omega)$.
Owing to the fact that $p,\ n\in H^1(\Omega)$ and $-\epsilon\Delta\psi=p-n+f_3$, we have $ \nabla\cdot(p\nabla\widetilde{\phi}_3+\widetilde{\phi}_1\nabla\psi)\in L^2(\Omega)$ and $ \nabla\cdot(n\nabla\widetilde{\phi}_3+\widetilde{\phi}_2\nabla\psi)\in L^2(\Omega)$. Referring to the solution existence and uniqueness of Eqs. \eqref{2cvlinearquestion}, we express the mapping $M$ as
\begin{equation}
(\phi_1,\phi_2)=M(\widetilde{\phi}_1,\widetilde{\phi}_2).
\end{equation}

After taking $v_1=\phi_1$ in the first equation of \eqref{2cvlinearquestion}, we have
\begin{align}
&<\nabla \phi_1,\nabla\phi_1>\nonumber\\
=&<F_1,\phi_1>-<p\nabla\widetilde{\phi}_3+\widetilde{\phi}_1\nabla\psi,\nabla\phi_1>\nonumber\\
\leq& \|F_1\|_{L^2(\Omega)}\|\phi_1\|_{L^2(\Omega)}+\|p\|_{L^{\infty}(\Omega)}\|\nabla\widetilde{\phi}_3\|_{L^2(\Omega)}\|\nabla \phi_1\|_{L^2(\Omega)} \nonumber\\
&+\sqrt{2}\|\nabla\psi\|_{L^{\infty}(\Omega)}\|\widetilde{\phi}_1\|_{{L^2(\Omega)}}\|\nabla\phi_1\|_{{L^2(\Omega)}}\nonumber\\
\leq& C_p\|F_1\|_{L^2(\Omega)}\|\nabla\phi_1\|_{L^2(\Omega)}+\|p\|_{L^{\infty}(\Omega)}\|\nabla\widetilde{\phi}_3\|_{L^2(\Omega)}\|\nabla \phi_1\|_{L^2(\Omega)}	\nonumber\\
&+\sqrt{2}C_p\|\nabla\psi\|_{L^{\infty}(\Omega)}\|\nabla\widetilde{\phi}_1\|_{{L^2(\Omega)}}\|\nabla\phi_1\|_{{L^2(\Omega)}}\nonumber\\
\leq& C_p(\|F_1\|_{L^2(\Omega)}\|\nabla\phi_1\|_{L^2(\Omega)}+{\|p\|_{L^{\infty}(\Omega)}\epsilon^{-1}}\|F_3+\widetilde{\phi}_1-\widetilde{\phi}_2\|_{L^2(\Omega)}\|\nabla \phi_1\|_{L^2(\Omega)}	\nonumber\\
&+\sqrt{2}\|\nabla\psi\|_{L^{\infty}(\Omega)}\|\nabla\widetilde{\phi}_1\|_{{L^2(\Omega)}}\|\nabla\phi_1\|_{{L^2(\Omega)}})\nonumber\\
\leq& C_p\left(\|F_1\|_{L^2(\Omega)}+\|p\|_{L^{\infty}(\Omega)}\epsilon^{-1}\|F_3\|_{L^2(\Omega)}\right)\|\nabla\phi_1\|_{L^2(\Omega)}+ \left[C_p^2\epsilon^{-1}\|p\|_{L^{\infty}(\Omega)}(\|\nabla\widetilde{\phi}_1\|_{{L^2(\Omega)}} \right.\nonumber\\
&\left.+\|\nabla\widetilde{\phi}_2\|_{{L^2(\Omega)}})	%\nonumber\\
+\sqrt{2}C_p\|\nabla\psi\|_{L^{\infty}(\Omega)}\|\nabla\widetilde{\phi}_1\|_{{L^2(\Omega)}}  \right]\|\nabla\phi_1\|_{L^2(\Omega)}, \label{ineq:phi1}
\end{align}
where in the second and fourth inequalities the Poincar\'{e} inequality is applied with the constant $C_p$ the same as in Lemma \ref{lemma:poisson}, and Eq. \eqref{1cvlinearquestion} is used in the third inequality.

Recall that $\|\nabla \widetilde{\phi}_1\|_{L^2(\Omega)}\leq \beta_0$ and $\|\nabla \widetilde{\phi}_2\|_{L^2(\Omega)}\leq \beta_0$, we restrict $\|\nabla \phi_1\|_{L^2(\Omega)}\leq\beta_0$ by using inequality \eqref{ineq:phi1}, i.e.,
\begin{align}
&\|\nabla \phi_1\|_{L^2(\Omega)} \nonumber\\
\leq& C_p\Big(\|F_1\|_{L^2(\Omega)}+\|p\|_{L^{\infty}(\Omega)}\epsilon^{-1}\|F_3\|_{L^2(\Omega)}\Big)
+\left(2C_p^2\epsilon^{-1}\|p\|_{L^{\infty}(\Omega)}+\sqrt{2}C_p\|\nabla\psi\|_{L^{\infty}(\Omega)}\right)\beta_0\nonumber\\
\leq& \beta_0,
\end{align}
which leads to
\begin{equation}\label{beta}
\beta_0>\frac{C_p\left(\|F_1\|_{L^2(\Omega)}+\epsilon^{-1}\|p\|_{L^{\infty}(\Omega)}\|F_3\|_{L^2(\Omega)}\right)}{1-2C_p^2\epsilon^{-1}\|p\|_{L^{\infty}(\Omega)}
-\sqrt{2}C_p\|\nabla\psi\|_{L^{\infty}(\Omega)}} = \beta_1,
\end{equation}
with requiring
\begin{equation}
1-2C_p^2\|p\|_{L^{\infty}(\Omega)}\epsilon^{-1}
-\sqrt{2}C_p\|\nabla\psi\|_{L^{\infty}(\Omega)}>0.
\end{equation}
Similarly, $\|\nabla \phi_2\|_{L^2(\Omega)}\leq \beta_0$ gives
\begin{equation}
	\beta_0>\frac{C_p\left(\|F_2\|_{L^2(\Omega)}+\epsilon^{-1}\|n\|_{L^{\infty}(\Omega)}\|F_3\|_{L^2(\Omega)}\right)}{1-2C_p^2\epsilon^{-1}\|n\|_{L^{\infty}(\Omega)}
-\sqrt{2}C_p\|\nabla\psi\|_{L^{\infty}(\Omega)}}= \beta_2,
\end{equation}
with  requiring
\begin{equation}1-2C_p^2\epsilon^{-1}\|n\|_{L^{\infty}(\Omega)}
-\sqrt{2}C_p\|\nabla\psi\|_{L^{\infty}(\Omega)}>0.
\end{equation}

Thus, when
\begin{equation} \label{condexist}
	\|\nabla\psi\|_{L^{\infty}(\Omega)}+\sqrt{2}C_p{\epsilon^{-1}}\max\{\|p\|_{L^{\infty}(\Omega)}, \|n\|_{L^{\infty}(\Omega)}  \}<1 \left /\left(\sqrt{2}C_p\right)\right.,
\end{equation}
the constant $\beta_0>\max\{\beta_1,\beta_2\}$ can ensure that $K$ is a bounded closed convex subset of $H_0^1(\Omega)\times  H_0^1(\Omega)$ and $M$ is a mapping from $K$ to itself, i.e., $M(K)\subset K$.

As a next step, we prove that the mapping $M$ is compact and  continuous, that is, a fully continuous operator.

%First, we prove that T is a compact operator, that is T maps{  a bounded set in $H^1(\Omega)\times H^1(\Omega)$ to a compact set $L^2(\Omega)\times L^2(\Omega)$}.
\textit{Compactness}. From Eq. \eqref{2cvlinearquestion} where $F_1+\nabla\cdot(p\nabla\widetilde{\phi}_3+\widetilde{\phi}_1\nabla\psi )\in L^2(\Omega),$ we have,  $\forall \ (\widetilde{\phi}_1,\widetilde{\phi}_2)\in K$, $\exists\ C_1>0$, s.t., $\|F_1+\nabla\cdot(p\nabla\widetilde{\phi}_3+\widetilde{\phi}_1\nabla\psi)\|_{{L^2(\Omega)}}\leq C_1 $, and hence
\begin{align}\label{5}
<\nabla \phi_1,\nabla\phi_1>
=&<F_1 + \nabla\cdot(p\nabla\widetilde{\phi}_3+\widetilde{\phi}_1\nabla\psi),\phi_1>\nonumber\\
\leq& \|F_1 + \nabla\cdot(p\nabla\widetilde{\phi}_3+\widetilde{\phi}_1\nabla\psi)\|_{{L^2(\Omega)}}\|\phi_1\|_{{L^2(\Omega)}} \nonumber\\
\leq& C_1C_p|\phi_1|_{1,\Omega}, 
\end{align}
i.e., $|\phi_1|_{1,\Omega}\leq C_1C_p$, and similarly, $|\phi_2|_{1,\Omega}\leq C_2C_p$, where $C_1$, $C_2$ and $C_p$ are independent of $\phi_1$ and $\phi_2$.
Consequently, $M$ maps a bounded set in  $H_0^1(\Omega)\times  H_0^1(\Omega)$ into a bounded set in
 $H_0^1(\Omega)\times  H_0^1(\Omega)$ which is compactly embedded in ${{L^2(\Omega)}}\times {{L^2(\Omega)}}$.
 %{Since $H^1_0(\Omega)$ is compactly embedded in  ${{L^2(\Omega)}}$,} $M$ maps a bounded set in  $H_0^1(\Omega)\times  H_0^1(\Omega)$ to a compact set in ${{L^2(\Omega)}}\times {{L^2(\Omega)}}$ and
Thus, $M$ is a compact operator.

\textit{Continuity}. Taking $(\widetilde{\phi}_1,\widetilde{\phi}_2)=(\widetilde{\phi}'_1,\widetilde{\phi}'_2)$ and
$(\widetilde{\phi}''_1, \widetilde{\phi}''_2)\in H_0^1(\Omega)\times  H_0^1(\Omega)$ in Eq. \eqref{1cvlinearquestion},  we have $\widetilde{\phi}'_3 = \widetilde{\phi}'_3(\widetilde{\phi}'_1,\widetilde{\phi}'_2)$ and $\widetilde{\phi}''_3=\widetilde{\phi}''_3(\widetilde{\phi}''_1,\widetilde{\phi}''_2)$, respectively. And by Eq. \eqref{2cvlinearquestion}, we have
{$(\phi'_1,\phi'_2)=M(\widetilde{\phi}'_1,\widetilde{\phi}'_2)$ and $(\phi_1'',\phi_2'')=M(\widetilde{\phi}''_1,\widetilde{\phi}''_2)$},
%For the operator $T$, according to the Problem \ref{2cvlinearquestion} and Young inequality, there is
then,
\begin{align}\label{9}
&<\nabla (\phi'_1-\phi_1''),\nabla (\phi_1'-\phi_1'')>\nonumber\\
=&-<p\nabla(\widetilde{\phi}_3''-\widetilde{\phi}'_3)+\nabla\psi(\widetilde{\phi}_1''-\widetilde{\phi}'_1),\nabla (\phi'_1-\phi_1'')> \nonumber\\
\leq& \|p\nabla(\widetilde{\phi}_3''-\widetilde{\phi}'_3)+\nabla\psi(\widetilde{\phi}_1''-\widetilde{\phi}_1')\|_{{{L^2(\Omega)}}}\|\nabla (\phi_1'-\phi_1'')\|_{{{L^2(\Omega)}}}. 
%\leq& \frac{1}{2\epsilon_0}\|\nabla\cdot(p\nabla(\widetilde{\phi}_3''-\widetilde{\phi}_3)+\nabla\psi(\widetilde{\phi}_1'-\widetilde{\phi}_1))\|_{{{L^2(\Omega)}}}^2+
%\frac{\epsilon_0}{2} \|\nabla (\phi_1-\phi_1')\|_{{{L^2(\Omega)}}}^2,
\end{align}
%{For $\|\nabla\cdot[p\nabla(\widetilde{\phi}_3'-\widetilde{\phi}_3)+\nabla\psi(\widetilde{\phi}_1'-\widetilde{\phi}_1)]\|_{{{L^2(\Omega)}}}^2$,}
Furthermore, from \eqref{1cvlinearquestion}  we get
\begin{equation}
\left\{
\begin{array}{lll}
&-{\epsilon}\Delta (\widetilde{\phi}_3'-{\widetilde{\phi}_3}'')=(\widetilde{\phi}_1'-{\widetilde{\phi}_1}'')-(\widetilde{\phi}_2'-{\widetilde{\phi}_2}''),\ \textup{in}\ \Omega ,  \\
&\widetilde{\phi}_3' - {\widetilde{\phi}_3}''=0,\ \textup{on}\ \partial\Omega .
\end{array}
\right.
\end{equation}
 Now, in the RHS of inequality \eqref{9},
\begin{align}\label{8}
&\int_{\Omega}(p\nabla(\widetilde{\phi}_3''-\widetilde{\phi}_3')+\nabla\psi(\widetilde{\phi}_1''-\widetilde{\phi}_1'))^2d\bm{x}\nonumber	\\
\leq& 2 \max\{\|p\|_{L^{\infty}(\Omega)}^2,\|\psi\|_{W^{1,\infty}}^2\}\int_{\Omega}[(\nabla (\widetilde{\phi}_3''-\widetilde{\phi}_3'))^2+(\widetilde{\phi}_1''-\widetilde{\phi}_1')^2] d\bm{x}	\nonumber\\
\leq& 2\max\{\|p\|_{L^{\infty}(\Omega)}^2,\|\psi\|_{W^{1,\infty}}^2\}({2C_p^2\epsilon^{-1}}+1)(\|\widetilde{\phi}_1'-\widetilde{\phi}_1''\|_{L^2(\Omega)}^2
+\|\widetilde{\phi}_2'-\widetilde{\phi}_2''\|_{{{L^2(\Omega)}}}^2),	%\nonumber\\
%\lesssim& \|\widetilde{\phi}_1-\widetilde{\phi}_1'\|_{1,\Omega}^2+\|\widetilde{\phi}_2-\widetilde{\phi}_2'\|_{{{L^2(\Omega)}}}^2.
\end{align}
{where in the second inequality Lemma \ref{lemma:poisson} is applied.}
With \eqref{8} plugged into \eqref{9}, we have,
\begin{equation}\label{9-app}
|\phi_1'-\phi_1''|_{1,\Omega}\lesssim \|\widetilde{\phi}_1'-\widetilde{\phi}_1''\|_{L^2(\Omega)}+\|\widetilde{\phi}_2'-\widetilde{\phi}_2''\|_{{{L^2(\Omega)}}},
\end{equation}
and analogously,
\begin{equation}\label{10}
|\phi_2'-\phi_2''|_{1,\Omega}\lesssim\|\widetilde{\phi}_2'-\widetilde{\phi}''_2\|_{L^2(\Omega)}+\|\widetilde{\phi}_1'-\widetilde{\phi}''_1\|_{{{L^2(\Omega)}}}.
\end{equation}
Thus,
\begin{equation}\label{10-2}
\|\phi_1'-\phi_1''\|_{1,\Omega}+\|\phi_2'-\phi_2''\|_{1,\Omega}\leq
C\left(\|\widetilde{\phi}_2'-\widetilde{\phi}_2''\|_{1,\Omega}+\|\widetilde{\phi}_1'-\widetilde{\phi}_1''\|_{1,\Omega}\right),
\end{equation}
%With \eqref{10-2}, $\forall \varepsilon >0,\exists \delta=\frac{\varepsilon}{C}>0$, when $\|\widetilde{\phi}_2-\widetilde{\phi}_2'\|_{1,\Omega}+\|\widetilde{\phi}_1-\widetilde{\phi}_1'\|_{1,\Omega}<\delta$, $\|\phi_1-\phi_1'\|_{1,\Omega}+\|\phi_2-\phi_2'\|_{1,\Omega}< \varepsilon$, then
with the constant $C$ independent of $\phi_i'$, $\phi_i''$, $\widetilde{\phi}_i'$ and $\widetilde{\phi}_i''$ ($i=1,2$), i.e., $M$ is continuous.

In summary, we can identify a bounded  convex closed set $K\subset H_0^1(\Omega)\times  H_0^1(\Omega)\subset L^2(\Omega)\times L^2(\Omega)$, with a fully continuous mapping  $M$ of $K$ to itself, {i.e., $M(K)\subset K$}. The Schauder fixed-point theorem is then applied to show that there exists $(\widehat{\phi}_1,\widehat{\phi}_2)\in K$ satisfying
\begin{equation}
	M(\widehat{\phi}_1,\widehat{\phi}_2)=(\widehat{\phi}_1,\widehat{\phi}_2).
\end{equation}
Recall in \eqref{Q}, $\phi_3$ can be solved uniquely with taking $(\phi_1,\phi_2)=(\widehat{\phi}_1,\widehat{\phi}_2)\in K\subset L^2(\Omega)\times L^2(\Omega)$, i.e., $\phi_3=\widehat{\phi}_3(\widehat{\phi}_1,\widehat{\phi}_2)\in H_0^1(\Omega)$.
%satisfies
%\begin{equation}\label{10-3}
%( \nabla \phi_3,\nabla v_3)-(\phi_1-\phi_2,v_3)=(F_3,v_3),\ \forall v_3\in H_0^1(\Omega).
%\end{equation}
%With setting $(\phi_1,\phi_2)=(\widehat{\phi}_1,\widehat{\phi}_2)\in K\subset L^2(\Omega)\times L^2(\Omega)$ , there exists a unique solution $\phi_3=\widehat{\phi}_3(\widehat{\phi}_1,\widehat{\phi}_2)\in H_0^1(\Omega)$ of Eq. \eqref{10-3}.
%and similarly to analysis in \eqref{4}.
%\begin{equation}
%	\sup\limits_{\Omega}| \phi_3 |\leq C\|f_3+\widehat{\phi}_1-\widehat{\phi}_2\|_{L^2(\Omega)}<\infty.
%\end{equation}
Therefore, $(\widehat{\phi}_1,\widehat{\phi}_2,\widehat{\phi}_3)$ is a solution of
\eqref{modelDF}.
%\end{proof}
%\qed

\underline{\it Solution Uniqueness:}	\label{app:unique}
%According to \eqref{4} with Green formula and distribution property,
%\begin{equation}\label{Q}
%\begin{cases}
%\begin{array}{rcl}
%&-\nabla\cdot(\nabla \phi_1+p\nabla\phi_3+\phi_1\nabla \psi)=F_1 \\
%&-\nabla\cdot( \nabla \phi_2-n\nabla\phi_3-\phi_2\nabla \psi)=F_2 \\
%&-\Delta \phi_3-\phi_1+\phi_2=F_3
%\end{array}
%\end{cases}
%\end{equation}

Assuming both ${\bm{\phi}}'=(\phi_1',\phi_2',\phi_3')$ and ${\bm{\phi}}''=(\phi_1'',\phi_2'',\phi_3'')$ satisfy \eqref{Q}, we prove $\phi_1'\equiv\phi_1''$ and $\phi_2'\equiv\phi_2''$ as following, and thus $\phi_3'\equiv\phi_3''$.

Let $\widetilde{\bm{v}}=(v_1,v_2,0)$, $\forall\  (v_1,v_2)\in H_0^1(\Omega)\times H_0^1(\Omega)$, if $\bm{\phi} = (\phi_1, \phi_2, \phi_3)$ is a solution of Eq.\eqref{Q}, we have
\begin{align}
	&<\bm{DF}(\bm{u})\bm{\phi}-\bm{R},\widetilde{\bm{v}}>\nonumber \\
=&<\nabla \phi_1+\phi_1\nabla\psi,\nabla v_1>-<\nabla\cdot(p\nabla \phi_3),v_1>-<F_1,v_1>  \nonumber\\
&+<\nabla \phi_2-\phi_2\nabla\psi,\nabla v_2>+<\nabla\cdot(n\nabla \phi_3),v_2>-<F_2,v_2>\nonumber \\
= & 0, 
\end{align}
%Thus, for $\widetilde{\bm{v}}'=(v'_1,v'_2,0)$,  $\widetilde{\bm{v}}''=(v''_1,v''_2,0)$, $\forall (v'_1,v'_2),(v''_1,v''_2) \in H_0^1(\Omega)\times H_0^1(\Omega)$, we have
and
\begin{equation}
	<\bm{DF}(\bm{u})\bm{\phi}'-\bm{R},\widetilde{\bm{v}}> = <\bm{DF}(\bm{u})\bm{\phi}''-\bm{R},\widetilde{\bm{v}}> =0.
\end{equation}

We define $\Phi_1=\phi_1'-\phi_1'',\ \Phi_2=\phi_2'-\phi_2'',\ \Phi_3=\phi_3'-\phi_3''$, and set $\widetilde{\bm{v}}= \bm{\Phi} = (\Phi_1,\Phi_2,0)$, then have,
\begin{align}
	&<\bm{DF}(\bm{u})\bm{\phi}'-\bm{R}, \bm{\Phi}>-<\bm{DF}(\bm{u})\bm{\phi}''-\bm{R}, \bm{\Phi}> \nonumber\\
%=&<\nabla \Phi_1,\nabla \Phi_1>+<\nabla \Phi_2,\nabla \Phi_2>+<\Phi_1\nabla\psi,\nabla \Phi_1>+<-\Phi_2\nabla\psi,\nabla \Phi_2>	\nonumber\\
%&-<p\nabla \Phi_3,\nabla\Phi_1>+<n\nabla \Phi_3,\nabla\Phi_2>	\nonumber\\
=&<\nabla \Phi_1,\nabla \Phi_1>+<\nabla \Phi_2,\nabla \Phi_2>+<\Phi_1\nabla\psi,\nabla \Phi_1>+<-\Phi_2\nabla\psi,\nabla \Phi_2>	\nonumber\\
&+<\nabla \Phi_3,p\nabla\Phi_1 - n\nabla\Phi_2>.	\label{exists4}
\end{align}
The last three terms in \eqref{exists4} are considered first, which are
\begin{align}
&|<\Phi_1\nabla\psi,\nabla \Phi_1>+<-\Phi_2\nabla\psi,\nabla \Phi_2>|\nonumber\\
%\leq& \|\nabla\psi\|_{L^{\infty}(\Omega)}|<\Phi_1,\nabla \Phi_1>|+\|\nabla\psi\|_{L^{\infty}(\Omega)}|<\Phi_2,\nabla \Phi_2>|\\
\leq& \sqrt{2}\|\nabla\psi\|_{L^{\infty}(\Omega)}\left(\|\Phi_1\|_{L^2(\Omega)}\|\nabla\Phi_1\|_{L^2(\Omega)}
+\|\Phi_2\|_{L^2(\Omega)}\|\nabla\Phi_2\|_{L^2(\Omega)} \right)	\nonumber\\
\leq& \frac{\sqrt{2}}{2}\|\nabla\psi\|_{L^{\infty}(\Omega)}\left(\|\Phi_1\|_{1,\Omega}^2+\|\Phi_2\|_{1,\Omega}^2\right),	\label{ineq:3terms1}
\end{align}
and,
\begin{align}
&|<\nabla \Phi_3, p\nabla\Phi_1 - n\nabla\Phi_2>|	\nonumber\\
\leq& \|\nabla \Phi_3\|_{{{L^2(\Omega)}}}\|p\nabla\Phi_1 - n\nabla\Phi_2\|_{{{L^2(\Omega)}}}	\nonumber\\
\leq&{ C_p\epsilon^{-1}}\|\Phi_1-\Phi_2\|_{{{L^2(\Omega)}}}\cdot \max\{ \|p\|_{L^{\infty}(\Omega)}, \|n\|_{L^{\infty}(\Omega)}\}{\|}|\nabla\Phi_1|+ |\nabla\Phi_2|{\|}_{{{L^2(\Omega)}}}	\nonumber\\
\leq& \frac{1}{2}C_p \epsilon^{-1}\max\{ \|p\|_{L^{\infty}(\Omega)},\|n\|_{L^{\infty}(\Omega)}\}(\|\Phi_1-\Phi_2\|_{{{L^2(\Omega)}}}^2+{\|}|\nabla\Phi_1|+|\nabla\Phi_2|{\|}_{{{L^2(\Omega)}}}^2)	\nonumber\\
\leq& C_p\epsilon^{-1}\max\{ \|p\|_{L^{\infty}(\Omega)},\|n\|_{L^{\infty}(\Omega)}\}\int_{\Omega}(|\Phi_1|^2+|\Phi_2|^2+|\nabla\Phi_1|^2+|\nabla\Phi_2|^2)d\bm{x}	\nonumber\\
=& C_p \epsilon^{-1}\max\{ \|p\|_{L^{\infty}(\Omega)},\|n\|_{L^{\infty}(\Omega)}\}(\|\Phi_1\|_{1,\Omega}^2+\|\Phi_2\|_{1,\Omega}^2),	\label{ineq:3terms2}
\end{align}
where in the second inequality Lemma \ref{lemma:poisson} is applied.

For the first and second terms on the RHS of \eqref{exists4},   the Poincar\'{e} inequality gives $\|\Phi_i\|_{1,\Omega}^2\leq (1+C_p^2)\|\nabla\Phi_i\|_{L^2(\Omega)}^2,\ i=1,2$, together with  \eqref{ineq:3terms1} and \eqref{ineq:3terms2} we have
\begin{align}
&|<\bm{DF}(\bm{u})\bm{\phi}'-\bm{R}, \bm{\Phi}>-<\bm{DF}(\bm{u})\bm{\phi}''-\bm{R}, \bm{\Phi}>| \nonumber \\
\geq& <\nabla \Phi_1,\nabla \Phi_1>+<\nabla \Phi_2,\nabla \Phi_2>	\nonumber\\
&-|<\Phi_1\nabla\psi,\nabla \Phi_1>+<-\Phi_2\nabla\psi,\nabla \Phi_2>|-|<\nabla \Phi_3, p\nabla\Phi_1 - n\nabla\Phi_2>|	\nonumber\\
\geq& \left[{1}/{(1+C_p^2)} - \left({\sqrt{2}}/{2}\|\nabla\psi\|_{L^{\infty}(\Omega)}+{C_p\epsilon^{-1}} \max\{ \|p\|_{L^{\infty}(\Omega)},\|n\|_{L^{\infty}(\Omega)}\}\right)\right] \nonumber\\
&\cdot (\|\Phi_1\|_{1,\Omega}^2+\|\Phi_2\|_{1,\Omega}^2).	
\end{align}
%Thus,
%\begin{align}\label{exists5}
%&|<\Phi_1\nabla\psi,\nabla \Phi_1>+<-\Phi_2\nabla\psi,\nabla \Phi_2>+<\nabla \Phi_3,p\nabla\Phi_1+n\nabla\Phi_2>|\\
%\leq& |<\Phi_1\nabla\psi,\nabla \Phi_1>+<-\Phi_2\nabla\psi,\nabla \Phi_2>|+|<\nabla \Phi_3,p\nabla\Phi_1+n\nabla\Phi_2>| 	\nonumber\\
%\leq&
% \left(\frac{\sqrt{2}}{2}\|\nabla\psi\|_{L^{\infty}(\Omega)}+C_p \max\{ \|p\|_{L^{\infty}(\Omega)},\|n\|_{L^{\infty}(\Omega)}\} \right) \Big(\|\Phi_1\|_{1,\Omega}^2+\|\Phi_2\|_{1,\Omega}^2\Big).	\nonumber
%\end{align}
%
%According to \eqref{exists3},\eqref{exists4},\eqref{exists5},and $\|\Phi_i\|_{1,\Omega}^2\leq (1+(C_p)^2)\|\nabla\Phi_i\|_{L^2(\Omega)}^2,i=1,2$,
%\begin{align}
%&|<\bm{A}(\phi_1',\phi_2'),(\Phi_1,\Phi_2)>-<\bm{A}(\phi_1'',\phi_2''),(\Phi_1,\Phi_2)>|\\
%\geq& <\nabla \Phi_1,\nabla \Phi_1>+<\nabla \Phi_2,\nabla \Phi_2>-|<\Phi_1\nabla\psi,\nabla \Phi_1>+<-\Phi_2\nabla\psi,\nabla \Phi_2>	\nonumber\\
%&+<p\nabla \Phi_3,\nabla\Phi_1>+<n\nabla \Phi_3,\nabla\Phi_2>|	\nonumber\\
%\geq& \left[\frac{1}{1+C_p^2} - \left(\frac{\sqrt{2}}{2}\|\nabla\psi\|_{L^{\infty}(\Omega)}+C_p \max\{ \|p\|_{L^{\infty}(\Omega)},\|n\|_{L^{\infty}(\Omega)}\}\right)\right] \left( \sum\limits_{i=1}^2 \|\Phi_i\|_{1,\Omega}^2\right)	\nonumber
%\end{align}
Obviously, when ${1}/{(1+C_p^2)}-({\sqrt{2}}/{2}\|\nabla\psi\|_{L^{\infty}(\Omega)}+C_p \epsilon^{-1}\max\{ \|p\|_{L^{\infty}(\Omega)},\|n\|_{L^{\infty}(\Omega)}\})>0$,
we have
$\|\Phi_1\|_{1,\Omega}^2+\|\Phi_2\|_{1,\Omega}^2\leq 0$,
%Thereby, when $ \frac{1}{2}\|\nabla\psi\|_{L^{\infty}(\Omega)}+C^* \max\{ \|p\|_{L^{\infty}(\Omega)},\|n\|_{L^{\infty}(\Omega)}\}< 1/(1+(C^*)^2)$ £¬
that is $\Phi_1\equiv\Phi_2\equiv0$, i.e., $\phi_1'\equiv\phi_1''$ and $\phi_2'\equiv\phi_2''$.
Finally, the solution existence and uniqueness of Poisson's equation, $-{\epsilon}\Delta \Phi_3=0$ in $ \Omega$ with $\Phi_3=0$ on $\partial\Omega$, lead to $\Phi_3\equiv 0$, i.e., $\phi_3'=\phi_3''$.

%For the third equation of \eqref{Q},
%\begin{equation}\label{3-1}
%\begin{cases}
%\begin{array}{rcl}
%& -\Delta \Phi_3=0,\ in\  \Omega     \\
%&\phi_3=0,\ on\ \partial\Omega
%\end{array}
%\end{cases}
%\end{equation}
%If $\phi_1'=\phi_1''\ ,\phi_2'=\phi_2''$ and set $(\phi_1,\phi_2)=(\phi_1',\phi_2')$, we get $\phi_3'=\phi_3''$ which is in accordance with the existence and uniqueness of solutions to Poisson equation.

Therefore, we prove $\bm{\phi}'\equiv\bm{\phi}''$ with
\begin{equation} \label{condunique}
	 \|\nabla\psi\|_{L^{\infty}(\Omega)}+\sqrt{2}C_p{\epsilon^{-1} }\max\{ \|p\|_{L^{\infty}(\Omega)},\|n\|_{L^{\infty}(\Omega)}\}< \sqrt{2}\left/\left(1+C_p^2\right)\right.\leq 1\left/\left(\sqrt{2}C_p\right)\right..
\end{equation}
%\end{proof}

\begin{remark}
	We mention that \eqref{condexist} and \eqref{condunique} here are sufficient conditions for solution existence and uniqueness, respectively. More rigorous analysis is needed for deriving the necessary conditions, which, however, is neglected in this work.
\end{remark}

\begin{acknowledgements}
T. Hao and X. Xu acknowledge the financial support from NSFC (No. {11671302}). M. Ma acknowledges the financial support from NSFC (No. 11701428), ``Chen Guang'' project supported by Shanghai Municipal Education Commission and Shanghai Education Development Foundation, and the Fundamental Research Funds for the Central Universities.
\end{acknowledgements}

%\appendix

% Authors must disclose all relationships or interests that 
% could have direct or potential influence or impart bias on 
% the work: 
%
 \section*{Conflict of interest}

 The authors declare that they have no conflict of interest.

\section*{Data availability statement}
The datasets generated during and/or analysed during the current study are available from the corresponding author on reasonable request.

% BibTeX users please use one of
%\bibliographystyle{spbasic}      % basic style, author-year citations
%\bibliographystyle{spmpsci}      % mathematics and physical sciences
%\bibliographystyle{spphys}       % APS-like style for physics
%\bibliography{}   % name your BibTeX data base

% Non-BibTeX users please use

\end{document}